\newtheorem{lemma}{Lemma}[section]
\newtheorem{prop}[lemma]{Proposition}
\newtheorem{thm}[lemma]{Theorem}
\newtheorem{cor}[lemma]{Corollary}
\newtheorem{defnthm}[lemma]{Definition/Theorem}
\newtheorem{lem}[lemma]{Lemma}
\newtheorem*{ThmIntro}{Theorem}
\newtheorem*{CorIntro}{Corollary}
\theoremstyle{definition}
\newtheorem{defn}[lemma]{Definition}
\newtheorem{ex}[lemma]{Example}
\newtheorem{rem}[lemma]{Remark}
\newtheorem*{remIntro}{Remark}
\newtheorem{notation}[lemma]{Notation}
\newtheorem{setup}[lemma]{Set-Up}
\newcommand{\categoryfont}{\mathsf}
\newcommand{\algebrafont}{\mathit}
\newcommand{\modules}[1]{\operatorname{\algebrafont{#1}\categoryfont{-mod}}}  
\newcommand{\End}{\mathrm{End}}
\newcommand{\Hom}{\mathrm{Hom}}
\newcommand{\Ext}{\mathrm{Ext}}
\newcommand{\MM}{\mathcal{MM}}
\newcommand{\MTM}{\mathsf{MTM}}
\newcommand{\Ah}{\mathcal{A}}
\newcommand{\Bh}{\mathcal{B}}
\newcommand{\Ch}{\mathcal{C}}
\newcommand{\Sh}{\cs}
\newcommand{\isom}{\cong}
\newcommand{\ohne}{\smallsetminus}
\newcommand{\tensor}{\otimes}
\newcommand{\im}{\mathrm{Im}}
\newcommand{\rk}{\mathrm{rk}}
\newcommand{\dR}{\mathrm{dR}}
\newcommand{\DM}{\mathsf{DM}}
\newcommand{\gm}{\mathrm{gm}}
\newcommand{\DMgm}{\DM_{\gm}}
\newcommand{\onemot}{1\mathrm{-Mot}_\Qbar}
\newcommand{\sing}{\mathrm{sing}}
\newcommand{\Per}{\mathcal{P}} 
\newcommand{\Pertilde}{\widetilde{\Per}}
\newcommand{\Vsing}[1]{V_\sing(#1)}
\newcommand{\hsing}{H_\sing}
\newcommand{\hdR}{H_\dR}
\newcommand{\tate}{\mathrm{Ta}}
\newcommand{\alg}{\mathrm{alg}}
\newcommand{\inc}{\mathrm{inc2}}
\newcommand{\mix}{\mathrm{inc3}}
\newcommand{\sat}{\mathrm{sat}}
\newcommand{\HW}{\mathrm{HW}}
\newcommand{\mot}{\mathrm{mot}}
\newcommand{\MZV}{\mathsf{MZV}}
\newcommand{\even}{\mathrm{ev}}
\newcommand{\odd}{\mathrm{odd}}
\newcommand{\Q}{\mathbb{Q}}
\newcommand{\Qbar}{{\overline{\Q}}}
\newcommand{\Z}{\mathbb{Z}}
\newcommand{\C}{\mathbb{C}}
\newcommand{\G}{\mathbb{G}}
\newcommand{\Oh}{\mathcal{O}}
\newcommand{\Ih}{\ci}
\newcommand{\rr}{\mathfrak{r}}
\renewcommand{\ss}{\mathit{ss}}
\newcommand{\Gm}{\G_m}
\renewcommand{\ul}[1]{\underline{#1}}
\newcommand{\pfad}{\rightsquigarrow}
\newcommand{\tr}{\mathrm{tr}}
\newcommand{\gammacirc}{\overset{\circ}{\gamma}}
\newcommand{\opname}[1]{\operatorname{\mathsf{#1}}}
\newcommand{\op}{^{op}}
\newcommand{\gldim}{\opname{gldim}\nolimits}
\newcommand{\rad}{\opname{rad}\nolimits}
\newcommand{\Char}{\mathrm{char}}
\newcommand{\N}{{\mathbb N}}
\newcommand{\cc}{{\mathcal C}}
\newcommand{\ci}{{\mathcal I}}
\newcommand{\cs}{{\mathcal S}}
\newcommand{\csmult}{\cs^{(\mathrm{mult})}}
\newcommand{\Tor}{\opname{Tor}}
\begin{document}

\title{Dimension formulas for period spaces via motives and species}

\author{Annette Huber} 
\email{\texorpdfstring{annette.huber@math.uni-freiburg.de}{}}

\author{Martin Kalck}
\email{\texorpdfstring{martin.kalck@uni-graz.at}{}}
\date{\today}
\begin{abstract}
We apply the structure theory of finite dimensional algebras to deduce dimension formulas for spaces of period numbers, i.e., complex numbers defined by integrals of algebraic nature. We get a complete and conceptually clear answer in the case of $1$-periods, generalising classical results like Baker's theorem on the logarithms of algebraic numbers and completing partial results in Huber--W{\"u}stholz \cite{huber-wuestholz}. 

The application to the case of Mixed Tate Motives over $\Z$ (i.e., Multiple Zeta Values) recovers the dimension estimates of Deligne--Goncharov \cite{deligne-goncharov} for the space of multiple zeta values of a given weight.

\end{abstract}

\maketitle

\tableofcontents
\section*{Introduction}

Periods are complex numbers of the form
\[ \int_G\omega\]
where both $\omega$ and $G$ are of algebraic and even number theoretic nature. Examples include $\log(\alpha)$ for $\alpha\in\Qbar^*$ or the periods of elliptic curves defined over $\Qbar$. They are a classical object of study in transcendence theory. Periods are known to have a more conceptual interpretation in terms of motives, see Section~\ref{sec:periods} for more details. Our main result is a completely general upper bound for the $\Qbar$-dimension of the space $\Per\langle M\rangle$ of periods of a mixed motive $M$ over $\Qbar$. 

The result is of particular interest in the case of $1$-motives (corresponding to period integrals where $\omega$ is a $1$-form), complementing the dimension formulas in \cite[Part Four]{huber-wuestholz}. The main new insight is the conceptual description of the mysterious ``error term'' as the dimension of a space of
Yoneda $2$-extensions.

The precise statement needs some notation. Let $F$ be an algebraic field extension of $\Q$. Given a motive $M$ over $F$ let $\langle M\rangle $ be the full abelian subcategory generated by $M$, closed under subquotients inside the abelian category of all  motives.  These categories are $\Q$-linear, but their period spaces have natural $F$-vector space structures. Let $M_1,\dots,M_n$ be representatives of the isomorphism classes of the simple objects in $\langle M\rangle$, i.e., the simple subquotients of $M$. Let
$D_i=\End(M_i)$ (a finite dimensional divison algebra over $\Q$), $d_i=\dim_\Q D_i$, $m_i=\dim_{D_i}H_\sing(M_i)$
and $\Sh_M$ the species of $\langle M\rangle$, see Notation~\ref{not:mult}.

\begin{ThmIntro}\label{thm:intro}
In the above notation,  
\begin{multline}\label{eq:intro}
\dim_F\Per\langle M\rangle\leq \sum_{i=1}^n m^2_id_i + \sum_{i,j=1}^n\sum_{\gamma:i\pfad j}m_im_j\dim_\Q E(\gamma)
\\
-\sum_{i,j=1}^nm_im_j\sum_{k=2}^\infty \dim_\Q  \Ext^k_{\langle M\rangle}(M_i,M_j)
\end{multline}
(where the sum is taken over all paths $\gamma$ of positive length in the species $\cs_M$)
and
\[ \dim_\Q E(\gamma)=\frac{\prod\limits_{\text{$\varepsilon:v\to w$ on $\gamma$}}\dim_\Q \Ext^1_{\langle M\rangle}(M_w,M_v)}{\prod\limits_{v\in\gammacirc} d_v}\]
(where $\gammacirc$ is the set of vertices on the path which are different from the end points).

If the Period Conjecture holds for $M$, then we have equality in the formula \eqref{eq:intro} in the two cases
\begin{itemize}
\item $\langle M\rangle$ is hereditary, i.e., all $\Ext^k_{\langle M\rangle}$ vanish for $k\geq 2$; or
\item the maximal length of a path in the quiver of the species of $\langle M\rangle$ is $2$, in particular $\Ext^k_{\langle M\rangle}$ vanishes for $k\geq 3$. 
\end{itemize}
\end{ThmIntro}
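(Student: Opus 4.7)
The plan is to proceed in three stages: (i) reduce the transcendental upper bound on $\dim_\Qbar \Per\langle M\rangle$ to a dimension computation for a combinatorial/algebraic invariant of the abelian category $\langle M\rangle$; (ii) use the structure theory of finite-dimensional algebras to express $\langle M\rangle$ via its species $\cs_M$; and (iii) perform the dimension count, accounting for relations via higher $\Ext$.

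First I would invoke the previous sections of the paper to obtain the unconditional inequality
\[
\dim_\Qbar \Per\langle M\rangle \;\leq\; \dim_\Q \mathcal{C}_M,
\]
where $\mathcal{C}_M$ is the Nori/Tannakian coalgebra of $\langle M\rangle$ associated with the fiber functor $H_\sing$. This comes from the period torsor being a torsor under $G_\mot(M)$ together with the existence of a $\C$-point given by the comparison isomorphism. Under the Period Conjecture this inequality is an equality. Therefore the proof of \eqref{eq:intro} reduces to computing $\dim_\Q \mathcal{C}_M$ and, in the two stated cases, to verifying that the bound is sharp.

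Next I would realise $\langle M\rangle$ as the module category of a basic finite-dimensional $\Q$-algebra $A$ (or an appropriate pro-algebra). The structure theory of such algebras gives a presentation
\[
A \;=\; T_S(V)/I, \qquad S = \prod_i D_i, \qquad V = \bigoplus_{i,j} \Ext^1_{\langle M\rangle}(M_i,M_j)^\vee,
\]
where the ideal $I$ of relations is controlled by $\Ext^k_{\langle M\rangle}$ for $k\geq 2$ via a minimal projective resolution of $A/\rad A$. The fiber functor $H_\sing$ makes each simple $M_i$ into a $D_i$-module of rank $m_i$. Unwinding the coend defining $\mathcal{C}_M$, one sees that $\mathcal{C}_M$ decomposes along paths in the species: the trivial path at $i$ contributes $H_\sing(M_i)^\vee \otimes_{D_i} H_\sing(M_i)$, of $\Q$-dimension $m_i^2 d_i$, and a path $\gamma$ of positive length from $i$ to $j$ contributes a $\Q$-subspace of dimension $m_im_j\dim_\Q E(\gamma)$. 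Summing yields exactly the first two terms of \eqref{eq:intro}.

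The main obstacle is the third step: showing that the relations in $I$ cut down the dimension by precisely $\sum_{i,j} m_im_j \dim_\Q \Ext^k_{\langle M\rangle}(M_i,M_j)$ for each $k\geq 2$, with the stated sign. The plan here is to analyse a spectral sequence (or equivalently the minimal projective resolution of the simples) whose $E_2$ page has the path-algebra contributions as positive terms and the $\Ext^{\geq 2}$ contributions as negative corrections, converging to $\dim_\Q \mathcal{C}_M$; in general, non-trivial higher differentials produce the strict inequality in \eqref{eq:intro}. In the hereditary case all terms with $k\geq 2$ vanish, so the bound is tight and the Period Conjecture supplies the matching lower bound, giving the first equality. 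In the case where the maximal path length in the quiver of $\cs_M$ is $2$, only $\Ext^2$ can intervene, and the relations it produces are necessarily independent (they cannot be further syzygised by $\Ext^{\geq 3} = 0$); so again the bound is sharp and, combined with the Period Conjecture, yields the second equality case.
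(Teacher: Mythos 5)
Your stages (i) and (ii) track the paper closely: the reduction to the dimension of the Nori algebra $A(M) = \End(H_\sing|_{\langle M\rangle})$ (you work with its dual coalgebra, which is equivalent), Morita reduction to a basic algebra, the presentation $B \cong k\Sh_M/\Ih$ coming from Berg's theorem (Proposition~\ref{P:SpeciesWithRel}), the fact that $\Ih$ is generated by $D\Ext^2$ (Theorem~\ref{thm:generate_ideal}), and the path decomposition giving the first two sums via Proposition~\ref{L:dimPathAlg} and the multiplicity bookkeeping of Lemma~\ref{lem:mult}. All of this is correctly anticipated.

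Stage (iii) is where your argument has a genuine gap. You propose a ``spectral sequence (or equivalently the minimal projective resolution of the simples) whose $E_2$ page has the path-algebra contributions as positive terms and the $\Ext^{\geq 2}$ contributions as negative corrections, converging to $\dim_\Q\mathcal{C}_M$,'' with ``non-trivial higher differentials'' accounting for strictness. No such object appears in the paper, and it is unclear what precise spectral sequence you have in mind; in particular, the source of the possible slack in~\eqref{eq:intro} is not killed cohomology classes but overcounting in a telescoping inequality of ideal dimensions. The paper's actual engine is Theorem~\ref{T:Bongartz}: explicit formulas
\[
D\Ext^{2l}_B(K,K)\isom (\ci^l\cap \rr_H\ci^{l-1}\rr_H)/(\ci^l\rr_H+\rr_H\ci^l),\qquad
D\Ext^{2l+1}_B(K,K)\isom (\rr_H\ci^l\cap\ci^l\rr_H)/(\ci^{l+1}+\rr_H\ci^l\rr_H),
\]
obtained from the (highly non-minimal) bimodule resolution of Lemma~\ref{L:ProjR}. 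Corollary~\ref{C:IE} then proves
$\dim_k B \leq \dim_k k\Sh_B - \sum_{l\geq 2}\dim_k\Ext^l_B(K,K)$
by a direct telescoping/inclusion--exclusion argument on these submodules, whose correction terms are each $\leq 0$ because $X+Y \supseteq X\cap Y$. This is elementary ideal arithmetic, not a spectral sequence, and it gives no handle on ``higher differentials''; Example~\ref{Ex:dimEstimateLongerPaths} shows the inequality can be arbitrarily far from sharp precisely because those correction terms are hard to control.

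Your treatment of the second equality case is also off. You argue the $\Ext^2$-relations ``cannot be further syzygised by $\Ext^{\geq 3}=0$,'' but that is not the mechanism. Since maximal path length $2$ means $\rr_H^3=0$, and $\ci\subset\rr_H^2$, one gets $\ci\rr_H = 0 = \rr_H\ci$ immediately; Corollary~\ref{C:LongRelations} then gives the exact isomorphism $\ci\isom D\Ext^2_B(K,K)$, and the formula in Section~\ref{ssec:refined} follows. The vanishing of $\Ext^{\geq 3}$ is a by-product (indeed it drops out of the Bongartz formulas), not an input about independence of syzygies. To repair your proposal you should replace the spectral-sequence heuristic with the Bongartz-type computation of $\Tor^B_n(K,K)$ and the explicit ideal-dimension estimate.
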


For $1$-motives, the maximal length of paths in the quiver of the species is $2$ and the \emph{Period Conjecture} is known by \cite{huber-wuestholz}. In particular, the Theorem above has the following consequence.

\begin{CorIntro}
Let $M$ be a $1$-motive over $F=\Qbar$. Then we have equality in the formula \eqref{eq:intro}.

\end{CorIntro}
The terms in the formula have an explicit interpretation in terms of the $1$-motive, see Theorem~\ref{thm:main1}.

\begin{remIntro}For general motives $M$, the estimate \eqref{eq:intro} is not sharp, see Example~\ref{Ex:dimEstimateLongerPaths} and Remark \ref{R:MixedTateArbitrary}.
\end{remIntro}


Let us explain how these dimension formulas tie in with the Period Conjecture. It makes a qualitative prediction about all relations between period numbers. In the approach of Kontsevich (see \cite{kontsevich})
and Nori (unpublished, but see \cite{period-buch} and \cite{huber_galois}) all $F$-linear relations are expected to be induced by functoriality of motives. This approach fits well with the classical results in transcendence theory obtained e.g. by Baker, Siegel and Wüstholz. Indeed, the Period Conjecture for $1$-motives is deduced from Wüstholz's Analytic Subgroup Theorem in
\cite{huber-wuestholz}. The qualitative statement can be translated into a quantitative one. We have unconditionally
\begin{equation}\label{eq:intro2} \dim_F\Per\langle M\rangle\leq \dim_\Q \End(H_\sing|_{\langle M\rangle}).\end{equation}
The Period Conjecture holds for $\Per\langle M\rangle$ if and only if we have equality in \eqref{eq:intro2}.

In general the version of the Period Conjecture is wide open, but we get explicit upper bounds. In particular, we recover the estimates of Terasoma, Deligne--Goncharov and Brown in the case of Mixed Tate Motives and their periods, e.g. Multiple Zeta Values, see Section~\ref{ssec:MTMZ}. Indeed, the category of all Mixed Tate Motives over $\Oh_F[S^{-1}]$ (where $F$ is a number field and $S$ a finite set of places) is hereditary. If a Mixed Tate Motive
$M$ is such that $\langle M\rangle$ is extension closed, the subcategory is hereditary as well and by the above Theorem we have equality in \eqref{eq:intro} if and only if the period conjecture holds for $M$.

\subsection*{Structure theory of finite dimensional algebras}
Summing up, the dimension of the space of periods is conjecturally equal to the $\Q$-dimension of the finite dimensional associative $\Q$-algebra that Nori attaches to a motive $M$. This is where the structure theory of such algebras comes in. A simple example of a finite dimensional algebra is the path algebra of a finite quiver (directed graph) without oriented cycles. Over algebraically closed fields, these are precisely the finite dimensional, basic, hereditary algebras. However, in our application there are three possible complications (not apparent in the best studied case of Mixed Tate Motives):
\begin{itemize}
\item The algebra is not hereditary.
\item The algebra is not basic, so that multiplicities have to be taken into account.
\item The ground field is $\Q$---so not algebraically closed. In this case, we have to consider path algebras attached not only to quivers but to \emph{species} as introduced by Gabriel, see \cite{Gabriel2}.
\end{itemize}
It is well-known that every basic finite dimensional algebra $B$ over a perfect field $k$
is the quotient of a (hereditary) path algebra $k\cs$ of a species by an admissible ideal $I$, cf. e.g. 
\cite[Section 4.1.]{Benson}, \cite[Section 8]{DrozdKiricenko}, \cite{Berg}.
\[ B \cong k\cs/I. \]
Building on work of Bongartz \cite{Bongartz}, we describe the two-sided ideal $I$ in terms of $\Ext$-groups, see Theorem~\ref{T:Bongartz} and Theorem~\ref{thm:generate_ideal}, cf. also \cite{ButlerKing}. 

This allows us to deduce upper bounds for the $k$-dimension of $A$ from data of
$\modules{A}$. They are sharp if the maximal length of a path in the quiver of $A$ is at most $2$. This covers the case of $1$-motives. 

A second application is the construction of a \emph{hereditary closure} of
a $k$-linear abelian category inside a hereditary category. Conjecturally the category of mixed motives over $\Qbar$ is in fact hereditary. By replacing a motive by its hereditary closure (or saturation), we can reduce all considerations of relations between periods to the hereditary case.

\subsection*{Structure of the paper}
The first half of the paper treats abstract algebra.
The first chapter reviews terminology and facts from the theory of finite dimensional algebras. It is aimed at readers with a background outside of representation theory.

Section~\ref{sec:ext} is devoted to the generalisation of Bongartz's description of the defining ideal of a basic finite dimensional algebra inside the path algebra of its species to the case of perfect ground fields.

These insights are turned into dimension formulas in Section~\ref{sec:dim}. The hereditary closure is constructed in Section~\ref{sec3}.

In the second half of the paper, we turn to periods and explain the implications of the abstract results to motives in general in Section~\ref{sec:periods}, to $1$-motives in Section~\ref{sec:1-mot} and to Mixed Tate Motives in Section~\ref{sec:MTM}.

\subsection*{Acknowledgements}
We thank Wolfgang Soergel for discussions and comments and Javier Fresan for his many detailed comments and corrections.

We are particularly indebted to Francis Brown. This project started after a discussion of the first author with him at the Simons Symposium in 2022. He pointed out that his algorithm for  computing linear relations in \cite{brown-phys}
 works not only for Mixed Tate Motives, but in fact for any hereditary category of motives and thus the crucial role of this property. He also stressed that the radical (coradical in Tannakian language) should be more important than the weight filtration. The reader will find these two themes throughout our paper. We refer to Remark~\ref{rem:brown} for more on the connection.

The hereditary case of our formulas was treated by Hörmann in \cite{hoermann-notiz}, who also made the connection between the ``saturated case'' appearing of  \cite{huber-wuestholz} and the hereditarity condition. 

We are grateful to Julian K{\"u}lshammer for detailed comments and
thank Henning Krause and Sondre Kvamme for their interest and for comments on an earlier version of this text.

M.K. was partially funded by the Deutsche Forschungsgemeinschaft (DFG,
German Research Foundation) – Projektnummer 496500943. He takes no credit for the results about the `hereditary closure'.

\section{Background on species and Morita-theory}

In this chapter, we collect well-known and maybe not so well-known facts about finite dimensional algebras. These facts will be translated into dimension formulas in Section~\ref{sec:dim}.

\subsection{Notations and conventions}\label{sec:notation}
Throughout this paper, we assume that $k$ is a perfect field. 

We will study \emph{finite dimensional $k$-algebras}, which we always assume to be associative and unital. Given such an algebra $A$, we denote its \emph{radical} by  $\rad(A)$ and let $K=A/\rad(A)$ be the semi-simple quotient. For an algebra $A$, unless specified otherwise we always consider finitely generated \emph{left} $A$-modules and denote the corresponding category by $\modules{A}$. We sometimes write ${}_AX$ to emphasise the left-action on a left-module $X$ and, correspondingly, $Y_A$ for the right-action on a right module $Y$.

 We use the notation $D(-)=\Hom_k(-, k)$. It is a functor
$\modules{A}\to\modules{A\op}$.

If $R$ and $S$ are $k$-algebras (not necessarily finite dimensional over $k$) an $R-S$-bimodule $M$ is an $R$-left module and $S$-right module such that the actions satisfy $(rm)s=r(ms)$ for all $m\in M, r\in R, s\in S$.

Let $\cc$ be an abelian category and let $M$ be an object in $\cc$. We denote the full subcategory of $\cc$ that contains all subquotients of $M^{\oplus n}$ for all $n \in \N$ by $\langle M \rangle$.

A \emph{quiver} is a directed graph $Q=(V,E,s,t)$ with sets of \emph{vertices} 
$V$, \emph{edges} $E$ and each edge $\varepsilon\in E$ is assigned an initial vertex $s(\varepsilon)$ and a terminal vertex $t(\varepsilon)$. It is called \emph{finite} if it has only finitely many vertices and edges. Most quivers in this paper will be finite. A priori, self-edges (loops) and multiple edges are allowed. 

A \emph{path} $\gamma=\varepsilon_1\cdots \varepsilon_n:v\pfad w$ is 
a sequence $\varepsilon_1,\dots,\varepsilon_n$ of edges such that $t(\varepsilon_1)=w$, $s(\varepsilon_n)=v$ and
$s(\varepsilon_i)=t(\varepsilon_{i+1})$:
\[ \gamma: v\xrightarrow{\varepsilon_n} v_n\xrightarrow{\varepsilon_{n-1}} v_{n-1}\xrightarrow{\varepsilon_{n-2}}\cdots \xrightarrow{\varepsilon_1} w.\]
We allow empty paths $\gamma:v\pfad v$ of length $0$. Let $P(Q)$ be the set of paths in $Q$. Paths can be concatenated as follows: if $\gamma:v\pfad w$ and $\delta:w\pfad u$ are paths, then  $\delta\gamma:v\pfad u$ is the concatenation.
A quiver is called \emph{directed} if it contains no oriented cycles, i.e., there are no closed paths of positive length. 

Given a quiver $Q$, its \emph{path algebra} $kQ$ is defined as
\[ kQ=\bigoplus_{\gamma\in P(Q)} k\gamma\]
where the composition is defined by concatenation of paths. (Here the product of two paths that can not be concatenated is set to $0$.)

We identify the vertices of $Q$ with paths of lengths $0$ and edges with paths of length $1$. The path algebra if finite dimensional if and only if $Q$ is finite and directed. 

For the related notion of a species and its path algebra, we refer to Section~\ref{sec:species}.

\subsection{Duals}
Throughout this section, we fix a finite dimensional $k$-algebra $A$ over
our perfect field $k$. We put $K=A/\rad(A)$.
Recall that $D:\modules{A}\to \modules{A\op}$ is defined as the $k$-dual.

\begin{lem}\label{lem:Ext_Tor}
For all $K$-modules $X$, the trace $\tr:K\to k$ induces a natural isomorphism
\[ DX\isom \Hom_K(X,K)\]
of $K\op$-modules. Moreover,
$\Tor_{m}^A(K, K) \cong D\Ext^{m}_A(K, K)$ as $K-K$-bimodules.
\end{lem}
\begin{proof}
 The first claim is an instance of \cite[Theorem 3.1]{Rickard}. Indeed, 
finite dimensional skew fields over perfect fields are symmetric via the trace map,
see \cite[VIII, p.~375, Cor.]{bourbaki8}. The property extends to matrix algebras over such skew fields and by Wedderburn's Theorem to $K$.

In particular, 
 \[ D(K_K)\isom {}_KK,\quad D({}_KK)\isom K_K\]
as $K$-modules and $K\op$-modules, respectively.

We now claim that 
\begin{equation}\label{ext-tor} DY\otimes_AX\cong D\Hom_A(X,Y)\end{equation}
for all finitely generated $A$-modules. There is a natural map from the left
to the right:
\[ f\otimes x\mapsto ( g\mapsto f(g(x)).\]
Both sides are right exact in $X$ as $D$ is exact. By taking a free resolution
of $X$, we are reduced to the case $X=A$. In this case the map is the 
\[ \mathrm{id}: DY\to DY.\]
The isomorphism \eqref{ext-tor} passes to derived functors, so that we have
\[ \Tor_n^A(DY,X)\isom D\Ext^n_A(X,Y).\]
The isomorphism is natural in both arguments.
We evaluate in $X=Y={}_KK$. The claim
follows because $DK\cong K\op$. It is $K-K$-equivariant by naturality.
\end{proof}

\begin{lemma}\label{lem:compute_ext}
There is a canonical isomorphism of $K-K$-bimodules
\begin{equation}\label{can} \rad(A)/\rad(A)^2\isom D\Ext^1_A(K,K).\end{equation}
\end{lemma}
\begin{proof}
The long exact sequence attached to the short exact sequence
\begin{equation}\label{eq:rr} 0\to \rad(A)\to A\to K\to 0\end{equation}
and the functor $\Hom_A(\cdot,K)$ induce a natural isomorphism
\[ \Ext^1_A(K,K)\isom \Hom_A(\rad(A),K)\isom \Hom_K(\rad(A)/\rad(A)^2,K)\]
because $\Hom_A(A,K)=\Hom_A(K,K)$ and $\Ext^1_A(A,K)=0$.
By Lemma~\ref{lem:Ext_Tor} we deduce
\[ \Ext^1_A(K,K)\isom D\rad(A)/\rad(A)^2.\]
Dually we get the claim.
\end{proof}

\subsection{Categories of modules}\label{ssec:modules}

The following result recalls 
different characterisations of the category of finitely generated modules $\modules{A}$ over a finite dimensional $k$-algebra $A$.

\begin{prop}\label{P:Deligne}
Let $\cc$ be a $k$-linear abelian category. The following conditions are equivalent:
\begin{itemize}
\item[(a)] $\cc \cong \modules{A}$, where $A$ is a finite dimensional $k$-algebra.\\
\item[(b)] All Hom-spaces in $\cc$ are finite dimensional over $k$ and $\cc$ has a projective generator $P$, i.e. every object is a quotient of $P^n$ for some $n \in \N$.\\
\item[(c)]  \begin{itemize}
\item[(i)] Every object in $\cc$ has finite length, i.e. all objects are both noetherian and artinian.
\item[(ii)] $\dim_k \Hom_\cc(X, Y) < \infty$ for all $X, Y \in \cc$.
\item[(iii)] We have $\langle G \rangle =\cc$ for some object $G \in \cc$.\\
\end{itemize}
\item[(d)]  
\begin{itemize}
\item[(i)] Every object in $\cc$ has finite length, i.e. all objects are both noetherian and artinian. 
\item[(ii)] There are only finitely many isomorphism classes of simple objects in $\cc$.
\item[(iii)] $\dim_k \Ext^1_\cc(S, T) < \infty$ for all simple objects $S, T \in \cc$. 
\item[(iv)] There exists an integer $l$ such that every object $X$ in $\cc$ has a filtration 
\[
0=X_0 \subset X_1 \subset X_2 \subset \cdots \subset X_l=X,
\]
such that all subfactors $X_i/X_{i-1}$ are semisimple.
\item[(v)] We have $\dim_k\End_\cc(S)<\infty$ for all objects $S \in \cc$, which are simple, projective and injective.  
\end{itemize}
\end{itemize}
\end{prop}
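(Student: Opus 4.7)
The plan is to establish $(a) \Leftrightarrow (b)$ via Morita theory, deduce $(a) \Rightarrow (c)$ and $(a) \Rightarrow (d)$ directly from the structure of $\modules{A}$ for finite-dimensional $A$, and close the equivalence via $(d) \Rightarrow (b)$, handling $(c) \Rightarrow (d)$ along the way.

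For $(a) \Leftrightarrow (b)$: if $\cc \cong \modules{A}$, then ${}_A A$ is a projective generator whose endomorphism ring $A^{op}$ is finite-dimensional, and all Hom spaces between finitely generated $A$-modules are finite-dimensional subspaces of the corresponding $\Hom_k$-spaces. Conversely, given $(b)$, the algebra $A := \End_\cc(P)^{op}$ is finite-dimensional by hypothesis, and the functor $\Hom_\cc(P,-) \colon \cc \to \modules{A}$ is an equivalence: it is exact and fully faithful on the additive closure of $P$ because $P$ is projective, and essentially surjective because $P$ is a generator. The implications $(a) \Rightarrow (c)$ and $(a) \Rightarrow (d)$ are then routine: ${}_A A$ plays the role of the generator $G$ in $(c)$; finite length is inherited from the finite-dimensionality of $A$; the simples correspond to the composition factors of $A/\rad(A)$ (finitely many); the Loewy-length bound $l$ equals the nilpotency index of $\rad(A)$; and $\Ext$ and $\End$ groups between finite-length modules are automatically finite-dimensional.

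For $(c) \Rightarrow (d)$: conditions $(i)$ and $(ii)$ follow immediately from the finite length of $G$ together with Jordan--H\"older, $(iv)$ holds with $l$ equal to the length of $G$, and $(v)$ is a special case of $(c)(ii)$. The finite-dimensionality of $\Ext^1(S,T)$ demanded by $(d)(iii)$ is subtle in the axiomatic setting; I would derive it only after establishing $(d) \Rightarrow (b) \Rightarrow (a)$, from which it becomes automatic. This reshuffles the logical order but still closes the cycle.

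The main obstacle is $(d) \Rightarrow (b)$, namely the construction of a projective generator from the axiomatic data alone. For each simple $S_i$ I would build a projective cover $P_i$ by a finite iteration: starting from $S_i$, form at each step a universal extension by a semisimple object whose multiplicities are dictated by the finite-dimensional $\Ext^1$-groups (using $(d)(iii)$), and terminate after at most $l$ steps (using $(d)(iv)$). The resulting $P_i$ has finite length, and condition $(d)(v)$ ensures finite-dimensionality in the ``semisimple block'' case where $S_i$ itself is already projective. The direct sum $P := \bigoplus_{i=1}^n P_i$ is then a projective generator, and $\dim_k \End_\cc(P) < \infty$ because each $P_i$ has length at most $l$ with composition factors among the finitely many simples. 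The delicate technical point — and the main hard step — is verifying that this inductive construction indeed produces a genuinely \emph{projective} object and that every object of $\cc$ admits a surjection from some $P^{\oplus m}$; both reduce to a careful application of $(d)(iv)$ to propagate liftings of morphisms up the Loewy filtration.
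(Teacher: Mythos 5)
Your plan agrees with the paper for $(a)\Leftrightarrow(b)$ (Morita theory) and $(a)\Rightarrow(c),(d)$, but for the hard implication out of $(d)$ you take a genuinely different route: you attempt a bare-hands construction of a projective generator by iterated universal extensions, whereas the paper invokes Gabriel's theory of pseudo-compact rings (\cite[7.2, 8.2]{Gabriel2}) together with the Hopkins--Levitzki theorem to produce a projective generator $P$ with $\End_\cc(P)\cong A$ artinian, and only then verifies that $A$ is finite dimensional over $k$. The universal-extension approach is attractive because it is more self-contained, but as written it has two gaps.

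The first gap is the one the paper handles with its only genuinely original step in this proof. After producing $P=\bigoplus_i P_i$ with each $P_i$ of length at most $l$, you assert $\dim_k\End_\cc(P)<\infty$ ``because each $P_i$ has length at most $l$ with composition factors among the finitely many simples''. This inference is false in general: a finite-length object with finite-dimensional $\Ext^1$'s between its composition factors can still have infinite-dimensional $\End$ if some $\End_\cc(S_i)$ is infinite dimensional over $k$. Condition $(d)(v)$ gives you this only for simples that are simultaneously simple, projective and injective. For the remaining simples you must argue, as the paper does, that $S$ fails to be projective or injective, hence some $\Ext^1_\cc(S,T)$ or $\Ext^1_\cc(T,S)$ is a nonzero module over the $k$-skew-field $\End_\cc(S)$; then $(d)(iii)$ forces $\dim_k\End_\cc(S)\leq\dim_k\Ext^1_\cc(S,T)<\infty$. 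Without this the inductive control of $\dim_k\Ext^1(P^{(n)},T)$ that your construction needs at each step also fails, because already the base case $\Hom(S,S)=\End(S)$ is not under control. So the $\End(S)$ bound is not a side remark: it is load-bearing both for termination of your iteration and for the final conclusion.

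The second gap is logical. You propose to defer $(c)\Rightarrow(d)(iii)$ until after $(d)\Rightarrow(b)\Rightarrow(a)$ has been established, claiming this ``still closes the cycle''. It does not: the only path you offer from $(c)$ to $(a)$ is through $(d)$, and your $(d)\Rightarrow(b)$ argument uses $(d)(iii)$ essentially (it supplies the finite multiplicities in your universal extensions). So you cannot obtain $(d)(iii)$ as a consequence of having already reached $(a)$ from $(c)$; that is circular. The paper sidesteps this entirely by proving $(c)\Rightarrow(a)$ directly via \cite[Cor.~2.17]{deligne-festschrift}, independently of condition $(d)$. If you want to avoid that citation, you need a direct argument that $(c)(ii)$ (finite-dimensional $\Hom$'s) and finite length imply finite-dimensional $\Ext^1$ between simples, and no such elementary argument is indicated in your plan. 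Finally, a smaller point: the claim that the universal-extension process terminates within $l$ steps ``using $(d)(iv)$'' requires showing each nontrivial universal extension strictly increases the Loewy length, which is true but deserves justification rather than assertion.
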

\begin{proof}
The equivalence between $(a)$ and $(b)$ is Morita theory, cf.~\cite[Thm. 2.11 \& Rem. 2.13]{Paquette}. The implication $(c) \Rightarrow (a)$ is \cite[Cor.~2.17]{deligne-festschrift}. For the implication $(a) \Rightarrow (c)$, we can take $G=A$, the free module. 

The implication from $(a)$ to $(d)$ is standard.
For the converse, we use \cite{Gabriel2}. More precisely, by
\cite[7.2]{Gabriel2}, the condition (i) implies that $\cc$ is the category of finite length modules over a pseudo-compact ring $A$. Conditions (ii) -- (iv) and
\cite[8.2]{Gabriel2} imply that $A$ is even Artinian. Hence, ${}_AA$ has finite length by the Hopkins--Levitzki Theorem and $\cc$ has a projective generator $P$ with $\End_\cc(P)\cong A$. It remains to show that $A$ is finite dimensional.  We claim that $\dim_k\End_\cc(S)<\infty$ for all simple objects $S$ in $\cc$. 
Then, by induction on the length, all $\Hom_\cc$ are finite dimensional, in particular, $\End_\cc(P)\isom A$ is finite dimensional over $k$.

We prove the claim. If $S$ is projective and injective this is (v), otherwise, without loss of generality, $ \Ext^1_\cc(S, T) \neq 0$ for a simple object $T$. Since this is a module over the $k$-skew-field $\End_\cc(S)$, we have  $\dim_k \End_\cc(S) < \dim_k \Ext^1_\cc(S, T)$. Now the claim follows from (iii).
\end{proof}

There does not seem to be a standard name for such categories in the literature. We introduce:
\begin{defn}\label{defn:str_finitary} 
A category satisfying the equivalent conditions of Proposition~\ref{P:Deligne} is called \emph{strongly finitary}. 
\end{defn}

\begin{cor}\label{cor:is_str_finitary}
Let $\cc$ be a $k$-linear abelian category such that $\Hom_\cc(X, Y)$ is finite dimensional for all $X, Y \in \cc$. Let $M \in \cc$ be an object of finite length. Then there is a finite dimensional algebra $A$ such that 
\[
\langle M \rangle \cong \modules{A}.
\]
\end{cor}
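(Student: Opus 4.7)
The plan is to reduce the statement to Proposition~\ref{P:Deligne} by showing that $\langle M\rangle$, regarded as a full subcategory of $\cc$, is strongly finitary in the sense of Definition~\ref{defn:str_finitary}. I would work from characterisation (c), since its three conditions are the easiest to verify from the hypotheses.

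First I would check that $\langle M\rangle$ is in fact an abelian subcategory of $\cc$. Recall that a subquotient of an object means a quotient of a subobject. Given $X,Y\in\langle M\rangle$, say $X$ is a subquotient of $M^{\oplus n}$ and $Y$ of $M^{\oplus m}$, then $X\oplus Y$ is a subquotient of $M^{\oplus(n+m)}$, so $\langle M\rangle$ is closed under finite direct sums. For any morphism $f\colon X\to Y$ in $\langle M\rangle$, its kernel computed in $\cc$ is a subobject of $X$ and its cokernel a quotient of $Y$; both operations preserve the property of being a subquotient of some $M^{\oplus n}$. Hence $\langle M\rangle$ is closed under kernels and cokernels, the inclusion into $\cc$ is exact, and $\langle M\rangle$ inherits the structure of an abelian category.

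Next I would verify the three conditions of Proposition~\ref{P:Deligne}~(c). For (i), since $M$ has finite length, $M^{\oplus n}$ has finite length equal to $n$ times the length of $M$, and every subquotient of a finite length object has finite length; hence every object in $\langle M\rangle$ is both noetherian and artinian. For (ii), the Hom-spaces in $\langle M\rangle$ coincide with those in $\cc$ and are thus finite dimensional by hypothesis. For (iii), taking $G=M$ gives $\langle G\rangle=\langle M\rangle$ tautologically. The implication (c)$\Rightarrow$(a) of Proposition~\ref{P:Deligne} then produces a finite dimensional $k$-algebra $A$ with $\langle M\rangle\isom\modules{A}$, as required.

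The only real subtlety is verifying that the full subcategory of subquotients of the $M^{\oplus n}$ is closed under the formation of kernels and cokernels in $\cc$; everything else is the routine observation that length, noetherianness, and artinianness are all inherited by subquotients of finite length objects.
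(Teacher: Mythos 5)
Your proof is correct and follows exactly the route the paper intends: the corollary is stated without proof precisely because it is a direct application of the implication (c)\,$\Rightarrow$\,(a) in Proposition~\ref{P:Deligne}, with $G=M$, after observing that $\langle M\rangle$ is abelian with exact inclusion and inherits finite length and finite-dimensional Hom-spaces. Your preliminary check that $\langle M\rangle$ is closed under kernels, cokernels, and finite direct sums is the right thing to verify and is carried out correctly.
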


\begin{rem}\label{R:relations}
Let $A$ be a $k$-algebra, not necessarily finite dimensional and let $V$ be a finite dimensional representation of $A$. 
In particular, there is a $k$-algebra homomorphism
\[
\varphi\colon A \to \End_k(V) 
\] 
with kernel a two-sided ideal $\Ih:=\ker \varphi$. If $\rho \in \Ih$, we say that $V$ satisfies the relation $\rho$.
Then every object in the abelian subcategory $\langle V \rangle$ generated by $V$ also satisfies all relations $\rho \in \Ih$.

Indeed, the subcategory of $\modules{A}$ satisfying relations $\rho \in \Ih$ is abelian, since it is equivalent to $\modules{A/\Ih}$. In particular, it is closed under subquotients showing
\[
\langle V \rangle \subset \modules{A/\Ih}.
\]
Since $V$ is finite dimensional, the category is strongly finitary by the implication from (c) to (a) of Proposition~\ref{P:Deligne}. 
\end{rem}

\begin{lem} \label{L:DimFromGen}
We keep the setup of Remark \ref{R:relations}. We have
 \begin{align}\label{Eq:Mor}
\langle V \rangle \cong \modules{A/\Ih}.
\end{align}
If $e$ and $e'$ are idempotents in $A$, then

\begin{equation} \label{E:ineq} 
\dim_k eV\cdot\dim_ke' V\geq e (A/\Ih)e'\end{equation}
The induced map
\begin{align}
e A e' \xrightarrow{\varphi}  e\End_k(V) e' \cong \Hom_k(eV, e'V) 
\end{align} 
is surjective if and only if we have 
 \begin{align}\label{E:eq}
\dim_k eV\cdot\dim_ke' V= \dim_k e (A/\Ih)e'
\end{align}
\end{lem}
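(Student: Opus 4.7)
The plan is to reduce everything to Remark~\ref{R:relations} together with the observation that $\varphi$ factors through an injective $k$-algebra homomorphism $\bar\varphi\colon A/\Ih \hookrightarrow \End_k(V)$.

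For \eqref{Eq:Mor}, Remark~\ref{R:relations} already gives the inclusion $\langle V \rangle \subset \modules{A/\Ih}$, so only the reverse inclusion is needed. I would argue as follows. Viewing $\End_k(V)$ as a left module over itself, it decomposes as a direct sum of $\dim_k V$ copies of $V$ (concretely, fix a $k$-basis of $V$ and split $\End_k(V)$ by columns). Restricting the action along $\bar\varphi$ turns this into a decomposition of left $A/\Ih$-modules, so $A/\Ih$, being a submodule of $\End_k(V)$, lies in $\langle V \rangle$. Since every finitely generated $A/\Ih$-module is a quotient of some $(A/\Ih)^n$, this yields $\modules{A/\Ih} \subset \langle V \rangle$.

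For \eqref{E:ineq} and the surjectivity criterion, I would apply the exact functor $X\mapsto eXe'$ (multiplication by idempotents on both sides) to the injection $\bar\varphi$. This produces an injection
\[
e(A/\Ih)e' \hookrightarrow e\End_k(V)e' \cong \Hom_k(e'V, eV),
\]
whose target has $k$-dimension $\dim_k eV \cdot \dim_k e'V$, establishing \eqref{E:ineq}. The map of the lemma factors as $eAe' \twoheadrightarrow e(A/\Ih)e' \hookrightarrow \Hom_k(e'V, eV)$, so its image coincides with $e(A/\Ih)e'$; surjectivity is therefore equivalent to the second arrow being an isomorphism, which by comparison of dimensions is exactly \eqref{E:eq}.

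There is no real obstacle. The only point deserving care is the identification $e\End_k(V)e' \cong \Hom_k(e'V, eV)$: an element $\phi = e\phi e'$ must vanish on $(1-e')V$ and take values in $eV$, and the resulting correspondence is manifestly $k$-linear and bijective.
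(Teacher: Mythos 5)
Your proof is correct and follows the same overall strategy as the paper. For the equivalence \eqref{Eq:Mor}, the paper simply cites a lemma from Curtis--Reiner to obtain an embedding $A/\Ih\hookrightarrow V^n$, whereas you prove that fact directly by decomposing $\End_k(V)\cong V^{\oplus\dim_k V}$ as a left module over itself and restricting along $\bar\varphi$; that is a clean, self-contained version of the same idea. For \eqref{E:ineq} and the surjectivity criterion, your argument (apply the exact $(e,e')$-truncation to the injection $\bar\varphi$) is identical to the paper's. One small point in your favour: you correctly identify $e\End_k(V)e'\cong\Hom_k(e'V,eV)$ (image in $eV$, kernel contains $(1-e')V$), whereas the paper writes $\Hom_k(eV,e'V)$; this is an inconsequential slip since only the dimension $\dim_k eV\cdot\dim_k e'V$ is used.
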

\begin{proof}
To see \eqref{Eq:Mor}, we note that by construction $V$ is a finite dimensional faithful module over the artinian algebra $A/\Ih$. It follows that there is $n\geq 1$ and an injective $A/\Ih$-module homomorphism
\[
A/\Ih\to V^n,
\]
see e.g. \cite[(3.30) Lemma]{curtis1981methods1}.
This shows that $\langle V \rangle \subset \modules{A/\Ih}$ contains a projective generator of  $\modules{A/\Ih}$. Hence we have the claimed equivalence.
Note that $A/\Ih$ is finite dimensional.

Multiplying the exact sequence 
\[
0 \to \Ih \hookrightarrow  A \xrightarrow{\varphi} \End_k(V) 
\] 
with the idempotens $e$ and $e'$ yields an exact sequence of subspaces
\[
0 \to e\Ih e' \hookrightarrow e Ae' \xrightarrow{\varphi}  e\End_k(V) e' \cong \Hom_k(eV, e'V) 
\]
and hence
\[ e(A/\Ih)e'\subset \Hom_k(eV,e'V).\]
This shows the inequality \eqref{E:ineq} and also that equality of dimensions holds iff $\varphi$ is surjective.
\end{proof}

We will later pick up on the following explicit example.

\begin{ex}\label{ex:dimension}
Let $Q$ be a finite quiver of the form 
 \begin{equation*}\begin{tikzpicture}[description/.style={fill=white,inner sep=2pt}]
   \matrix (m) [matrix of math nodes, row sep=2em,
                 column sep=3em, text height=1.5ex, text depth=0.25ex,
                 inner sep=2pt, nodes={inner xsep=0.3333em, inner
ysep=0.3333em}] at (0, 0)
    {   & 2    \\
       1&&  4 \\
       & 3  \\
    };
 \path[-{Classical TikZ Rightarrow[length=1.5mm]}] ($(m-2-1.east)+(0, 1mm)$)  edge node[xshift=-1.5mm, yshift=1.5mm, scale=0.85]{$a$}  ($(m-1-2.west)$) ;
 \path[-{Classical TikZ Rightarrow[length=1.5mm]}] ($(m-2-1.east) +(0, -1mm)$) edge  node[xshift=-1.5mm, yshift=-1.5mm, scale=0.85]{$c$} ($(m-3-2.west)$) ;

   \path[{Classical TikZ Rightarrow[length=1.5mm]}-] ($(m-2-3.west)+(0, +1mm)$)  edge node[xshift=1.5mm, yshift=1.5mm, scale=0.85]{$b$}  ($(m-1-2.east)$) ;
 \path[{Classical TikZ Rightarrow[length=1.5mm]}-] ($(m-2-3.west) +(0, -1mm)$) edge node[xshift=1.5mm, yshift=-1.5mm, scale=0.85]{$d$}  ($(m-3-2.east)$) ;

\draw[ -{Classical TikZ Rightarrow[length=1.5mm]}] ($(m-2-1.south) + (-1mm,0mm)$) .. controls +(2.5mm,-35mm) and
+(-2.5mm,-35mm) ..  node[ yshift=-2mm, scale=0.85]{$E_\tau$} ($(m-2-3.south) + (1mm,0mm)$);
\draw[ -{Classical TikZ Rightarrow[length=1.5mm]}] ($(m-2-1.south) + (1mm,0mm)$) .. controls +(2.5mm,-25mm) and
+(-2.5mm,-25mm) ..  node[ yshift=-3mm, scale=0.85]{$\vdots$} node[ yshift=2mm, scale=0.85]{$E_1$} ($(m-2-3.south) + (-1mm,0mm)$); 
  
 \end{tikzpicture}\end{equation*}
and let $A=kQ$ be its path algebra. It has dimension $10+\tau$ (corresponding to
$4$ vertices, $4+\tau$ edges, $2$ paths of length $2$). Let $e_i \in A$ be the idempotent at vertex $i$.
In particular,
\begin{align}\label{E:DimE4E1}
 \dim_k e_4 kQ e_1=2+\tau
 \end{align}
corresponding to the $(2+\tau)$ paths starting in $1$ and ending in $4$.

Let $V$ be the representation of $Q$ which attaches a $1$-dimensional vector space to each vertex and an isomorphism to the edges $a, b, c, d$ and arbitrary maps on the edges $E_i$. As before let
$\Ih=\ker(\varphi\colon kQ\to\End_k(V))$. By Lemma~\ref{L:DimFromGen}
\begin{equation}\label{eq:ex1.6} \dim_k e_4 (kQ/\Ih) e_1 \leq \dim_ke_4V\cdot \dim_k e_1V=1.
\end{equation}
The path $ba$ has non-zero-image under $\varphi$, hence $\varphi\colon e_4 kQ e_1 \to e_4 \End_k(V) e_1$ is surjective. This implies that we have even equality in
\eqref{eq:ex1.6}.
In combination with \eqref{E:DimE4E1} above, this implies $\dim_k e_4\Ih e_1 = 1+\tau$

Now assume that $\Ih \subset \rad(A)^2$. In other words, $\Ih \subset (ba, dc)= \rad(A)^2$ is at most two dimensional. 
 We claim that $\Ih=(ba-\lambda dc)$ for some $0 \neq \lambda \in k$. Indeed, by definition, $V$ satisfies a relation of the form $(ba-\lambda dc)$, so $\Ih\supseteq (ba-\lambda dc)$. If the inclusion was strict, we would have $\Ih = (ba, dc)= \rad(A)^2$ for dimension reasons, contradicting our assumption that all morphisms at edges $a, b, c, d$ are isomorphisms and hence cannot compose to zero.

Summing up, if $\Ih \subset
 \rad(A)^2$, we have $\Ih=(ba-\lambda dc)$ for some $0 \neq \lambda \in k$, $\tau=0$ and
\[ \langle V\rangle\isom \modules{kQ/(ba- \lambda dc)}.\]
\end{ex}

\subsection{Morita theory: from abelian categories to basic algebras}\label{ssec:morita}
The goal of this section is to show that for every finite dimensional algebra $A$ there is a finite dimensional \emph{basic} algebra $B$ with an equivalent module category. This will later be used to describe the relationship between the $k$-dimension of $A$ and $B$ explicitly.

We start by explaining what basic algebras are.

\begin{lem}\label{lem:idempotents}
Let $A$ be a finite dimensional $k$-algebra.
There is a complete set of primitive orthogonal idempotents $e_1, \dots,e_n\in A$ inducing an isomorphism of left $A$-modules
\begin{align}\label{E:decomp-into-indec-proj}
{}_AA \cong Ae_1 \oplus \cdots \oplus Ae_n.
\end{align}
In particular, each $Ae_i$ is indecomposable.
\end{lem}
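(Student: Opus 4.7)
The plan is to exploit the classical bijection between direct sum decompositions of the regular module ${}_AA$ and complete sets of orthogonal idempotents in $A$, and then invoke finite length to produce a decomposition into indecomposables.

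First I would recall the bijection. Any decomposition of left $A$-modules ${}_AA = P_1 \oplus \cdots \oplus P_n$ lets us write $1 = e_1 + \cdots + e_n$ with $e_i \in P_i$. The projections onto the summands $P_i$ are endomorphisms of ${}_AA$ as a left module, and since $\End_A({}_AA) \cong A^{\mathrm{op}}$ via right multiplication, the decomposition of $1$ yields elements satisfying $e_i e_j = \delta_{ij} e_i$, i.e.\ a complete system of orthogonal idempotents. One checks that $P_i = A e_i$, which already gives the displayed isomorphism \eqref{E:decomp-into-indec-proj}. Conversely any such complete orthogonal system produces a decomposition of ${}_AA$.

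Next I would produce the decomposition into indecomposable summands by induction on $k$-dimension. Since $A$ is finite dimensional, ${}_AA$ has finite length, so either it is already indecomposable (in which case take $n=1$ and $e_1=1$), or it splits nontrivially as ${}_AA = P \oplus Q$. Each summand is a left $A$-module of strictly smaller $k$-dimension, so by induction it decomposes into indecomposable left $A$-submodules; combining these decompositions and translating them back via the bijection above produces $e_1,\dots,e_n$ with $A e_i$ indecomposable.

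Finally, the primitivity of each $e_i$ is equivalent to the indecomposability of $A e_i$: any orthogonal decomposition $e_i = f + g$ into nonzero idempotents in $e_i A e_i$ would produce $A e_i = A f \oplus A g$ as left $A$-modules, contradicting indecomposability. Since all the ingredients (the correspondence between idempotent decompositions and module decompositions, and finite-length induction) are entirely standard, I do not expect any real obstacle; the only point requiring a little care is keeping track of the $\mathrm{op}$ appearing in $\End_A({}_AA) \cong A^{\mathrm{op}}$ so that the multiplicative relations $e_i e_j = \delta_{ij} e_i$ come out with the correct order.
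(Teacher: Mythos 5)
Your proof is correct and is essentially the standard argument; the paper itself does not write out a proof but simply cites \cite[Prop.\ I.4.8]{ARS}, and your reasoning (the bijection between direct-sum decompositions of ${}_AA$ and complete orthogonal idempotent systems, finite-length induction to reach indecomposable summands, and the equivalence of primitivity with indecomposability of $Ae_i$) is exactly what that reference carries out.
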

\begin{proof}
See e.g. \cite[Prop. I.4.8]{ARS}.
\end{proof}
Note that  each $Ae_i$ is projective. It is the \emph{projective cover} of the
simple quotient $S_i=Ae_i/\rad(A) Ae_i$.

\begin{defn}
A finite dimensional algebra $A$ is called \emph{basic} if the indecomposable modules $Ae_i$ in \eqref{E:decomp-into-indec-proj} are pairwise non-isomorphic.
\end{defn}
Equivalently, the simple quotients $S_i$ are pairwise non-isomorphic. This is the case if and only if $A/\rad(A)$ is a product of skew fields.

The following well-known example yields `natural' \emph{non} basic algebras.
\begin{ex}
Let $G$ be a finite group and let $k$ be an algebraically closed field such that $\Char(k) \nmid |G|$. 
Then the group algebra $k[G]$ is basic if and only if $G$ is abelian.

Indeed, by Maschke's theorem, the group algebra $k[G]$ is semi-simple. Since $k$ is algebraically closed, this yields an isomorphism
\begin{align}\label{E:Wedd}
k[G] \cong M_{n_1\times n_1}(k) \times \ldots \times M_{n_s \times n_s}(k)
\end{align} 
for some $n_i>0$. Now, $G$ is abelian if and only if the group algebra is commutative, which by \eqref{E:Wedd} holds if and only if all $n_i=1$. But this is equivalent to $k[G]$ being basic.

\end{ex}

We fix some notation. By \eqref{E:decomp-into-indec-proj}  there is an isomorphism
\begin{align}\label{E:decomp}
{}_AA \cong \bigoplus_{i=1}^n P_i^{\oplus m_i}
\end{align}
where the $P_i \cong Ae_i$ are the pairwise non-isomorphic indecomposable left $A$-modules and the $m_i$ are positive integers.
\begin{defn}\label{defn:mult}
We call $m_i$ the \emph{multiplicity} of $P_i$ in $A$.
\end{defn}
Let $
P = \bigoplus_{i=1}^n P_i  \, \text{ and } \, B=\End_A(P)\op.
$
The next result follows from Morita theory. We follow the approach in \cite[Section II.2]{ARS}.

\begin{lem}[{\cite[Corollary II.2.6]{ARS}}]\label{lem:Morita}
Using the notation introduced above, there is an exact $k$-linear equivalence 
\[
F:=\Hom_A(P, -) \colon \modules{A} \to \modules{B}. 
\] 
\end{lem}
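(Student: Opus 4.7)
The plan is to identify $P = \bigoplus_{i=1}^n P_i$ as a projective generator of $\modules{A}$ with finite-dimensional endomorphism algebra, and then invoke classical Morita theory (which already appeared in the proof of Proposition~\ref{P:Deligne}, (b) $\Rightarrow$ (a)).

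First I would check projectivity: each $P_i \cong Ae_i$ is a direct summand of the free module ${}_AA$ via the idempotent $e_i$ from Lemma~\ref{lem:idempotents}, hence projective, and therefore the finite direct sum $P$ is projective as well. Next I would show that $P$ is a generator. By \eqref{E:decomp}, the free module ${}_AA$ is a direct summand of $P^{\oplus m}$ with $m = \max_i m_i$. Since every object $X \in \modules{A}$ admits an epimorphism ${}_AA^{\oplus k} \twoheadrightarrow X$ for some $k$, composing with a splitting yields an epimorphism $P^{\oplus km} \twoheadrightarrow X$. Thus $P$ satisfies condition (b) of Proposition~\ref{P:Deligne}.

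With $P$ a projective generator, Morita theory then gives an equivalence $F = \Hom_A(P, -) \colon \modules{A} \to \modules{\End_A(P)\op} = \modules{B}$. The $k$-linearity of $F$ is automatic, and exactness follows from the projectivity of $P$ (which makes $\Hom_A(P, -)$ send short exact sequences to short exact sequences). A quasi-inverse is given by $-\otimes_B P$.

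The only step that is not essentially a formality is the verification that $F$ is fully faithful and essentially surjective, but this is precisely the content of the cited \cite[Corollary II.2.6]{ARS}. The core idea there is that $F(P) = B$ as a left $B$-module, so $F$ restricts to an equivalence between $\mathrm{add}(P)$ and $\mathrm{add}(B) = \mathrm{proj}(B)$; the statement for arbitrary modules follows by presenting every object of $\modules{A}$ (resp.\ $\modules{B}$) by two terms of $\mathrm{add}(P)$ (resp.\ $\mathrm{add}(B)$) and using the left exactness of $F$ together with the five lemma. Since the desired statement appears verbatim in \cite{ARS}, I would simply quote it rather than reprove the standard Morita argument.
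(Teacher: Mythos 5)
The paper gives no proof of this lemma beyond the citation to \cite[Corollary II.2.6]{ARS}, which is exactly what you do after verifying (correctly) that $P$ is a projective generator and $B = \End_A(P)\op$ is finite dimensional. Your proposal matches the paper's approach, and the supporting checks (each $P_i = Ae_i$ is a summand of ${}_AA$, hence projective; ${}_AA$ is a summand of $P^{\oplus m}$ so $P$ generates; exactness of $\Hom_A(P,-)$ from projectivity of $P$) are all sound.
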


\begin{cor}\label{cor:basic} Keeping the notation above, we have
\begin{enumerate}
\item \label{it:1}The indecomposable projective $B$-modules are given by $Q_i=F(P_i)$.
\item $B$ is a basic algebra.
\item $A \cong \End_B(\bigoplus_{i=1}^n Q_i^{\oplus m_i})\op$
\end{enumerate}
\end{cor}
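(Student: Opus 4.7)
The plan is to leverage the equivalence $F=\Hom_A(P,-)\colon \modules{A}\to\modules{B}$ from Lemma~\ref{lem:Morita} together with careful bookkeeping of multiplicities and opposite algebras. The key preliminary observation is that $F(P)\cong {}_BB$ as left $B$-modules: since $F(P)=\End_A(P)$ and $B=\End_A(P)\op$, the $B$-action on $F(P)$ coming from postcomposition is precisely the left regular action of $B$ on itself. Applying $F$ to the decomposition $P=\bigoplus_{i=1}^n P_i$ then gives ${}_BB\cong\bigoplus_{i=1}^n Q_i$.

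For (1), the $P_i$ are pairwise non-isomorphic indecomposable projectives by construction, so because $F$ is an equivalence the $Q_i$ share these properties. Since every indecomposable projective $B$-module is, by Krull--Schmidt, a summand of ${}_BB^n$ for some $n$, it must be isomorphic to one of the $Q_i$. Statement (2) is then immediate: the decomposition ${}_BB\cong\bigoplus_{i=1}^n Q_i$ exhibits $B$ as a direct sum of pairwise non-isomorphic indecomposable summands, which is precisely the definition of basic.

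For (3), I would apply $F$ to the decomposition \eqref{E:decomp}, namely ${}_AA\cong\bigoplus_{i=1}^n P_i^{\oplus m_i}$, obtaining $F({}_AA)\cong\bigoplus_{i=1}^n Q_i^{\oplus m_i}$ by additivity. Functoriality of $F$ then yields
\[ \End_A({}_AA)\cong\End_B\left(\bigoplus_{i=1}^n Q_i^{\oplus m_i}\right). \]
The standard identification $\End_A({}_AA)\cong A\op$ (right multiplication reverses composition order) then gives $A\cong\End_B\left(\bigoplus_{i=1}^n Q_i^{\oplus m_i}\right)\op$ upon taking opposites.

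The principal technical point, and the only place where one can easily slip up, is maintaining consistency with the opposite-algebra conventions: $B$ is already defined with an $\op$, and $\End$ of the left regular module produces another $\op$. Once these are tracked correctly, each of the three assertions reduces to a direct consequence of the Morita equivalence $F$ together with Krull--Schmidt.
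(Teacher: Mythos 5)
Your proof is correct and essentially the same as the paper's: both rely on the isomorphism ${}_BB \cong F(P) \cong \bigoplus_i Q_i$ and on applying $F$ and $\End(-)$ to the decomposition \eqref{E:decomp}, with the same bookkeeping of opposite algebras at the end. One small slip: the left $B$-action on $F(P)=\End_A(P)$ identifying it with ${}_BB$ is by \emph{pre}composition $b\cdot f = f\circ b$ (precisely because $B$ carries the $\op$), not postcomposition, though this does not affect the validity of the argument.
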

\begin{proof}
Since $F$ is an equivalence it sends indecomposable projective $A$-mo\-du\-les to indecomposable projective $B$-modules and every indecomposable projective $B$-module arises in this way. Now equation \eqref{E:decomp} implies assertion (\ref{it:1}). 
Since the $P_i$ are pairwise non-isomorphic by assumption and $F$ is an equivalence, the isomorphism 
\[
{}_BB \cong \Hom_A(P, P) \cong \bigoplus_{i=1}^n F(P_i)
\]
shows that $B$ is basic. The last assertion follows by taking opposite algebras in 
\[
A\op \cong \End_A({}_AA) \cong \End_B(F({}_AA)) \cong \End_B\left(\bigoplus_{i=1}^n Q_i^{\oplus m_i}\right).
\]
\end{proof}

\begin{ex}\label{ex:morita}
Let $A=M_{r \times r}(k)$ be the algebra of square matrices of size $r$, then the `space of column vectors' $M_{1 \times r}(k)$ is equal to the projective $P=P_1$ with multiplicity $m_1=r$. We have $B \cong k$.
\end{ex}

\subsection{Species and path algebras}\label{sec:species}
Corollary \ref{cor:basic} reduces the computation of the $k$-dimension of a finite dimensional algebra $A$ (with known `multiplicities' $m_i$) to understanding dimensions of Hom-spaces between indecomposable projective modules $Q_i$ over its basic algebra $B$. Our next aim is to review the structure theory of basic algebras in more detail.
The notion of a \emph{species} and its path algebra was introduced by
Gabriel.
\begin{defn}[{\cite{Gabriel2}, \cite[Section 2]{Berg}}]
Let $I$ be an index set. A \emph{$k$-species} $\cs=(D_i, _{j}E_i)_{i, j \in I}$ is a set of set of division rings  $D_i$ and $D_j-D_i$-bimodules $_{j}E_i$ such that
\begin{itemize}
\item The $D_i$ are finite dimensional $k$-algebras and $\lambda x = x \lambda$ for all $x \in D_i$ and all $\lambda \in k$.
\item The $_{j}E_i$ are finite dimensional
over $k$ and $\lambda x = x \lambda$ for all $x \in {_{j}E_i}$ and all $\lambda \in k$.
\end{itemize}

In the sequel, we often write \emph{species} for \emph{$k$-species} if the base field $k$ is clear from the context.

A species is called \emph{finite} if the index set $I$ is finite.
\end{defn}

Now we associate a $k$-algebra $k\cs$ with a species $\cs$.

\begin{defn} Let $\cs=(D_i, _{j}E_i)_{i, j \in  I}$ be a species. Set 
\begin{align}
D=\bigoplus_{i \in I} D_i \qquad E=\bigoplus_{i, j \in I} \ _{j}E_i 
\end{align}
Then $E$ is a $D\!-\!D$-bimodule and we can form the tensor ring
 \begin{align}
k\cs:= T_D(E)=D \oplus E \oplus (E \otimes_D E) \oplus  (E \otimes_D E \otimes_D E)  \oplus \cdots
\end{align}
Since $k$ acts centrally on $D$ and on $E$, the ring $k\cs$ is a $k$-algebra, which we call the \emph{path algebra} of $\cs$.

We denote by $\rr_{\cs}\subset k\cs$ the ideal generated by $E$.
\end{defn}

 By construction, the path algebra of a finite species is basic with set of simple objects $I$. It is hereditary  by \cite{ButlerKing}. For use in the non-basic case, we also consider the following generalisation (note that the algebras in \cite{ButlerKing} are also not assumed to be basic):

\begin{defn}\label{defn:species_with_mult}
Let $I$ be an index set. A \emph{$k$-species with multiplicities} $\cs=(K_i, _{j}E_i)_{i, j \in I}$ is a set of simple $k$-algebras $K_i$ and $K_j-K_i$-bimodules $_{j}E_i$ such that
\begin{itemize}
\item The $K_i$ are finite dimensional $k$-algebras and $\lambda x = x \lambda$ for all $x \in K_i$ and all $\lambda \in k$.
\item The $_{j}E_i$ are finite dimensional
over $k$ and $\lambda x = x \lambda$ for all $x \in {_{j}E_i}$ and all $\lambda \in k$.
\end{itemize}
The number $m_i$ such that $K_i\isom M_{m_i}(D_i)$ for a division algebra $D_i$ is called \emph{multiplicity} of $i$.
We set
\begin{align}
K=\bigoplus_{i \in I} K_i \qquad E=\bigoplus_{i, j \in I} \ _{j}E_i 
\end{align}
Then $E$ is a $K\!-\!K$-bimodule and we can form the tensor ring
 \begin{align}
k\cs:= T_K(E)=K \oplus E \oplus (E \otimes_K E) \oplus  (E \otimes_K E \otimes_K E)  \oplus \cdots
\end{align}
We call the \emph{path algebra} of $\cs$.

We denote by $\rr_{\cs}\subset k\cs$ the ideal generated by $E$.
\end{defn}

Unless otherwise specified, from now on all our species are assumed to be finite.
The path algebra is still hereditary by \cite{ButlerKing}.

\begin{defn}\label{defn:q_from_cs}
 Let $\cs$ be a species (with multiplicities). The quiver $Q_\cs$ of $\cs$ is given by
the set of vertices $I$ and an edge $e$ from $j$ to $i$ whenever
$_jE_i$ is non-zero. 

We say that $\cs$ has no oriented cycles, if $Q_\cs$  has no oriented cycles.

Given a path $\gamma=\varepsilon_1\cdots \varepsilon_n:v\pfad w$ in $Q_\cs$, we put
\[ 
E(\gamma)={}_vE_{t(\varepsilon_n)}\tensor_{K_{t(\varepsilon_n)}}{}_{s(\varepsilon_{n-1})}E_{t(\varepsilon_{n-1})}\tensor_{K_{t(\varepsilon_{n-1})}}\dots \tensor_{K_{t(\varepsilon_2)}} {}_{s(\varepsilon_1)}E_w
\]
for the tensor product of bi-modules along the path $\gamma$. If $\gamma:v\pfad v$ has length zero, we put $E(\gamma)=K_v$. Varying over all paths we put
\begin{equation}\label{Eq:Eij}
E(ij)= \bigoplus_{\gamma:i\pfad j}E(\gamma).
\end{equation}
\end{defn}
In this notation
\begin{equation}\label{eq:dim_kcs} k\cs=\bigoplus_{i,j\in V(Q_\cs)}E(ij).\end{equation}

\begin{rem}\label{R:NoCycles} If $\cs$ has no oriented cycles, then $k\cs$ is finite dimensional and $\rr_\cs=\rad(k\cs)$.
\end{rem}

\begin{ex} \label{ex:Q}
Let $Q$ be a finite quiver. It defines a species $\cs$ with $I=V(Q)$ the vertex set, $K_i=k$ for all vertices and $_jE_i= k^{{}_jd_i}$ where ${}_jd_i$ is the number of edges from $j$ to $i$ in $Q$. In particular, all multiplicities are $1$ in this case.
The quiver of 
$\cs$ has the same vertices as $Q$, but collapses multiple edges from $i$ to $j$ into a single edge. The path algebras agree:
\[ kQ=k\cs.\]
\end{ex}

\begin{defn}\label{defn:species_of_c}
Let $\Ch$ be a strongly finitary category, see Definition~\ref{defn:str_finitary}. It gives naturally rise to a species $\cs_\Ch$:
\begin{itemize}
\item Let  $I=\{1,\dots,n\}$ with $S_1,\dots,S_n$ the set of simple objects of $\Ch$,
\item for each simple object $S_i$ let $D_i=\End_{\Ch}(S_i)$,
\item for each pair $(i,j)$ let
\[ _{j}E_i=D\Ext^1_\Ch(S_j,S_i)\]
with the operations of $D_j$ and $D_i$ via the first and second argument, respectively.
\end{itemize}
In the case of $\Ch=\modules{A}$ we write $\cs_A$ for $\cs_{\modules{A}}$.
\end{defn}
This applies to our categories $\modules{A}$. In later chapters, we will also apply it to categories of motives.

\begin{rem}\label{rem:is_surj}
If $\Ch$ is strongly finitary and $\Ch'\subset\Ch$ is a full abelian subcategory closed under subquotients (hence again strongly finitary), then
the species and its path algebra are functorial. The map
\[ k\cs_\Ch\to k\cs_{\Ch'}\]
is surjective because $\Ext^1_{\Ch'}(S_j,S_i)\subset\Ext^1_{\Ch}(S_j,S_i)$ for all simple objects $S_i$ and $S_j$ of $\Ch'$.
\end{rem}

We also have a use for a variant with multiplicities.
\begin{defn}\label{defn:csmult_A}
Let $A$ be a finite dimensional $k$-algebra with species
$\cs_A=(D_i,{}_jE_i)$.  The \emph{species with multiplicities}
$\csmult_A$ has the same vertices as $\cs$, corresponding to the set of simple
$A$-modules $S_1,\dots,S_n$,
\begin{itemize}
\item for each simple $i$ let $K_i$ be the $S_i$-isotypical component of
$K=A/\rad(A)$,
\item for each pair $(i,j)$ let
\[ _{j}E_i=D\Ext^1_A(K_j,K_i)\]
with the operations of $K_j$ and $K_i$ via the first and second argument, respectively.
\end{itemize}
\end{defn}
Then $K_i\isom M_{m_i}(D_i)$ where $D_i=\End_A(S_i)$ and $m_i$ is the multiplicity in the sense of Definition~\ref{defn:mult}. We have
$K_i=S_i^{m_i}$ as an $A$-module and hence
\begin{equation}\label{eq:ext_mult}
 \Ext^1_A(K_j,K_i)\isom\Ext^1_A(S_j,S_i)^{m_jm_i}.\end{equation}

%

The following result is formulated and proved for basic algebras in \cite[Theorem~3.12]{Berg}.
\begin{prop}\label{P:SpeciesWithRel} 
Let $k$ be perfect and
$A$ a  finite dimensional $k$-algebra. Then there 
exists
a two-sided admissible ideal 
$\ci \subset k\csmult_A$ and a $k$-algebra isomorphism
\begin{align}\label{E:QuotientOfHereditary}
A \cong k\csmult_A/\ci.
\end{align}
\end{prop}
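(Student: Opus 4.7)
The plan is to construct an explicit surjective $k$-algebra homomorphism $\varphi\colon k\cs_B\to B$ with admissible kernel, where $k\cs_B=T_K(E)$ for $K=\prod_{i=1}^n D_i$ and $E=\bigoplus_{i,j}{}_jE_i$; write $J=\rad(B)$, so that $B/J\cong K$ because $B$ is basic. The first step is to lift $B\twoheadrightarrow B/J$ to an embedding $K\hookrightarrow B$. This is the Wedderburn--Malcev principal theorem, which applies precisely because $k$ is perfect: there is a semisimple $k$-subalgebra $K'\subset B$ with $B=K'\oplus J$ as $k$-vector spaces and $K'\xrightarrow{\sim}B/J$. Identifying $K'$ with $K$ endows $J$ with the structure of a $K$-$K$-bimodule.

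Next I would identify the $K$-$K$-bimodule $J/J^2$ with $E$. A standard computation with the minimal projective resolutions of the simples $S_j$ yields, for each pair $(i,j)$, a canonical bimodule isomorphism between the appropriate block of $J/J^2$ and ${}_jE_i$; the dualization $D$ in ${}_jE_i=D\Ext^1_B(S_j,S_i)$ is built in precisely to make the left- and right-actions match up. Summing over pairs gives $J/J^2\cong E$ as $K$-$K$-bimodules. Perfectness of $k$ implies that $K$ is separable over $k$ and that $K\tensor_k K\op$ is semisimple; the short exact sequence $0\to J^2\to J\to J/J^2\to 0$ of $K$-$K$-bimodules therefore splits, producing a $K$-bilinear section $s\colon E\cong J/J^2\hookrightarrow J$.

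By the universal property of the tensor algebra, the pair $(K\hookrightarrow B,\,s\colon E\to J)$ extends uniquely to a $k$-algebra homomorphism $\varphi\colon k\cs_B\to B$. The image of $\varphi$ contains $K$ and $s(E)$, hence equals $K+J+J^2+\cdots=B$ by nilpotence of $J$, so $\varphi$ is surjective. Setting $\ci=\ker\varphi$: the induced map $\rr_\cs/\rr_\cs^2\to J/J^2$ equals $s$ modulo $J^2$ and is therefore an isomorphism, which gives $\ci\subset\rr_\cs^2$; and $\varphi(\rr_\cs^n)\subset J^n=0$ for large $n$ forces $\rr_\cs^n\subset\ci$. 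Hence $\ci$ is admissible and $B\cong k\cs_B/\ci$.

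The main obstacle is the Wedderburn--Malcev splitting together with the related $K$-$K$-bilinear section $s$: both require perfectness of $k$ (equivalently, separability of each $D_i$ and hence of $K$), and neither is available in general when $k$ fails to be perfect. A secondary subtlety is keeping the bimodule conventions of Definition~\ref{defn:species_of_c} and the dualization $D$ aligned with those on $J/J^2$, so that the identification $E\cong J/J^2$ is genuinely an isomorphism of $K$-$K$-bimodules and not merely an equality of $k$-dimensions block by block.
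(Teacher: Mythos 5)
Your proposal follows the same outline as the paper's (and Berg's) argument: a Wedderburn--Malcev section $K\hookrightarrow B$, identification $\rad(B)/\rad(B)^2\cong D\Ext^1_B(K,K)$ and a $K$-$K$-bimodule splitting (both available because $k$ perfect makes $K\otimes_k K\op$ semisimple), the universal property of the tensor algebra to get a surjection $k\cs_B\to B$, and the isomorphism on $\rr/\rr^2$ together with nilpotence of $\rad(B)$ to see the kernel is admissible. This is exactly the route the paper takes (spelled out in its Appendix proof of Proposition~\ref{prop:choice}, Steps 1--3), so the approach matches and the argument is correct.
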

An ideal $\ci$ is called \emph{admissible} if it is contained in $\rr_{\csmult_A}^2$ and there is $n\geq 2$ such that $\rr_{\csmult_A}^n\subset \ci$.

\begin{proof}
Recall that $K=\bigoplus_i K_i\isom A/\rad(A)$. By Lemma~\ref{lem:compute_ext},
\[ E=\bigoplus_{i,j}{}_jE_i\isom \rad(A)/\rad(A)^2.\]
We  now show the existence of a surjective map of $k$-algebras
\[ T_K E\to A.\]
We follow \cite{Berg} where the basic case is handled.

\textit{1. Step:} using the Theorem of Wedderburn and Malcev \cite[Theorem~11.6]{pierce},
we show that there exists a $K$-algebra splitting $K\to A$, cf.~\cite[Prop. 3.10]{Berg}.
To show that this theorem is applicable,
we need to verify the following statements (for $R=k$ and ${\bf J}(A)=\rad(A)$):
\begin{enumerate}
\item[(a)] $\dim A/\rad(A)\leq 1$ (here $\dim$ is the homological dimension of $A$ in Hochschild cohomology over $k$). 
\item[(b)] $A/\rad(A)$ is projective as a $k$-module
\item[(c)] $\rad(A)^n=0$ for some $n\geq 1$.
\end{enumerate}
The first holds because $A/\rad(A)$ is semi-simple and thus $A/\rad(A) \otimes_k (A/\rad(A))^{\op}$ is semi-simple since $k$ is perfect.
The second follows because $k$ is a field. The third holds because $A$ is Artinian.

We fix such a splitting $K \to A$ and use it to view the $A-A$-bimodules $\rad(A)^n$ as $K-K$-bimodules. Put $E=\rad(A)/\rad(A)^2$ and also view it as a $K-K$-bimodule.

\textit{2. Step:} We choose a $K-K$-linear splitting of
\[  \rad(A)\to E.\]
It exists because  $K\tensor_kK^{\op}$ is semi-simple (as seen above). 
(This is the argument given in \cite[Proposition~3.10]{Berg}.)

By the universal property of tensor algebras, we get an induced map $\pi\colon T_K(E)\to A$.

\textit{3. Step:} we show that $\pi$ is surjective 
(cf.~\cite[Proposition~3.2]{Berg}). 

We argue by induction over $N$ such that
$\rad(A)^N=0$. The statement holds for $N=1$. Assume it holds for $N$. We consider $N+1$. By inductive assumption, $\pi_N \colon T_K(E)\xrightarrow{\pi} A \to A/\rad(A)^N$ is surjective. 
Let $a\in A$ with image $\bar{a}$ in $A/\rad(A)^N$. Since $\pi_N$ is surjective, there is
$x\in T_K(E)$ with $\pi_N(x)=\bar{a}$. By construction, $a-\pi(x)\in\rad(A)^N$. The map
\[ \rad(A)^{\tensor N}\to \rad(A)^N\]
is surjective. It factors via $E^{\tensor N}$  because $\rad(A)^{N+1}=0$. Hence,
$a-\pi(x)$ is in the image of $\pi$ and thus $a$ is in the image as well.

By construction, the kernel of $\pi$ is contained in $\rr_{\csmult_A}^2$.
\end{proof}

\begin{rem}The notion of a species with multiplicities is well-behaved under base change. Let $l/k$ be an extension of perfect fields. For all
finite dimensional $k$-algebras $A$, we have a canonical isomorphism
\[ (k\csmult_A)_l\isom l\csmult_{A_l}\]
compatible with the presentation of $A_l$ in Proposition~\ref{P:SpeciesWithRel}.

Note that even if $B$ is a basic $k$-algebra, its base change $B_l$ is not necessarily basic. (Consider for example $B=D$ a finite dimensional division algebra.) 

The quiver of the species with multiplicities of $A_l$ need \emph{not} be the same as the one of $A$. (Consider for example $B=\Q(i)$ over $k=\Q$ and $l=\Qbar$.)
\end{rem}

If $B$ is basic, the hereditary algebra $k\cs_B$ in \eqref{E:QuotientOfHereditary} corresponds to the `saturation' of $\modules{B}$ in Section~\ref{sec3}.


\section{Relations and higher extensions}\label{sec:ext}
Throughout the chapter we consider a finite dimensional $k$-algebra $A$ that can be represented in the form
\[ A=H/\ci\]
with a hereditary algebra $H$ and a two-sided ideal $\ci\subset H$.
Let $K=A/\rad(A)$ and $\rr_H$ the kernel of $H\to A\to K$. 

\begin{rem}
We do not assume that $H$ is finite dimensional. The ideal $\rr_H$ takes the role of the radical. However, our assumptions do not imply that $\rr_H=\rad(H)$, even if $H$ is finite dimensional. 
If $H$ is finite dimensional and 
 $\ci$ admissible (i.e., $\ci\subset\rad(H)^2$), then $\rr_H=\rad(H)$, cf.~Remark \ref{R:NoCycles}.
\end{rem}

The situation can be summed up in the commutative diagram
\[\begin{xy}\xymatrix{
		 &\rr_H\ar@{^{(}->}[d]\ar@{->>}[r]&\rad(A)\ar@{^{(}->}[d]\\
\ci\ar@{^{(}->}[ru]\ar@{^{(}->}[r]&H\ar@{->>}[r]\ar@{->>}[rd]&A\ar@{->>}[d]\\
&&K
}\end{xy}\]

The aim of this section is to describe the interaction between the ideals $\ci$ and $\rr_H$ in terms of extension groups. This will allow us to deduce dimension formulas in Section~\ref{sec:dim}. 
\subsection{Formulas for Ext and Tor}

The $\Tor$-formulas in the following result can also be deduced\footnote{We thank Julian K{\"u}lshammer for pointing out this reference.} from \cite{ButlerKing}, see also the earlier work of Bongartz \cite[Theorem 1.1.]{Bongartz} over algebraically closed fields. For the convenience of the reader, we include a proof. We use the notation
$D(-)=\Hom_k(-,k)$.

\begin{thm}\label{T:Bongartz}
Let $k$ be perfect, $A$ a finite dimensional $k$-algebra with $K=A/\rad(A)$.
Let $H$ be a hereditary $k$-algebra (not necessarily finite dimensional) and
$H\to A$ a surjective $k$-algebra homomorphism. Let $\rr_H$ be the kernel of $H\to A \to K$ and $\ci$ the kernel of $H\to A$. 
	Then there are isomorphisms of $K-K$-bimodules  
\begin{align}
D\Ext^{2n}_A(K,K)&\isom \Tor_{2n}^A(K, K) \\
 &\isom (\ci^n \cap \mathfrak{r}_H\ci^{n-1}\mathfrak{r}_H)/(\ci^n \mathfrak{r}_H +\mathfrak{r}_H\ci^n) \text{, for } n \geq 1.  \\
D\Ext^{2n+1}_A(K,K)&\isom \Tor_{2n+1}^A(K, K) \\
 &\isom (\mathfrak{r}_H\ci^n \cap \ci^{n}\mathfrak{r}_H)/(\ci^{n+1} +\mathfrak{r}_H\ci^n\mathfrak{r}_H) \text{, for } n \geq 0. \label{E:Bong2}
\end{align}
\end{thm}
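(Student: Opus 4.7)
The plan is to decouple the proof into two essentially independent parts: establishing the duality $D\Ext^n_A(K,K)\cong\Tor^A_n(K,K)$, and then identifying $\Tor^A_n(K,K)$ with the stated ideal quotients via an explicit resolution.

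For the duality, I would exploit that $k$ perfect forces $K=A/\rad(A)$ to be separable over $k$, hence a symmetric (Frobenius) $k$-algebra, giving a canonical $K$-bimodule isomorphism $DK\cong K$ (which is then also an $A$-bimodule isomorphism via $A\twoheadrightarrow K$). Combined with the standard hom-tensor adjunction $\Hom_A(M,DN)\cong D(N\otimes_A M)$ and exactness of $D=\Hom_k(-,k)$ (which holds because $k$ is a field), the derived version $\Ext^n_A(M,DN)\cong D\Tor^A_n(N,M)$ yields, on specialising $M=N=K$ and using $DK\cong K$, the first isomorphism $D\Ext^n_A(K,K)\cong\Tor^A_n(K,K)$.

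For the $\Tor$ computation, Wedderburn--Malcev (again using $k$ perfect) furnishes a $k$-algebra splitting $K\hookrightarrow A$ and, compatibly, $K\hookrightarrow H$, making both $A$ and $H$ into augmented $K$-algebras with augmentation ideals $\rad(A)$ and $\mathfrak{r}_H$ respectively. Because $H$ is hereditary, every sub-bimodule of $H$ is projective as an $H$-module; in particular $\mathfrak{r}_H$, $\ci$, and all iterated products $\mathfrak{r}_H\ci^n$, $\ci^n\mathfrak{r}_H$, $\mathfrak{r}_H\ci^n\mathfrak{r}_H$, $\ci^{n+1}$ are projective over $H$. This is precisely what is needed to assemble a projective $A$-bimodule resolution of $A$ whose terms alternate between ``arrow-type'' factors built from $\mathfrak{r}_H$ and ``relation-type'' factors built from $\ci$; concretely, this is the Butler--King/Bongartz resolution of \cite{ButlerKing,Bongartz} adapted from the algebraically closed setting to the relative situation over the separable subalgebra $K$. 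The Tor groups are then computed as $\Tor^A_n(K,K)=H_n(K\otimes_A P_\bullet\otimes_A K)$: the outer $A$-factors collapse against the two $K$'s, and after normalisation what survives in degree $n$ is a subquotient of $H$ involving precisely the iterated products above. The cycle subspace at degree $2n$ is $\ci^n\cap\mathfrak{r}_H\ci^{n-1}\mathfrak{r}_H$ and the boundary subspace is $\ci^n\mathfrak{r}_H+\mathfrak{r}_H\ci^n$, giving the even-degree formula; the odd-degree case is analogous. The intersections encode that a Tor class must be annihilated from both sides in the two-sided resolution, while the sums encode the images of the previous differential in its two components.

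The main obstacle is building this bimodule resolution precisely and verifying the cycle/boundary identifications inductively. The inductive step from the $n$-th to the $(n+1)$-th syzygy must be carried out symmetrically in left and right structure: a purely one-sided projective resolution of ${}_AK$ would produce only one half of each intersection and would obscure the bimodule formula. This symmetry is ultimately what hereditarity of $H$ buys us, since it guarantees that every iterate $\ci^{n}$, $\mathfrak{r}_H\ci^n$, $\ci^n\mathfrak{r}_H$ remains $H$-projective so that taking syzygies stays inside projectives and the induction goes through. Once the resolution and its differentials are in place, the proof reduces to bookkeeping of ideal products and intersections inside $H$, and the two displayed formulas fall out degree by degree.
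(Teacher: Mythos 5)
Your duality step $D\Ext^n_A(K,K)\cong\Tor^A_n(K,K)$ via $K$ separable/symmetric (trace form) and $\Hom_A(M,DN)\cong D(N\otimes_A M)$ is the same argument as in the paper's Lemma~\ref{lem:Ext_Tor}. The $\Tor$ computation, however, is where you diverge, and where there is a genuine misconception. You assert that ``a purely one-sided projective resolution of ${}_AK$ would produce only one half of each intersection and would obscure the bimodule formula,'' and therefore build a relative bimodule resolution of $A$. In fact the paper's proof uses precisely a one-sided left-$A$ projective resolution of $K$,
\[
\cdots \to \ci^2/\ci^3 \to \ci\mathfrak{r}_H/\ci^2\mathfrak{r}_H \to \ci/\ci^2 \to \mathfrak{r}_H/\ci\mathfrak{r}_H \to A \to K \to 0,
\]
whose terms are quotients of left ideals of $H$ (projective over $H$ by hereditarity, hence projective over $A$ after reducing mod $\ci$), and it obtains the full intersections. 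The point you are missing is that one need not compute $\Tor_m$ as the naive homology subquotient $\ker/\operatorname{im}$: by dimension shifting (the paper cites Cartan--Eilenberg, Lemma~\ref{L:CE}), one has $\Tor^A_m(K,K)=\ker\bigl(K\otimes_A \ker\partial_{m-1}\to K\otimes_A P_{m-1}\bigr)$, a \emph{pure kernel}. Computing $K\otimes_A\ker\partial_m$ and $K\otimes_A P_m$ as quotients of powers $\ci^l$, $\ci^l\mathfrak{r}_H$ of $H$, the kernel of the induced inclusion of quotients is automatically of the form $(M\cap N')/M'$, which is exactly where the intersection $\mathfrak{r}_H\ci^n\cap\ci^n\mathfrak{r}_H$ (resp.\ $\ci^n\cap\mathfrak{r}_H\ci^{n-1}\mathfrak{r}_H$) comes from. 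The bimodule structure is recovered not by two-sidedness of the resolution but by observing that the complex above is a complex of $A$-$H$-bimodules, so all the identifications are $K$-$K$-equivariant.

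Your alternative route---a relative projective bimodule resolution of $A$ over the separable subalgebra $K$ (Wedderburn--Malcev) and then $\Tor^A_n(K,K)=H_n(K\otimes_A P_\bullet\otimes_A K)$---is in principle viable and is essentially how Butler--King proceed (the paper acknowledges their result covers this theorem). But your write-up does not actually construct the bimodule resolution or verify the cycle/boundary identifications; it delegates this to ``bookkeeping,'' which is exactly the content of the theorem. So as it stands the proposal is a plan, not a proof, and the claim that a one-sided resolution fails is wrong: that is what makes the paper's argument both correct and appreciably simpler than what you sketch.
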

\begin{rem}
\begin{enumerate}
\item By Proposition~\ref{P:SpeciesWithRel}, $H=k\csmult_A$ satisfies the assumptions of Theorem \ref{T:Bongartz}. 
This is the case we will be interested in.
\item
If $k$ is not perfect, the formula for $\Tor^A$ still holds true. 
\end{enumerate}
\end{rem}

We divide the proof into several lemmas. 

%
%
%

The following lemma is well-known, cf. e.g. \cite[Proposition 1]{ENN}.

\begin{lem}\label{L:Proj}
Let $A$ be a $k$-algebra and $I \subset A$ a two-sided ideal.
Assume $A=P \oplus P'$ as left $A$-modules. Then $P/IP$ is a projective $A/I$-module.
\end{lem}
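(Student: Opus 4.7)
The plan is to view $P/IP$ as the result of base change along $A\to A/I$ applied to $P$, and exploit the fact that direct summands of free modules are projective.

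First I would apply the functor $A/I\tensor_A(-)$, where $A/I$ is regarded as an $A/I\!-\!A$-bimodule, to the decomposition $A = P\oplus P'$ of left $A$-modules. Since the functor is additive, this yields a decomposition
\[ A/I\tensor_A A \;\isom\; (A/I\tensor_A P)\oplus (A/I\tensor_A P') \]
as left $A/I$-modules. The left-hand side is canonically isomorphic to $A/I$ as a left $A/I$-module.

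Next I would identify $A/I\tensor_A P$ with $P/IP$ as $A/I$-modules. This is the standard right-exact-functor computation: tensoring the short exact sequence $0\to I\to A\to A/I\to 0$ (of right $A$-modules) with $P$ on the right gives a right exact sequence
\[ I\tensor_A P\longrightarrow P\longrightarrow A/I\tensor_A P\longrightarrow 0, \]
whose image in $P$ is precisely $IP$.

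Combining these two steps, $P/IP$ is a direct summand of the free left $A/I$-module $A/I$ of rank one, hence projective. No serious obstacle arises; the argument is routine and does not require finiteness of any of the objects involved.
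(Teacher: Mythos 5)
Your argument is correct and coincides with the paper's own proof: both apply $A/I\otimes_A(-)$ to the decomposition $A=P\oplus P'$, observe that $A/I\otimes_A P$ is then a direct summand of $A/I$ and hence projective, and identify $A/I\otimes_A P\cong P/IP$. You simply spell out the right-exactness computation in a bit more detail.
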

\begin{proof}
Indeed, $A/I \otimes_A P$ is a direct summand of $A/I \otimes_A A \cong A/I$ and therefore projective
as an $A/I$-module. The isomorphism  $A/I \otimes_A P \to P/IP$ completes the proof.
\end{proof}

The following observation is the key part in the proof of Theorem \ref{T:Bongartz}, cf. \cite[Proof of Proposition 3]{ENN}.
\begin{lem}\label{L:ProjR}
Let $A=H/\ci$ be a finite dimensional  algebra as in Theorem \ref{T:Bongartz} and let $K=A/\rad(A)$. Then there is a projective resolution of $A$-modules
\begin{align}\label{E:resol}
\cdots \to P_6 \to P_5 \to \ci^2/\ci^3 \to \ci\mathfrak{r}_H/\ci^2\mathfrak{r}_H \to \ci/\ci^2 \to \mathfrak{r}_H/\ci\mathfrak{r}_H \to A \to K \to 0,
\end{align}
where $P_{2l}:=\ci^l/\ci^{l+1}$ and $P_{2l+1}:=\ci^l\mathfrak{r}_H/\ci^{l+1}\mathfrak{r}_H$ for all $l \geq 0$ and the maps $\partial_m\colon P_m \to P_{m-1}$ are induced by the inclusions of $H-H$-bimodules
\begin{align} \label{E:incl}
\cdots  \ci^3 \mathfrak{r}_H  \subset \ci^3 \subset \ci^2 \mathfrak{r}_H  \subset \ci^2 \subset  \ci \mathfrak{r}_H  \subset \ci \subset \mathfrak{r}_H \subset \ci^0=H.
\end{align}
The complex \eqref{E:resol} is a complex of $A-H$-bimodules.
\end{lem}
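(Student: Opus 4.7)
The plan is to realize the putative resolution as the two-step associated graded of a decreasing filtration of $H$ by two-sided ideals. Explicitly, I would set $F_{2l}=\ci^l$ and $F_{2l+1}=\ci^l\rr_H$ for $l\geq 0$, so that the chain \eqref{E:incl} reads $F_0=H\supset F_1\supset F_2\supset\cdots$. The nestings are immediate: $F_{2l+1}\subset F_{2l}$ since $\rr_H\subset H$, and $F_{2l+2}\subset F_{2l+1}$ because $\ci\subset\rr_H$ gives $\ci^{l+1}=\ci^l\cdot\ci\subset\ci^l\rr_H$. With this notation $P_m=F_m/F_{m+2}$ matches the lemma (in particular $P_0=H/\ci=A$), and the stated differential $\partial_m\colon P_m\to P_{m-1}$ is induced by the inclusion $F_m\hookrightarrow F_{m-1}$, which is well-defined since $F_{m+2}\subset F_{m+1}$. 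The composite $\partial_{m-1}\partial_m$ vanishes because it factors through the denominator $F_m$ of $P_{m-2}$.

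To see projectivity of $P_m$ as a left $A$-module, I would use that each $F_m$ is a left ideal in the hereditary algebra $H$, and therefore projective as a left $H$-module. The key observation is $\ci F_m=F_{m+2}$ in both parities (either $\ci\cdot\ci^l=\ci^{l+1}$ or $\ci\cdot\ci^l\rr_H=\ci^{l+1}\rr_H$), whence
\[ P_m \;=\; F_m/\ci F_m \;\isom\; A\otimes_H F_m. \]
Tensoring a projective left $H$-module with $A$ along $H\to A$ sends a direct summand of a free $H$-module to a direct summand of a free $A$-module, and hence yields a projective left $A$-module; this is just the natural extension of Lemma~\ref{L:Proj} from ${}_HH$ to arbitrary projective $H$-modules.

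Exactness is then formal from the filtration. The kernel of $\partial_m\colon F_m/F_{m+2}\to F_{m-1}/F_{m+1}$ consists of the classes of elements of $F_m\cap F_{m+1}=F_{m+1}$, i.e.\ equals $F_{m+1}/F_{m+2}$, while the image of $\partial_{m+1}\colon F_{m+1}/F_{m+3}\to F_m/F_{m+2}$ equals $(F_{m+1}+F_{m+2})/F_{m+2}=F_{m+1}/F_{m+2}$, using $F_{m+2}\subset F_{m+1}$. The two coincide. At the right end, $\ker(A\to K)=\rr_H/\ci$ agrees with the image of $\partial_1\colon\rr_H/\ci\rr_H\to H/\ci$ induced by $\rr_H\subset H$. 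The $A$-$H$-bimodule claim is immediate, since each $F_m$ is a two-sided ideal of $H$ and the differentials are $H$-$H$-bilinear inclusions; the left $H$-action descends to a left $A$-action precisely because $\ci F_m\subset F_{m+2}$.

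The main obstacle, such as it is, lies in the projectivity step: since $H$ is allowed to be infinite dimensional, one must invoke the general fact that $A\otimes_H(-)$ preserves projectivity rather than the literal statement of Lemma~\ref{L:Proj}. Once this is in place, the entire argument is a transparent reading off of the filtration $F_\bullet$.
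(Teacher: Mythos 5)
Your proof is correct and follows essentially the same route as the paper's: you read off the projectives and the differentials from the filtration $F_m$ (the paper writes the chain \eqref{E:incl} directly), use hereditarity of $H$ plus base change along $H\to A$ for projectivity, and compute kernels and images from the filtration (the paper packages the exactness computation into the short exact sequences $0 \to F_{m+1}/F_{m+2} \to F_m/F_{m+2} \to F_m/F_{m+1} \to 0$, which is the same calculation). One correction to your closing remark: the literal statement of Lemma~\ref{L:Proj} is insufficient not because $H$ may be infinite dimensional, but because a projective left ideal $F_m\subset H$ is a direct summand of some $H^n$ rather than of ${}_HH$ itself — this already fails for finite dimensional hereditary $H$ (e.g.\ $\rad(kQ)\subset kQ$ for $Q$ the $A_2$ quiver). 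The fix you propose — that $A\otimes_H(-)$ preserves projectivity — is exactly right, and the proof of Lemma~\ref{L:Proj} gives it word for word once $A$ there is replaced by $A^n$.
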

\begin{proof}
Since $H$ is hereditary every left ideal $P \subset H$ is a projective $H$-module. Now Lemma \ref{L:Proj} shows that the $P_i$ are projective $A=H/\ci$-modules. The inclusions in \eqref{E:incl} yield short exact sequences of $A$-modules $A-H$-bimodules
\begin{gather}
0 \to \ci^l \mathfrak{r}_H/\ci^{l+1} \to \ci^l/\ci^{l+1} \to \ci^l/\ci^l \mathfrak{r}_H \to 0 \label{E:K_2l+1} \\
0 \to  \ci^{l+1}/\ci^{l+1} \mathfrak{r}_H \to \ci^l\mathfrak{r}_H/\ci^{l+1}\mathfrak{r}_H \to \ci^l\mathfrak{r}_H/\ci^{l+1} \to 0 
\end{gather}
for all $l \geq 0$ showing that the sequence \eqref{E:resol} is exact -- note that $H/\mathfrak{r}_H \cong K=A/\rad(A) $ as $A-H$-bimodules. This completes the proof.
\end{proof}

\begin{rem}
In general, the projective resolution in Lemma \ref{L:ProjR} is far from being a minimal projective resolution. Indeed, consider, for example, the following quiver
\begin{equation*}\begin{tikzpicture}[description/.style={fill=white,inner sep=2pt}]
   \matrix (m) [matrix of math nodes, row sep=3em,
                 column sep=2.5em, text height=1.5ex, text depth=0.25ex,
                 inner sep=2pt, nodes={inner xsep=0.3333em, inner
ysep=0.3333em}] at (0, 0)
    {  1&2&3& \cdots & n-1 &n. \\};
 \path[->] ($(m-1-1.east)$)   edge  node [ scale=0.75, yshift=2.5mm] [midway] {$a_1$}  ($(m-1-2.west)$) ;
 \path[->] ($(m-1-2.east)$)  edge  node  [ scale=0.75, yshift=2.5mm] [midway] {$a_2$}  ($(m-1-3.west)$) ;
 \path[->] ($(m-1-3.east)$)  edge node  [ scale=0.75, yshift=2.5mm] [midway] {$a_{3}$}  ($(m-1-4.west)$) ;
\path[->] ($(m-1-4.east)$)  edge node  [ scale=0.75, yshift=2.5mm] [midway] {$a_{n-2}$}  ($(m-1-5.west)$) ;
 \path[->] ($(m-1-5.east)$)  edge  node  [ scale=0.75, yshift=2.5mm][midway] {$a_{n-1}$} ($(m-1-6.west)$) ;
 \end{tikzpicture}\end{equation*}
 Let $I \subset kQ$ be the two-sided ideal generated by the following paths of length~$2$
 \[
 a_3a_2, \ a_5a_4, \ a_7a_6, \ \ldots, \ a_{n-2}a_{n-3}.
 \]
Let $A=kQ/I$. One can compute that $\gldim A=2$. If $n$ is odd, then 
\[
0 \neq a_{n-1}a_{n-2}a_{n-3}\cdot \ldots \cdot a_3a_2 \in I^{\frac{n-3}{2}}\mathfrak{r}_{kQ} 
\]
shows that the projective resolution of $K$ in Lemma \ref{L:ProjR} has length $n-2$. 
\end{rem}

\begin{lem}\label{L:CE} Let $M$ be an $A$-module. Then, in the notation of Lemma \ref{L:ProjR},
\[
\Tor_A^m(M, K)=\ker(M\otimes_A \ker \partial_{m-1}  \to M \otimes_A P_{m-1})
\]
\end{lem}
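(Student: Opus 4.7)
The plan is to compute $\Tor_A^m(M,K)$ using the explicit projective resolution $P_\bullet \to K$ of Lemma~\ref{L:ProjR}, and then apply the standard dimension shifting / syzygy trick. By definition,
\[
\Tor_A^m(M,K) = H_m(M \otimes_A P_\bullet).
\]
For $j \geq 0$ write $Z_j := \ker \partial_{j-1}$ (with the convention $Z_0 = K$ and $\partial_{-1}\colon P_0 \to K$), so that the resolution breaks up into short exact sequences of $A$-modules
\[
0 \to Z_{j+1} \to P_j \to Z_j \to 0.
\]
First I would remark that since each $P_j$ is projective by Lemma~\ref{L:ProjR}, one has $\Tor_i^A(M, P_j) = 0$ for $i \geq 1$.

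Next, applying $M \otimes_A -$ to the displayed short exact sequence and invoking the long exact Tor sequence yields isomorphisms
\[
\Tor_i^A(M, Z_j) \cong \Tor_{i-1}^A(M, Z_{j+1}) \quad \text{for all } i \geq 2,
\]
together with a four-term exact sequence
\[
0 \to \Tor_1^A(M, Z_j) \to M \otimes_A Z_{j+1} \to M \otimes_A P_j \to M \otimes_A Z_j \to 0.
\]
Iterating the first isomorphism (starting from $j=0$ and $i=m$) produces the dimension-shifting chain
\[
\Tor_m^A(M,K) = \Tor_m^A(M, Z_0) \cong \Tor_{m-1}^A(M, Z_1) \cong \cdots \cong \Tor_1^A(M, Z_{m-1}).
\]

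Finally, I would apply the four-term exact sequence above with $j = m-1$, which identifies
\[
\Tor_1^A(M, Z_{m-1}) = \ker\bigl(M \otimes_A Z_m \to M \otimes_A P_{m-1}\bigr),
\]
where the map is induced by the inclusion $Z_m = \ker \partial_{m-1} \hookrightarrow P_{m-1}$. Substituting $Z_m = \ker \partial_{m-1}$ gives the claimed formula. There is no real obstacle here; the only point requiring care is bookkeeping of the indices (particularly the base case $Z_0 = K$ and checking that the dimension shift terminates at $i=1$, where we get a kernel rather than an equality of Tor groups).
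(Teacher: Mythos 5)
Your proof is correct and amounts to a self-contained version of exactly what the paper does: the paper simply invokes Cartan--Eilenberg (Exercise~V.10.1), and your dimension-shifting argument via the syzygy short exact sequences $0 \to Z_{j+1} \to P_j \to Z_j \to 0$ and the vanishing of $\Tor_i^A(M,P_j)$ for $i\geq 1$ is the standard proof of that statement. The only blemish is a small notational inconsistency in your conventions: you declare both $Z_0 = K$ and $Z_j = \ker\partial_{j-1}$, and introduce ``$\partial_{-1}\colon P_0\to K$'', which clashes with the fact that your short exact sequence at $j=0$ forces $Z_1 = \ker\partial_0$ where $\partial_0\colon P_0\to K$ is the augmentation; the $\partial_{-1}$ should simply not appear, and $Z_0 = K$ is a standalone convention. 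This is cosmetic and does not affect the validity of the iterated isomorphisms $\Tor_m^A(M,Z_0)\cong\cdots\cong\Tor_1^A(M,Z_{m-1})$ or the final identification via the four-term exact sequence.
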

\begin{proof}
Since $M \otimes_A -$ is right exact, the statement follows for example from \cite[Exercise V.10.1 on p. 104]{CE}.
\end{proof} 

In the computations below, we will repeatedly use the following isomorphism (using the notation of Lemma \ref{L:ProjR}) of $A-H$-bimodules for all $A-H$-bimodules $M$
\begin{align}\label{E:modrad}
K \otimes_A M \cong M/\mathfrak{r}_H M.
\end{align}

\begin{lem}\label{L:KerTimesK}
\begin{align}
K\otimes_A \ker \partial_{2n-1} \cong \ci^n/(\mathfrak{r}_H\ci^n\ + \ci^{n} \mathfrak{r}_H) \text{ for all }n \geq 1 \\
K\otimes_A \ker \partial_{2n} \cong \ci^n \mathfrak{r}_H/(\mathfrak{r}_H\ci^n\mathfrak{r}_H + \ci^{n+1}) \text{ for all }n \geq 0 
\end{align}

\end{lem}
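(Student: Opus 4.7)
The plan is to identify each kernel as an explicit subquotient of $H$, and then invoke the identification \eqref{E:modrad}, which reads $K\otimes_A M\cong M/\rr_H M$ for any $A$-$H$-bimodule $M$.

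First I would make both differentials explicit at the level of sub-bimodules of $H$. Since $\ci\subset\rr_H$ (both are kernels of maps out of $H$, and $H\to K$ factors through $H\to A$), we also have $\ci^n=\ci^{n-1}\cdot\ci\subset\ci^{n-1}\rr_H$ and $\ci^{n+1}\subset\ci^n\rr_H$. Hence $\partial_{2n-1}$ is the well-defined map $\ci^{n-1}\rr_H/\ci^n\rr_H\to\ci^{n-1}/\ci^n$ induced by the inclusion $\ci^{n-1}\rr_H\subset\ci^{n-1}$, and $\partial_{2n}$ is the well-defined map $\ci^n/\ci^{n+1}\to\ci^{n-1}\rr_H/\ci^n\rr_H$ induced by $\ci^n\subset\ci^{n-1}\rr_H$.

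Next I would read the kernels directly from the short exact sequences exhibited in the proof of Lemma~\ref{L:ProjR}. Concretely, the sequence
\[ 0\to \ci^n/\ci^n\rr_H\to\ci^{n-1}\rr_H/\ci^n\rr_H\to\ci^{n-1}\rr_H/\ci^n\to 0\]
identifies $\ker\partial_{2n-1}$ with $\ci^n/\ci^n\rr_H$, and the sequence
\[ 0\to\ci^n\rr_H/\ci^{n+1}\to\ci^n/\ci^{n+1}\to\ci^n/\ci^n\rr_H\to 0\]
identifies $\ker\partial_{2n}$ with $\ci^n\rr_H/\ci^{n+1}$. The case $n=0$ of the even formula is the statement $\ker(A\to K)=\rad(A)=\rr_H/\ci$, which also fits.

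Finally, I would apply \eqref{E:modrad} to each kernel. In the odd case,
\[ \rr_H\cdot(\ci^n/\ci^n\rr_H)=(\rr_H\ci^n+\ci^n\rr_H)/\ci^n\rr_H,\]
so that $K\otimes_A\ker\partial_{2n-1}\cong\ci^n/(\rr_H\ci^n+\ci^n\rr_H)$. In the even case,
\[ \rr_H\cdot(\ci^n\rr_H/\ci^{n+1})=(\rr_H\ci^n\rr_H+\ci^{n+1})/\ci^{n+1},\]
so that $K\otimes_A\ker\partial_{2n}\cong\ci^n\rr_H/(\rr_H\ci^n\rr_H+\ci^{n+1})$, as claimed.

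There is no conceptual obstacle; the only thing to be careful about is the bookkeeping: checking at each step that the asserted quotients make sense (numerator contains denominator), and that every identification is honestly an isomorphism of $A$-$H$-bimodules so that tensoring by $K$ from the left is legitimate. The ingredient $\ci\subset\rr_H$ is what makes all the necessary containments hold.
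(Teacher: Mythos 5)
Your proof is correct and takes essentially the same route as the paper: identify the kernels from the short exact sequences exhibited in the proof of Lemma~\ref{L:ProjR} (namely $\ker\partial_{2n-1}\cong\ci^n/\ci^n\rr_H$ and $\ker\partial_{2n}\cong\ci^n\rr_H/\ci^{n+1}$), then apply~\eqref{E:modrad} and the isomorphism theorem. The only difference is presentational: you spell out both parities and note the $n=0$ boundary case, whereas the paper treats only the even case and declares the odd case analogous.
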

\begin{proof}
We only do the computation for the second case. The first case is similar. By \eqref{E:K_2l+1}, we have
\[
\ker \partial_{2n} \cong \ci^n\mathfrak{r}_H/\ci^{n+1}
\]
Now, 
\[
\mathfrak{r}_H(\ci^n\mathfrak{r}_H/\ci^{n+1}) \cong (\mathfrak{r}_H\ci^n\mathfrak{r}_H + \ci^{n+1})/\ci^{n+1}
\]

Thus, applying \eqref{E:modrad} to $M=\ci^n\mathfrak{r}_H/\ci^{n+1}$   yields
\[
K\otimes_A \ker \partial_{2n} \cong \frac{\ci^n\mathfrak{r}_H/\ci^{n+1}}{(\mathfrak{r}_H\ci^n\mathfrak{r}_H + \ci^{n+1})/\ci^{n+1}}
\cong \frac{\ci^n \mathfrak{r}_H}{(\mathfrak{r}_H\ci^n\mathfrak{r}_H + \ci^{n+1})}
\]
using the isomorphism theorem in the last step.
\end{proof}

\begin{lem}\label{L:ProjTimesK}
\begin{align}
K\otimes_A P_{2n-1} \cong \ci^{n-1}\mathfrak{r}_H/(\mathfrak{r}_H\ci^{n-1}\mathfrak{r}_H + \ci^{n} \mathfrak{r}_H) \text{ for all }n \geq 1 \\
K\otimes_A P_{2n} \cong \ci^n/(\mathfrak{r}_H\ci^n + \ci^{n+1}) \text{ for all }n \geq 0 
\end{align}
\end{lem}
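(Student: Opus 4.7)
The plan is to mimic the proof of Lemma~\ref{L:KerTimesK}, simply applying the identity \eqref{E:modrad} directly to the projective modules $P_m$ in place of the kernels $\ker\partial_m$, and then cleaning up with the isomorphism theorem. No new ingredient beyond what was used to handle $\ker\partial_{2n}$ is needed.

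Concretely, for the even case I would argue as follows. By definition $P_{2n}=\ci^n/\ci^{n+1}$, and as an $A-H$-bimodule its left $\mathfrak{r}_H$-action (which factors through $A$ on the left since $\mathfrak{r}_H/\ci$ acts) yields
\[
\mathfrak{r}_H P_{2n} \;=\; (\mathfrak{r}_H\ci^n + \ci^{n+1})/\ci^{n+1}.
\]
Applying \eqref{E:modrad} to $M=P_{2n}$ and using the third isomorphism theorem then gives
\[
K\otimes_A P_{2n} \;\cong\; P_{2n}/\mathfrak{r}_H P_{2n} \;\cong\; \frac{\ci^n/\ci^{n+1}}{(\mathfrak{r}_H\ci^n+\ci^{n+1})/\ci^{n+1}} \;\cong\; \ci^n/(\mathfrak{r}_H\ci^n+\ci^{n+1}),
\]
as required.

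For the odd case, I would run the identical argument with $P_{2n-1}=\ci^{n-1}\mathfrak{r}_H/\ci^n\mathfrak{r}_H$. Here
\[
\mathfrak{r}_H P_{2n-1} \;=\; (\mathfrak{r}_H\ci^{n-1}\mathfrak{r}_H + \ci^n\mathfrak{r}_H)/\ci^n\mathfrak{r}_H,
\]
and then \eqref{E:modrad} together with the isomorphism theorem give
\[
K\otimes_A P_{2n-1} \;\cong\; \ci^{n-1}\mathfrak{r}_H/(\mathfrak{r}_H\ci^{n-1}\mathfrak{r}_H+\ci^n\mathfrak{r}_H).
\]

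The only thing to be slightly careful about is to verify that the left $A$-action on $P_m$ extends to the left $\mathfrak{r}_H$-action needed to write $\mathfrak{r}_H P_m$; this is automatic from the $A$-$H$-bimodule structure stated in Lemma~\ref{L:ProjR}, since $\mathfrak{r}_H$ acts on the left via its image in $A$. Overall, there is no genuine obstacle — the statement is a direct computation parallel to Lemma~\ref{L:KerTimesK}, and I would present the two cases together in a single short paragraph.
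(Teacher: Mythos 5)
Your argument is exactly the paper's proof: identify $P_{2n}=\ci^n/\ci^{n+1}$ (resp.\ $P_{2n-1}=\ci^{n-1}\mathfrak{r}_H/\ci^n\mathfrak{r}_H$), compute $\mathfrak{r}_H P_m$ using the bimodule structure, and apply \eqref{E:modrad} together with the third isomorphism theorem. The only cosmetic difference is that you spell out the odd case, which the paper leaves to the reader.
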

\begin{proof}
Again, we only do the computation for the second case. By the construction in Lemma \ref{L:ProjR}, we have
\[
P_{2n}=\ci^n/\ci^{n+1}
\]
Now,
\[
\mathfrak{r}_H(\ci^n/\ci^{n+1}) \cong (\mathfrak{r}_H\ci^n + \ci^{n+1})/\ci^{n+1}.
\]
Thus, applying \eqref{E:modrad} to $M=\ci^n/\ci^{n+1}$ yields
\[
K\otimes_A P_{2n} \cong \frac{\ci^n/\ci^{n+1}}{(\mathfrak{r}_H\ci^n + \ci^{n+1})/\ci^{n+1}}
\cong \frac{\ci^n}{(\mathfrak{r}_H\ci^n + \ci^{n+1})}
\]
\end{proof}

\begin{proof}[Proof of Theorem \ref{T:Bongartz}]
We only prove \eqref{E:Bong2}. By Lemma~\ref{lem:Ext_Tor} it suffices to compute the Tor-group.
By Lemma~\ref{L:CE}, 
\[
\Tor^A_{2n+1}(K, K) \cong \ker(K\otimes_A \ker \partial_{2n}  \to K \otimes_A P_{2n})
\]
which by Lemmas \ref{L:KerTimesK} and \ref{L:ProjTimesK} is isomorphic to 
\begin{multline*}
\ker\left(\frac{\ci^n \mathfrak{r}_H}{(\mathfrak{r}_H\ci^n\mathfrak{r}_H + \ci^{n+1})}  \to \frac{\ci^n}{(\mathfrak{r}_H\ci^n + \ci^{n+1})}\right)  \\[1ex]
\cong \frac{(\mathfrak{r}_H\ci^n + \ci^{n+1}) \cap \ci^n \mathfrak{r}_H}{(\mathfrak{r}_H\ci^n\mathfrak{r}_H + \ci^{n+1})} \cong \frac{\mathfrak{r}_H\ci^n \cap \ci^n \mathfrak{r}_H}{(\mathfrak{r}_H\ci^n\mathfrak{r}_H + \ci^{n+1})}
\end{multline*}
where the last step uses that $\ci^{n+1}$ is contained in the ideal we quotient out by. All isomorphisms are isomorphisms of $A-H$-bimodules by construction. As all terms are $K-K$-bimodules, they are even isomorphisms of $K-K$-bimodules.
\end{proof}

\subsection{A formula for the relation space}

\begin{thm} \label{thm:generate_ideal}
Let $H$ be a finite dimensional hereditary algebra, $\ci\subset\rad(H)^2$ an admissible ideal, $A=H/\ci$.

Then
\begin{align}\label{E:Ext2}
	D\Ext^{2}_A(K, K)\cong& \ci /(\ci \rad(H) +\rad(H)\ci)
\end{align}
as $K-K$-bimodules.
Let $s\colon D\Ext^2_A(K,K)\to \ci$ be a section of (\ref{E:Ext2}). 
Then $\ci$ is generated by the image of $s$ as a two-sided ideal.
\end{thm}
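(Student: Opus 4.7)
The first isomorphism follows directly from Theorem~\ref{T:Bongartz} applied with $n=1$. To set up the specialisation, I would first observe that under our hypotheses $H$ is finite dimensional and the admissible ideal $\ci$ is contained in $\rad(H)$, so $A=H/\ci$ has radical $\rad(A)=\rad(H)/\ci$ and semi-simple quotient $K=H/\rad(H)$. In the notation of the chapter this means $\rr_H=\rad(H)$. The formula of Theorem~\ref{T:Bongartz} for $n=1$ then reads
\[
D\Ext^2_A(K,K)\cong \bigl(\ci\cap \rr_H\cdot H\cdot \rr_H\bigr)\big/\bigl(\ci\rr_H+\rr_H\ci\bigr) = \bigl(\ci\cap\rad(H)^2\bigr)\big/\bigl(\ci\rad(H)+\rad(H)\ci\bigr).
\]
Admissibility gives $\ci\subseteq\rad(H)^2$, so the intersection collapses to $\ci$, yielding \eqref{E:Ext2}.

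For the second assertion, write $R=\rad(H)$ and let $J\subseteq H$ be the two-sided ideal generated by $\operatorname{im}(s)$. Since $s$ is a section of the projection $\ci\to\ci/(\ci R+R\ci)$, its image spans this quotient as a $K$-space, so $\operatorname{im}(s)+\ci R+R\ci=\ci$ and in particular
\[
\ci=J+R\ci+\ci R.
\]
The plan is then a Nakayama-type iteration. Substituting the identity into itself and absorbing the terms $RJ$ and $JR$ back into $J$ (since $J$ is two-sided) gives, by induction on $m\ge 1$,
\[
\ci\subseteq J+\sum_{i+j=m}R^i\,\ci\,R^j.
\]
Admissibility forces $R$ to be nilpotent; fix $N$ with $R^N=0$ and take $m=2N-1$. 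Every pair $(i,j)$ with $i+j=2N-1$ satisfies $\max(i,j)\ge N$, so each term $R^i\ci R^j$ vanishes, giving $\ci\subseteq J$ and hence $\ci=J$.

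The substantive work is already absorbed into Theorem~\ref{T:Bongartz}, so there is no genuine obstacle here. The two points that deserve care are the identification $\rr_H=\rad(H)$ under admissibility, which is precisely what makes the numerator $\ci\cap\rad(H)^2$ simplify to $\ci$, and the choice of iteration depth $2N-1$ needed to exhaust $\ci$ via nilpotence of $R$. I would write out the iteration step carefully, as it is the conceptual heart of the generation claim, rather than appeal to a generic formulation of Nakayama's lemma.
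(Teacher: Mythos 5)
Your proof is correct and follows essentially the same route as the paper: the isomorphism \eqref{E:Ext2} is the $n=1$ case of Theorem~\ref{T:Bongartz} after noting $\rr_H=\rad(H)$ and $\ci\cap\rad(H)^2=\ci$, and the generation statement is the paper's recursive substitution argument, which you have merely phrased as a clean induction on the depth $m$ with an explicit nilpotence bound. (One could tighten the bound slightly since $R^i\ci R^j\subseteq R^{i+j+2}$ already forces vanishing once $i+j\ge N-2$, but $m=2N-1$ is perfectly adequate.)
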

\begin{proof} We are in the case $\rr_H=\rad(H)$.
The formula is the case $n=1$ of Theorem~\ref{T:Bongartz} because we assume $\ci\subset\rr_H^2$. It implies
\[ \ci= \im(s)+ \ci\rr_H+\rr_H\ci.\]
Recursively, this implies
\[ \ci=\im(s)+\im(s)\rr_H+\rr_H\im(s)+\ci\rr_H^2+\rr_H\ci\rr_H+\rr_H^2\ci\]
etc. Note that the error terms $\rr_H^a\ci\rr_H^b$ are contained in $\rr_H^{a+b+2}$.  They vanish for $a,b$ big enough since the radical is nilpotent. 
\end{proof}

\begin{rem}
The second part of the Theorem is not true in general if $H$ is infinite dimensional, cf. e.g. \cite[Section 7]{BIKR}. There are examples such that 
$H/(\im(s))$ is infinite dimensional. This problem can be fixed. We restrict to the basic case,  $H\cong k\mathcal{S}$ for a species $\cs$. After passing to the $\rr_H$-adic completion $\hat{H}$ of $H$, the result still holds, see e.g. \cite[Section 3, in particular, Prop. 3.4.]{BIRSm}, where this is discussed in the case of quivers.
Note that if $H$ is finite dimensional then $\hat{H}=H$. For path algebras of quivers our statement (and its proof) are thus special cases of \cite[Prop. 3.4.]{BIRSm}.  
\end{rem}

\begin{cor}\label{C:Hereditary}
Assume that $H$ is finite dimensional and hereditary, $\ci\subset H$ an admissible ideal. The following conditions are equivalent
\begin{enumerate}
\item \label{I:hered}the algebra $A=H/\ci$ is hereditary, i.e. $\gldim A \leq 1$.
\item\label{I:Ext2=0} $\Ext_A^2(K, K)=0$.
\item\label{I:I=0} $\ci=0$.
\end{enumerate}
\end{cor}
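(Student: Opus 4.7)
The plan is to prove the three implications \eqref{I:I=0} $\Rightarrow$ \eqref{I:hered} $\Rightarrow$ \eqref{I:Ext2=0} $\Rightarrow$ \eqref{I:I=0}, closing the cycle.

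The implication \eqref{I:I=0} $\Rightarrow$ \eqref{I:hered} is immediate: if $\ci=0$ then $A=H$, which is hereditary by hypothesis. The implication \eqref{I:hered} $\Rightarrow$ \eqref{I:Ext2=0} is a standard fact from homological algebra, namely that $\gldim A \leq 1$ forces $\Ext^{\geq 2}_A$ to vanish on all pairs of $A$-modules, in particular on $(K,K)$.

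The substantive step is \eqref{I:Ext2=0} $\Rightarrow$ \eqref{I:I=0}, and this is where I would invoke the work already done in this subsection. By Theorem~\ref{thm:generate_ideal} (which itself is the $n=1$ specialisation of Theorem~\ref{T:Bongartz} together with the lifting argument), we have a $K$-$K$-bimodule isomorphism
\[
D\Ext^{2}_A(K, K)\;\cong\;\ci /(\ci \rad(H) +\rad(H)\ci),
\]
and a section $s\colon D\Ext^2_A(K,K)\to \ci$ of this quotient map whose image generates $\ci$ as a two-sided ideal of $H$. If $\Ext^2_A(K,K)=0$, then $D\Ext^2_A(K,K)=0$, hence $s=0$, hence $\ci$ is generated by the zero map, so $\ci=0$.

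No step looks genuinely obstructive here: all the heavy lifting has already been carried out in Theorem~\ref{T:Bongartz} and Theorem~\ref{thm:generate_ideal}. The only thing worth a sentence of care is checking that the hypotheses of Theorem~\ref{thm:generate_ideal} are met (i.e.\ $H$ finite dimensional hereditary with $\ci\subset\rad(H)^2$ admissible), which are exactly the standing hypotheses of the corollary, so we can apply it directly without modification.
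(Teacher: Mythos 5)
Your proof is correct and matches the paper's argument: both treat the implications \eqref{I:I=0}\,$\Rightarrow$\,\eqref{I:hered} and \eqref{I:hered}\,$\Rightarrow$\,\eqref{I:Ext2=0} as trivial, and both establish \eqref{I:Ext2=0}\,$\Rightarrow$\,\eqref{I:I=0} by invoking Theorem~\ref{thm:generate_ideal} to conclude that $\ci$ is generated by $D\Ext^2_A(K,K)=0$. No meaningful difference in route or emphasis.
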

\begin{proof}
The implications from (\refeq{I:hered}) to (\refeq{I:Ext2=0}) and from
(\refeq{I:I=0}) to (\refeq{I:hered}) are trivial. Assume (\ref{I:Ext2=0}).
	By Theorem~\ref{thm:generate_ideal} the ideal $\ci$ is generated by $D\Ext^2_A(K,K)=0$. This implies that $\ci=0$.
\end{proof}

The isomorphism \eqref{E:Ext2} has the following consequence, which we will use later.
\begin{cor}\label{C:LongRelations}
In the notation of Theorem~\ref{thm:generate_ideal}, assume that
\begin{align}\label{E:LongRelations}
	\ci \rad(H) =0=\rad(H) \ci.
\end{align}
Then 
\begin{align}
\ci \isom D\Ext^2_A(K, K).
\end{align}
	This happens in particular, if $\rad(H)^3=0$.
\end{cor}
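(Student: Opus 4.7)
The plan is to deduce this directly from the isomorphism \eqref{E:Ext2} of Theorem~\ref{thm:generate_ideal}, which states
\[ D\Ext^2_A(K,K) \cong \ci/(\ci\rad(H) + \rad(H)\ci).\]
Under the hypothesis \eqref{E:LongRelations}, both summands in the denominator vanish, so the quotient degenerates to $\ci$ itself, giving the required isomorphism. No further structural input is needed.

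For the final sentence, I would verify that the cube-nilpotency condition $\rad(H)^3 = 0$ implies \eqref{E:LongRelations}. Since $\ci$ is assumed admissible, we have $\ci \subset \rad(H)^2$ by definition. Consequently
\[ \ci\rad(H) \subset \rad(H)^2\rad(H) = \rad(H)^3 = 0,\]
and symmetrically $\rad(H)\ci \subset \rad(H)\cdot\rad(H)^2 = \rad(H)^3 = 0$. So the hypothesis of the first part is satisfied.

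There is no real obstacle here; the corollary is essentially a specialisation of the already-proved Theorem~\ref{thm:generate_ideal}. The only subtlety worth flagging is that one must invoke admissibility of $\ci$ (which is a standing hypothesis in Theorem~\ref{thm:generate_ideal}) when passing from $\rad(H)^3=0$ to \eqref{E:LongRelations}; without $\ci\subset\rad(H)^2$ the deduction would fail.
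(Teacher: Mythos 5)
Your proof is correct and is exactly the argument the paper intends (the corollary is stated in the paper without a separate proof, as an immediate consequence of the isomorphism \eqref{E:Ext2} in Theorem~\ref{thm:generate_ideal}). You rightly flag that admissibility ($\ci \subset \rad(H)^2$) is the needed input when deducing \eqref{E:LongRelations} from $\rad(H)^3=0$.
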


\begin{thm}Let $A$ be a finite dimensional hereditary algebra over a perfect field $k$.
Then 
\[ A\isom T_KE\isom k\csmult_A\]
is isomorphic to a tensor algebra. 
\end{thm}
\begin{proof}We use Proposition~\ref{P:SpeciesWithRel} to write
$A$ a a quotient of $H=k\csmult_A$ by an admissible ideal $\ci$, i.e., $\ci\subset\rr_H^2$. As
$A$ is hereditary, the $n=1$ case of Theorem~\ref{T:Bongartz} gives
\[ \ci=\ci\rr_H+\rr_H\ci.\]
This implies $\ci\subset\rr_H^3$. Iteratively, we see that
$\ci\subset\rr_H^n$ for all $n\geq 1$. However,
\[ \bigcap_{n\geq 1}\rr_H^n=0\]
in the graded ring $T_KE$. Hence $\ci=0$.
\end{proof}

\begin{cor} A finite dimensional algebra over a perfect field is hereditary if and only if it is the path algebra of a species with multiplicities whose quiver is finite and does not contain any cycles.
\end{cor}
\begin{proof}All path algebras are hereditary. They are finite dimensional if and only if the quiver does not contain any cycles.
\end{proof}

\begin{rem}
The corollary is false for infinite dimensional hereditary algebras like formal power series rings $k\llbracket x \rrbracket$ over a field $k$.

Moreover, if $k$ is not perfect, there are finite dimensional hereditary algebras that are not tensor algebras -- they are quotients of a tensor algebra by a non-admissible ideal. This phenomenon seems to be first observed by Dlab \& Ringel, cf. e.g.~\cite[Section 5]{Berg}. 
\end{rem}

\subsection{Higher global dimension}

The following `generalises' Corollary~\ref{C:Hereditary}.
\begin{cor} Let $H$ be a finite dimensional hereditary algebra and $\ci\subset\rad(H)^2$ an admissible ideal.
	If $\rad(H)^{n+1} =0$, then $\gldim A \leq n$.
\end{cor}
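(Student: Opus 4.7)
The plan is to reduce the claim $\gldim A\leq n$ to the vanishing of the diagonal Ext groups $\Ext^m_A(K,K)$ for $m\geq n+1$, and then to read off this vanishing from Theorem~\ref{T:Bongartz}. Since $A$ is a finite dimensional (hence Artinian) $k$-algebra with semisimple quotient $K=A/\rad(A)$, a standard fact about Artin algebras gives
\[
\gldim A=\mathrm{pd}_A(K)=\sup\bigl\{m\geq 0\,:\,\Ext^m_A(K,K)\neq 0\bigr\}.
\]
Thus it is enough to prove $\Ext^m_A(K,K)=0$ for every $m\geq n+1$. First I would recall this reduction in a sentence (using that a minimal projective resolution of $K$ terminates at $m$ exactly when $\Ext^m(K,K)$ first vanishes), so that the rest of the argument becomes purely a computation with two-sided ideals of $H$.

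Next, I would invoke Theorem~\ref{T:Bongartz}, which applies because $H$ is hereditary, finite dimensional, and $\ci$ is admissible so that $\rr_H=\rad(H)$. The theorem realises $D\Ext^m_A(K,K)$ as a subquotient of certain products of $\ci$ and $\rr_H=\rad(H)$. Concretely, for $m=2l$ the relevant Ext group is a subquotient of $\ci^{l}$, and for $m=2l+1$ it is a subquotient of $\rr_H\ci^{l}\cap\ci^l\rr_H\subset\rr_H\ci^l$. Since $\ci\subset\rad(H)^2$ by admissibility, one has the a priori inclusions
\[
\ci^l\subset\rad(H)^{2l}\quad\text{and}\quad \rad(H)\,\ci^l\subset\rad(H)^{2l+1},
\]
i.e., in both parities the numerator in the formula of Theorem~\ref{T:Bongartz} lies inside $\rad(H)^m$.

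Finally, I would use the hypothesis $\rad(H)^{n+1}=0$: for any $m\geq n+1$ we get $\rad(H)^m=0$, so the numerator, and therefore $\Ext^m_A(K,K)$, vanishes. This gives $\Ext^m_A(K,K)=0$ for all $m\geq n+1$ and hence $\gldim A\leq n$ by the opening reduction. Note that the $n=1$ case recovers Corollary~\ref{C:Hereditary}, since then $\rad(H)^2=0$ forces $\ci\subset\rad(H)^2=0$, and the $n=2$ case is exactly the situation of Corollary~\ref{C:LongRelations}.

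There is no real obstacle here: the entire content is the power-counting $\rad(H)\cdot\ci^l\subset\rad(H)^{2l+1}$ and $\ci^l\subset\rad(H)^{2l}$, combined with the nilpotency hypothesis. The only minor point to double-check is the reduction $\gldim A=\sup_m\Ext^m_A(K,K)\neq 0$, which is standard for finite dimensional algebras but should be cited (e.g.\ via minimal projective resolutions of simple modules) so that the argument is self-contained.
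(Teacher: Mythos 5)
Your proof is correct and follows the same route as the paper: reduce to $\Ext^m_A(K,K)=0$ for $m\geq n+1$, then note that the numerators in Theorem~\ref{T:Bongartz} satisfy $\ci^l\subset\rad(H)^{2l}$ and $\rad(H)\ci^l\subset\rad(H)^{2l+1}$, which vanish under the hypothesis $\rad(H)^{n+1}=0$. The only difference is that you spell out the standard reduction $\gldim A=\mathrm{pd}_A(K)$, which the paper takes for granted.
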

\begin{proof} We have $\rad(H)=\rr_H$.
It suffices to show that $\Ext^{j}_A(K, K)=0$ for all $j>n$. Using $\ci \subset \mathfrak{r}_H^2$ and $\mathfrak{r}_H^{n+1} =0$, shows 
\begin{align*}
\ci^l \cap \mathfrak{r}_H\ci^{l-1}\mathfrak{r}_H &\subset \mathfrak{r}^{2l}_H  = 0, \quad \text{ for } l \geq \frac{n+1}{2} \\
\mathfrak{r}_H\ci^l \cap \ci^{l}\mathfrak{r}_H &\subset \mathfrak{r}^{2l+1}_H  = 0, \quad \text{ for } l \geq \frac{n}{2}.
\end{align*}
Now the formulas in Theorem~\ref{T:Bongartz}
complete the proof.
\end{proof}

\section{Dimension formulas for algebras}\label{sec:dim}
In this section, we translate the structural results into dimension formulas.

\subsection{Notation}\label{ssec:not_formulas}
We collect the notation introduced earlier for the convenience of the reader.
Throughout, $k$ is a perfect field and $A$ is a finite dimensional $k$-algebra.
\begin{itemize}
\item Let $S_1,\dots,S_n$ be the simple objects of $\modules{A}$;
\item  For $i=1,\dots, n$ let $m_i$ be the multiplicity of the projective cover $P_i$ of $S_i$ in the direct sum decomposition of $_{A}A$, see 
Section~\ref{ssec:morita}, Equation \eqref{E:decomp};
\item let $D_i=\End_A(S_i)$,
\item let $\Sh_A=(D_i, D\Ext^1_A(S_j,S_i))_{i,j\in\{1,\dots,n\}}$ be the species of $\modules{A}$, see Definition~\ref{defn:species_of_c} and let $\csmult_A=(\End_A(S_i^{m_i}), D\Ext^1_A(S_j^{m_j},S_i^{m_i})$ be the species with multiplicities of $A$, cf.~Definition~\ref{defn:csmult_A};
\item $Q_A$ the quiver  of the species, see Definition~\ref{defn:q_from_cs}, so it has an edge from $j$ to $i$ if and only if $D\Ext^1_A(S_j,S_i)$ is non-zero.
\item for each path $\gamma:v\pfad w$ we have $E(\gamma)$ of Definition~\ref{defn:q_from_cs};
\item for each pair $i,j$ we have $E(ij)=\bigoplus_{\gamma:i\pfad j}E(\gamma)$.
\end{itemize}

\begin{defn}\label{defn:delta}
We put
\begin{align*}
\delta(A)&=\dim_k A,\\
\delta_A(ij)&=\dim_k\Hom_A(P_i,P_j),\\
\delta_A(\gamma)&=\dim_k E(\gamma).\\
\end{align*}
\end{defn}

\begin{rem}\label{R:IndofMoritaandrelations}
The definitions of $\delta_A(ij)$ and $\delta_A(\gamma)$ do not take the multiplicities into account. In particular, they coincide for $A$ and the associated basic algebra $B$. Moreover, $\delta_A(\gamma)=\delta_{k\Sh_A}(\gamma)$ does not see the relations in $A$. 
\end{rem}

\subsection{The hereditary case}
The dimension formulas are easier in the hereditary case. This also implies upper bounds in general.
\begin{prop}\label{L:dimPathAlg}
Let $k$ be perfect, $A$ a finite dimensional $k$-algebra. Then
\begin{subequations}
\begin{align} 
\delta(A)&=\sum_{i,j=1}^n m_im_j\delta_A(ij),\\ 
\delta_A(ij)&\leq \sum_{\gamma:i\pfad j} \delta_A(\gamma), \label{eq:ij} \\ 
\delta_A(\gamma)&=\begin{cases}
 \dim_k D_v&\gamma:v\pfad v\text{ of length $0$}\\
\frac{\prod\limits_{\text{$\varepsilon:v\to w$ on $\gamma$}}\dim_k \Ext^1_A(S_v,S_w)}{\prod\limits_{v\in\gammacirc} \dim_k D_v} &|\gamma|\geq 1\label{eq:gamma}
\end{cases}
\end{align}
\end{subequations}
(where $\gammacirc$ is the set of vertices on the path which are different from the end points)
with equality if and only if $A$ is hereditary.
\end{prop}

\begin{proof} 

As in \eqref{E:decomp},
\[ {}_AA \cong \bigoplus_{i=1}^n P_i^{\oplus m_i}\]
and  
\[ A\isom\End_A\left(\bigoplus_{i=1}^nP_i^{\oplus m_i}\right)\op.\]
The formula for $\delta(A)$ follows by additivity.

As in Proposition~\ref{P:SpeciesWithRel}, 
there is hereditary algebra $H$ and an ideal $\ci\subset\rr_H^2$ such that $A\isom H/\ci$; in fact {$H=k\csmult_A$}. We may view $\modules{A}$ as a full subcategory of
$\modules{H}$. Let $P_i^H$ be the projective cover of $S_i$ in $\modules{H}$. We fix a surjection $\pi_i:P_i^H\to P_i$. As $P_i^H$ is projective, the dotted arrow in the diagram
\[\xymatrix{
  P_i^H\ar@{.>}[r]\ar[d] &P_j^H\ar[d]\\
  P_i \ar[r]^f &P_j
}\]
can always be filled in. This implies
\begin{align}\label{E:IneqDelta}
 \delta_A(ij)=\dim_k\Hom_A(P_i,P_j)\leq \dim_k\Hom_H(P_i^H,P_j^H)=\delta_H(ij).
\end{align} 
Note also that having equalities in \eqref{E:IneqDelta} for all $i,j$ implies that $\dim_kH=\dim_kA$, so that $A=H$ is hereditary.

To show 
\begin{align}\label{E:delta_H(ij)}
\delta_H(ij)=\sum_{\gamma:i\pfad j}  \delta_A(\gamma)
\end{align} 
 and thus finishing the proof of \eqref{eq:ij}, it suffices to assume that $A=k\Sh_A$ is basic and hereditary, cf. Remark \ref{R:IndofMoritaandrelations}.
By the description of $P_i$ inside of $A$ in Section~\ref{ssec:morita}
\[ P_i=A e_i =\bigoplus_{w, \gamma:i\pfad w}E(\gamma).\]
Comparison of \eqref{Eq:Eij} with Section~\ref{ssec:morita}
 gives
\[ \Hom_A(P_i,P_j)= \Hom_A(k\Sh_A e_i, k\Sh_A e_j)= e_j k\Sh_A e_i=  E(ij).\] 
This implies \eqref{E:delta_H(ij)}. 

By Remark \ref{R:IndofMoritaandrelations}, Formula \eqref{eq:gamma} follows from the definition of $E(\gamma)$ and
\[ \dim_k M\tensor_D N=\frac{\dim_k M\cdot \dim_k N}{\dim_k D}\]
for a skew field $D$ over $k$, a $D$-module $N$ and a $D\op$-module $M$. 
\end{proof}

There is a completely different, much easier upper bound.
\begin{lemma}\label{L:trivial_bound}
Let $V\in\modules{A}$ such that $\langle V\rangle=\modules{A}$. Then
\[ \delta(A)\leq (\dim_k V)^2.\]
Let $\tilde{e}_i,\tilde{e}_j\in A$ be the idempotents projecting to
$P_i^{m_i}$ and $P_j^{m_j}$, respectively. Then
\[ m_im_j\delta_A(ij)\leq \dim_k \tilde{e}_i V\dim_k\tilde{e}_jV.\]
\end{lemma}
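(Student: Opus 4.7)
The plan is to apply Lemma~\ref{L:DimFromGen} once we observe that the hypothesis $\langle V\rangle = \modules{A}$ forces the relation ideal $\Ih := \ker(\varphi \colon A \to \End_k(V))$ to be zero, i.e. that $V$ is a faithful $A$-module. Indeed, $\langle V \rangle = \modules{A}$ means in particular that the regular module ${}_AA$ itself lies in $\langle V\rangle$. By Remark~\ref{R:relations} every object in $\langle V\rangle$ satisfies all relations in $\Ih$, so in particular $\Ih \cdot A = 0$; since $1 \in A$, this forces $\Ih = 0$.

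For the second inequality I would invoke the inequality \eqref{E:ineq} of Lemma~\ref{L:DimFromGen} with $e = \tilde{e}_i$ and $e' = \tilde{e}_j$. Since $\Ih = 0$, this reads
\[
\dim_k \tilde{e}_i V \cdot \dim_k \tilde{e}_j V \;\geq\; \dim_k \tilde{e}_i A \tilde{e}_j.
\]
The right-hand side can be identified via the standard isomorphism $\Hom_A(Ae, Ae') \cong eAe'$ for left $A$-modules. By construction $A\tilde{e}_i \cong P_i^{\oplus m_i}$ and $A\tilde{e}_j \cong P_j^{\oplus m_j}$, so
\[
\dim_k \tilde{e}_i A \tilde{e}_j \;=\; \dim_k \Hom_A(P_i^{\oplus m_i}, P_j^{\oplus m_j}) \;=\; m_i m_j\, \dim_k \Hom_A(P_i, P_j) \;=\; m_i m_j\, \delta_A(ij),
\]
which gives the claimed bound.

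For the first inequality there are two equally short routes. Directly: the injectivity of $\varphi$ gives $\dim_k A \leq \dim_k \End_k(V) = (\dim_k V)^2$. Alternatively, one may sum the second inequality over $i,j$: by Proposition~\ref{L:dimPathAlg} the left side sums to $\delta(A)$, and since $1 = \sum_i \tilde{e}_i$ in $A$ the decomposition $V = \bigoplus_i \tilde{e}_i V$ yields $\sum_i \dim_k \tilde{e}_i V = \dim_k V$, so the right side sums to $(\dim_k V)^2$.

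There is no real obstacle: the lemma is essentially a repackaging of Lemma~\ref{L:DimFromGen} in the special case $\Ih = 0$. The only point requiring a brief argument is the faithfulness of $V$, which is immediate from $A \in \langle V\rangle$ together with Remark~\ref{R:relations}.
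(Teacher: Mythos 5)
Your proof is correct and follows essentially the same route as the paper: both reduce to Lemma~\ref{L:DimFromGen} after observing that $\langle V\rangle=\modules{A}$ forces $\Ih=0$, identify $\tilde e_iA\tilde e_j$ with $\Hom_A(P_i^{m_i},P_j^{m_j})$, and obtain the first bound from the idempotent $1$. You fill in the faithfulness argument (via Remark~\ref{R:relations} and ${}_AA\in\langle V\rangle$) that the paper leaves implicit, which is a welcome addition.
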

\begin{proof}Apply Lemma~\ref{L:DimFromGen} to the algebra $A$. Note that the
$\Ih=\ker(A\to \End_k(V))$ vanishes in this case. The bound for
$\delta(A)$ follows with the idempotent $1$ in Lemma~\ref{L:DimFromGen}.

We have
\[ m_im_j\delta_A(ij)=\dim_k\Hom_A(P_i^{m_i},P_j^{m_j})=\dim_k\tilde{e}_iA\tilde{e}_j.\]
The upper bound is the one given in Lemma~\ref{L:DimFromGen}.
\end{proof}

\subsection{Refined dimension formulas}\label{ssec:refined}
We keep the general set-up and notation, but make assumptions on the global dimension.

\begin{cor}\label{c: lengthatmost2}Let $A$ be a finite dimensional algebra such that the maximal length of paths in $Q_A$ is at most $2$. Then
\[ \delta_A(ij)=\delta_{k\Sh_A}(ij)-m_im_j\dim_k\Ext^2_A(S_j,S_i)\]
and
\begin{align*} \delta(A)&=\sum_i m_i^2\dim_kD_i+\\
                       &+\sum_{i,j}m_im_j\dim_k\Ext^1_A(S_j,S_i)-\sum_{i,j}m_im_j\dim_k\Ext^2_A(S_j,S_i)\\
                       &+\sum_{a,b,c}m_a m_c\frac{\dim_k\Ext^1_A(S_b,S_a)\dim_k\Ext^1_A(S_c,S_b)}{\dim_k D_b}
\end{align*}
\end{cor}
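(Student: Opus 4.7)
I would first reduce to the basic case. By Lemma~\ref{lem:Morita} and Corollary~\ref{cor:basic}, if $B$ denotes the basic algebra Morita equivalent to $A$, then $\delta_A(ij)=\delta_B(ij)$, the simples and multiplicities $m_i$ agree, the species satisfy $\Sh_A=\Sh_B$, and all $\Ext$ groups between simples match. The second formula will follow from the first via $\delta(A)=\sum_{i,j}m_im_j\delta_A(ij)$ in Proposition~\ref{L:dimPathAlg}, so it suffices to prove the first formula when $A$ is basic.

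Now assume $A$ basic and, by Proposition~\ref{P:SpeciesWithRel}, write $A\cong H/\ci$ with $H=k\Sh_A$ hereditary and $\ci\subset\rad(H)^2$ admissible. The hypothesis that the maximal length of a path in $Q_A$ equals $2$ means that $\rad(H)^3=0$, since $\rad(H)$ is spanned by paths of positive length in $k\Sh_A$. Hence $\ci\cdot\rad(H)=\rad(H)\cdot\ci=0$, so Corollary~\ref{C:LongRelations} yields an isomorphism of $K-K$-bimodules
\[
\ci\cong D\Ext^2_A(K,K).
\]

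Multiplying the short exact sequence $0\to\ci\to H\to A\to 0$ by primitive idempotents $e_j$ on the left and $e_i$ on the right, and using the identification $\Hom_A(P_i,P_j)\cong e_jAe_i$ from Proposition~\ref{L:dimPathAlg}, I would obtain
\[
\delta_A(ij)=\dim_ke_jHe_i-\dim_ke_j\ci e_i=\delta_{k\Sh_A}(ij)-\dim_ke_j\ci e_i.
\]
The bimodule decomposition $\Ext^2_A(K,K)=\bigoplus_{a,b}\Ext^2_A(S_a,S_b)$ together with the standard identity $e_jD(M)e_i\cong D(e_iMe_j)$ for a $K-K$-bimodule $M$ gives $e_j\ci e_i\cong D\Ext^2_A(S_j,S_i)$, of $k$-dimension $\dim_k\Ext^2_A(S_j,S_i)$. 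This proves the first formula.

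For the second formula, I would substitute the first into $\delta(A)=\sum_{i,j}m_im_j\delta_A(ij)$ and expand $\delta_{k\Sh_A}(ij)=\sum_{\gamma:i\pfad j}\delta_{k\Sh_A}(\gamma)$ using the hereditary part of Proposition~\ref{L:dimPathAlg}. Since paths have length at most $2$, the sum decomposes into three pieces: length-$0$ paths give $\sum_im_i^2\dim_kD_i$; length-$1$ paths give $\sum_{i,j}m_im_j\dim_k\Ext^1_A(S_j,S_i)$ after relabeling; length-$2$ paths through a middle vertex $b$ give the triple sum indexed by $(a,b,c)$ via the tensor product formula $\delta_{k\Sh_A}(\gamma)=\dim_k\Ext^1_A(S_b,S_a)\dim_k\Ext^1_A(S_c,S_b)/\dim_kD_b$. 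The main technical subtlety throughout is tracking the $K-K$-bimodule structure of $D\Ext^2_A(K,K)$ and aligning the path-direction conventions so that the Ext indices match; once these conventions are fixed, the rest is routine bookkeeping.
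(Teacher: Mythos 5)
Your proof follows the same route as the paper's one-paragraph proof: reduce to the basic case by Morita theory, invoke Corollary~\ref{C:LongRelations} using $\rad(k\Sh_A)^3=0$ to identify $\ci\cong D\Ext^2_A(K,K)$, and expand $\delta_{k\Sh_A}(ij)$ by paths of length $0$, $1$ and $2$ via Proposition~\ref{L:dimPathAlg}. The argument is correct and considerably more detailed than the paper's.

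One observation worth flagging rather than glossing over: your Morita reduction in fact shows that the factor $m_im_j$ in the corollary's first displayed formula is a typo. Both $\delta_A(ij)=\dim_k\Hom_A(P_i,P_j)$ and $\dim_k\Ext^2_A(S_j,S_i)$ are Morita-invariant (they agree for $A$ and its basic algebra $B$, whereas the multiplicities become $1$ for $B$), so the two sides cannot depend on the $m_i$'s. What you actually derive, namely $\delta_A(ij)=\delta_{k\Sh_A}(ij)-\dim_k\Ext^2_A(S_j,S_i)$, is the correct version; it is also precisely the version needed to produce a single factor of $m_im_j$ in the $\Ext^2$ term of the displayed formula for $\delta(A)$ after multiplying by $m_im_j$ and summing. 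You should note the discrepancy rather than asserting that the first formula has been proved verbatim as stated.
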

Note that many terms in these sums vanish.
\begin{proof} The assumption implies $\rr^3=0$ where $\rr$ is the radical of
$k\Sh_A$. By Corollary~\ref{C:LongRelations}  
the basic algebra $B$ corresponding to $A$ takes the form
\[ B\isom k\Sh_A/\ci\]
where $\ci\isom D\Ext^2_B(K,K)\isom\bigoplus_{i,j}D\Ext^2_B(S_j,S_i)$.  

Combining the description of $\delta_A(ij)$ with the formulas in Proposition~\ref{L:dimPathAlg} for $k\Sh_A$, we get the formula for $\delta(A)$.
\end{proof}

The following result generalises the above formula to higher cohomological dimension. However, this estimate can become arbitrarily bad, cf. Example~\ref{Ex:dimEstimateLongerPaths} below.

\begin{cor}\label{C:IE} Let $A$ be a finite dimensional algebra, $K=A/\rad(A)$, $H=k\csmult_A$. For all $n \geq 2$, we have the following estimate
\begin{align}
\dim_k A \leq \dim_k H - \dim_k \Ext^2_A(K, K) - \ldots - \dim_k \Ext^n_A(K, K)
\end{align}
\end{cor}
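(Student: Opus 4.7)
Since $B$ is basic, by Proposition \ref{P:SpeciesWithRel} we may write $B = H/\ci$ with $\ci \subset \mathfrak{r}_H^2 = \rad(H)^2$ admissible. Here we may assume $H$ is finite dimensional: otherwise $\dim_k H = \infty$ and the inequality is trivially true. Hence $\mathfrak{r} := \mathfrak{r}_H = \rad(H)$ is nilpotent. The claim is equivalent to
\[ \dim_k \ci \;\geq\; \sum_{i=2}^{n} \dim_k \Ext^i_B(K,K). \]
The strategy is to construct a decreasing filtration of $\ci$ whose successive quotients are alternately the duals of the relevant $\Ext$-groups (supplied by Theorem \ref{T:Bongartz}) and ``filler'' pieces, and then to keep only the $\Ext$-pieces.

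Motivated directly by the formulas in Theorem \ref{T:Bongartz}, set for $m \geq 1$
\[ F_{2m} := \ci^m \cap \mathfrak{r}\ci^{m-1}\mathfrak{r}, \qquad G_{2m} := \ci^m\mathfrak{r} + \mathfrak{r}\ci^m, \]
and for $m \geq 0$
\[ F_{2m+1} := \mathfrak{r}\ci^m \cap \ci^m\mathfrak{r}, \qquad G_{2m+1} := \ci^{m+1} + \mathfrak{r}\ci^m\mathfrak{r}. \]
Since $\ci \subset \mathfrak{r}^2$ we have $F_2 = \ci$, and Theorem \ref{T:Bongartz} gives $F_i/G_i \cong D\Ext^i_B(K,K)$ for every $i \geq 2$.

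The heart of the argument is to verify the chain of inclusions
\[ \ci = F_2 \supseteq G_2 \supseteq F_3 \supseteq G_3 \supseteq F_4 \supseteq G_4 \supseteq \cdots \]
The inclusions $G_i \supseteq F_{i+1}$ are automatic: in each case $F_{i+1}$ is an intersection of the two summands of $G_i$. The inclusions $F_i \supseteq G_i$ reduce to the following routine observations, which use only that $\ci$ is a two-sided ideal and $\ci \subseteq \mathfrak{r}$: first, $\ci^{m}\mathfrak{r}$ and $\mathfrak{r}\ci^m$ each lie in $\ci^m$ and in $\mathfrak{r}\ci^{m-1}\mathfrak{r}$ (using $\ci \subset \mathfrak{r}$ and $\mathfrak{r}\ci \subseteq \ci$); second, $\ci^{m+1}$ and $\mathfrak{r}\ci^m\mathfrak{r}$ each lie in $\mathfrak{r}\ci^m \cap \ci^m\mathfrak{r}$. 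Nilpotency of $\mathfrak{r}$ guarantees the filtration eventually becomes $0$.

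Once the filtration is in place, the corollary follows by additivity of dimension: summing over all graded pieces,
\[ \dim_k \ci = \sum_{i \geq 2} \dim_k(F_i/G_i) + \sum_{i \geq 2} \dim_k(G_i/F_{i+1}) \;\geq\; \sum_{i=2}^{n} \dim_k(F_i/G_i) = \sum_{i=2}^{n} \dim_k \Ext^i_B(K,K), \]
which combined with $\dim_k B = \dim_k H - \dim_k \ci$ yields the claim. The only mildly delicate step is the bookkeeping in the second paragraph---checking all the inclusions in the filtration---but no step requires a new idea beyond the Bongartz-type formulas already established in Theorem \ref{T:Bongartz}.
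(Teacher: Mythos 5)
Correct, and essentially the same argument as the paper: your filtration $\ci = F_2 \supseteq G_2 \supseteq F_3 \supseteq G_3 \supseteq \cdots$ is precisely the chain of inclusions the paper exploits, only you phrase the estimate via dimensions of graded pieces whereas the paper phrases it as a telescoping sum of signed dimensions with each cross term $\dim_k F_{i+1} - \dim_k G_i \leq 0$. The only cosmetic addition is your explicit reverification of $F_i \supseteq G_i$, which is already implicit in Theorem~\ref{T:Bongartz}.
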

\begin{proof}
Since $A$ is finite dimensional, the $\Ext^i_A(K, K)$ are finite dimensional. We can assume that $\dim_k H$ is finite, for otherwise the inequality holds trivially. As before $A\isom k\csmult_A/\ci$. Because of equality 
\[
\dim_k A = \dim_k H -\dim_k \ci
\]
it suffices to show
\begin{align} \label{IE:I}
\dim_k \ci \geq \dim_k \Ext^2_A(K, K) + \ldots + \dim_k \Ext^n_A(K, K)
\end{align}
By Theorem~\ref{T:Bongartz},
we have
\begin{align*}
\dim_k \Ext^{2l}_A(K, K) = \dim_k(\ci^l \cap \mathfrak{r}_H\ci^{l-1}\mathfrak{r}_H)  - \dim_k(\ci^l \mathfrak{r}_H +\mathfrak{r}_H\ci^l)\\
\dim_k \Ext^{2l+1}_A(K, K)=\dim_k(\mathfrak{r}_H\ci^l \cap \ci^{l}\mathfrak{r}_H) - \dim_k(\ci^{l+1} +\mathfrak{r}_H\ci^l\mathfrak{r}_H), 
\end{align*}
showing that the right hand side of \eqref{IE:I} equals (assuming that $n=2l+1$ is odd---the case that $n$ is even is analogous)
\begin{align}
&\dim_k \ci \\
&- \dim_k(\ci \mathfrak{r}_H +\mathfrak{r}_H\ci) + \dim_k(\ci \mathfrak{r}_H \cap \mathfrak{r}_H\ci) \label{E:L2} \\
	       &- \dim_k(\ci^{2} +\mathfrak{r}_H\ci\mathfrak{r}_H) + \dim_k(\ci^2 \cap \mathfrak{r}_H\ci\mathfrak{r}_H) \\
	       & \cdots \\
	       &   - \dim_k(\ci^l \mathfrak{r}_H +\mathfrak{r}_H\ci^l) + \dim_k(\ci^l \mathfrak{r}_H \cap \mathfrak{r}_H\ci^{l}) \\
	        &- \dim_k(\ci^{l+1} +\mathfrak{r}_H\ci^l\mathfrak{r}_H). \label{E:Ll}
	     \end{align}
All the contributions in \eqref{E:L2} -- \eqref{E:Ll} are $\leq 0$ -- indeed,  
\begin{align*}
\ci \mathfrak{r}_H +\mathfrak{r}_H\ci &\supseteq \ci \mathfrak{r}_H \cap \mathfrak{r}_H\ci \\
\ci^{2} +\mathfrak{r}_H\ci\mathfrak{r}_H &\supseteq \ci^2 \cap \mathfrak{r}_H\ci\mathfrak{r}_H \\
&\cdots
\end{align*}
This completes the proof.
\end{proof}

\begin{ex} \label{Ex:dimEstimateLongerPaths}
Let $Q$ be the following quiver
\begin{equation*}\begin{tikzpicture}[description/.style={fill=white,inner sep=2pt}]
   \matrix (m) [matrix of math nodes, row sep=3em,
                 column sep=2.5em, text height=1.5ex, text depth=0.25ex,
                 inner sep=2pt, nodes={inner xsep=0.3333em, inner
ysep=0.3333em}] at (0, 0)
    {  1&2&3& \cdots &n. \\};
 \path[->] ($(m-1-1.east)$)   edge  node [fill=white, scale=0.75] [midway] {$a_1$}  ($(m-1-2.west)$) ;
 \path[->] ($(m-1-2.east)$)  edge  node [fill=white, scale=0.75] [midway] {$a_2$}  ($(m-1-3.west)$) ;
 \path[->] ($(m-1-3.east)$)  edge  ($(m-1-4.west)$) ;
 \path[->] ($(m-1-4.east)$)  edge  ($(m-1-5.west)$) ;
 \end{tikzpicture}\end{equation*}
 \begin{enumerate}
 \item Let $A=kQ/(a_2a_1)$. We can use the formulas of Theorem~\ref{T:Bongartz} to compute the dimensions of the $\Ext$-groups. The left and right hand sides of Corollary \ref{C:IE} are   
 \[
 \dim_k A= \dim_k kQ - \dim_k (a_2a_1) = {\binom{n+1}{2}} - (n-2) 
\]
and
\begin{multline*}
 \dim_k kQ - \sum_{l=2}^m \dim_k \Ext^l_A(K, K) \\
=  \dim_k kQ - \dim_k \Ext^2_A(K, K) = {\binom{n+1}{2}} - 1. 
 \end{multline*}
 This shows that their difference can become arbitrarily large.\\

 \item Let $B=kQ/(\rad kQ)^2$. This algebra has global dimension $n-1$ and we get an equality in Corollary \ref{C:IE}:
  \begin{align*}
 \dim_k B&= 2n -1 = {\binom{n+1}{2}} - \sum_{l=2}^{n-1} (n-l) \\ &=\dim_k kQ - \sum_{l=2}^{n-1} \dim_k \Ext^l_B(K, K)
 \end{align*}
 The latter statement generalises to quivers $Q$ of the form 
 \begin{equation*}\begin{tikzpicture}[description/.style={fill=white,inner sep=2pt}]
   \matrix (m) [matrix of math nodes, row sep=3em,
                 column sep=2.5em, text height=1.5ex, text depth=0.25ex,
                 inner sep=2pt, nodes={inner xsep=0.3333em, inner
ysep=0.3333em}] at (0, 0)
    {  && 11 & \ldots & l_11\\
        && 12   & \ldots & l_22 \\
       0&&&\ldots &&&  1 \\
       && 1n & \ldots & l_nn \\
    };

 \path[->] ($(m-3-1.east)$)  edge   ($(m-1-3.west)$) ;
 \path[->] ($(m-3-1.east) $) edge   ($(m-2-3.west)$) ;
 \path[->] ($(m-3-1.east) $) edge   ($(m-3-4.west)$) ;
  \path[->] ($(m-3-1.east) $) edge   ($(m-4-3.west)$) ;
  
   \path[<-] ($(m-1-4.west)$)  edge   ($(m-1-3.east)$) ;
 \path[<-] ($(m-2-4.west) $) edge   ($(m-2-3.east)$) ;
  \path[<-] ($(m-4-4.west) $) edge   ($(m-4-3.east)$) ;
  
   \path[->] ($(m-1-4.east)$)  edge   ($(m-1-5.west)$) ;
 \path[->] ($(m-2-4.east) $) edge   ($(m-2-5.west)$) ;
  \path[->] ($(m-4-4.east) $) edge   ($(m-4-5.west)$) ;

 \path[->] ($(m-1-5.east)$)  edge   ($(m-3-7.west)+(0, 2mm)$) ;
 \path[->] ($(m-2-5.east) $) edge   ($(m-3-7.west)+(0, 1mm)$) ;
 \path[->] ($(m-3-4.east) $) edge    ($(m-3-7.west)$) ;
  \path[->] ($(m-4-5.east) $) edge   ($(m-3-7.west)+(0, -1mm)$) ;

 \draw[ ->] ($(m-3-1.east) + (0mm,0mm)$) .. controls +(2.5mm,-35mm) and
+(-2.5mm,-35mm) ..  ($(m-3-7.west) + (0mm,-2mm)$);
 \end{tikzpicture}\end{equation*}
 
 \end{enumerate}
\end{ex}

\section{Saturation}\label{sec3}

Throughout the chapter, we fix a category $\Ch$ with the following properties:
\begin{setup}\label{setup:artin}
Let $k$ be a perfect field and $\Ch$ an essentially small $k$-linear abelian category such that:
\begin{enumerate}
\item $\Ch$ is artinian and noetherian;
\item all $\Hom$-spaces are finite dimensional;
\item the set of simple objects of $\Ch$ admits a partial order such that
\[ \Ext^1(S,S')\neq 0 \text{\ only if\ }S<S';\]
\item $\Ch$ is hereditary, i.e. $\Ext^{>1}_\Ch(-, -)=0$. 
\end{enumerate}.
\end{setup}

Let $\Bh\subset\Ch$ be a full subcategory. We want to construct its \emph{hereditary closure} or \emph{saturation} $\Bh^\sat$, i.e., a minimal subcategory of $\Ch$ containing $\Bh$ which is in addition hereditary. 

\begin{thm}\label{thm:sat_exists}
Given $\Ch$ as  above and $\Bh\subset\Ch$ a full abelian subcategory closed under subquotients. Then there is a minimal full hereditary subcategory closed under subquotients $\Bh^\sat$
\[ \Bh\subset\Bh^\sat\subset\Ch\]
containing $\Bh$. 
\begin{enumerate}
\item \label{it:1thm} $\Bh$ and $\Bh^\sat$ have the same class of simple objects.
\item\label{it:2thm}
For all simple objects $S,T$ of $\Bh$, we have
\[ \Ext^1_\Bh(S,T)=\Ext^1_{\Bh^\sat}(S,T).\]
\item \label{it:3thm}The subcategory is maximal with these properties.
\end{enumerate}
\end{thm}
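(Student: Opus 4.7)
My plan is to take (\ref{it:3thm}) as the definition of $\Bh^\sat$ and then deduce all other assertions of the theorem from this description. Set $\Bh^\sat\subset\Ch$ to be the full subcategory of objects $X$ such that every subquotient of $X$ of length at most $2$ lies in $\Bh$. Closure under subquotients is then immediate, and since kernels and cokernels in $\Ch$ of morphisms between objects of $\Bh^\sat$ are themselves subquotients of such, $\Bh^\sat$ is abelian. For (\ref{it:1thm}), any simple of $\Bh$ satisfies the defining condition trivially, and conversely any simple of $\Bh^\sat$ is its own length-$1$ subquotient, hence lies in $\Bh$. For (\ref{it:2thm}), a length-$2$ extension of simples lies in $\Bh^\sat$ iff it lies in $\Bh$, directly from the defining condition applied to the middle term itself.

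\medskip

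The core of the argument is to show that $\Bh^\sat$ is hereditary. My strategy is to prove the stronger statement that $\Bh^\sat\subset\Ch$ is closed under extensions, which immediately gives $\Ext^2_{\Bh^\sat}=\Ext^2_\Ch=0$. Given a short exact sequence $0\to Y\to E\to X\to 0$ in $\Ch$ with $X,Y\in\Bh^\sat$, I would analyse an arbitrary length-$\leq 2$ subquotient $E'=A/B$ of $E$ by setting $A'=A\cap Y$, so that $E'$ fits into a short exact sequence
\[
0\to A'/(A'\cap B)\to E'\to A/(A'+B)\to 0
\]
whose outer terms are subquotients of $Y$ and $X$ respectively, hence again in $\Bh^\sat$. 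The non-trivial case is when both outer terms are simple, so that $E'$ is a length-$2$ extension of two simples of $\Bh$; here one must verify that its extension class already belongs to $\Ext^1_\Bh$. This verification is the main obstacle: the class arises from $[E]\in\Ext^1_\Ch(X,Y)$ through pullback and pushforward along subquotient operations, and controlling it rests on both the hereditariness of $\Ch$ and on the cycle-freeness of its species from Set-Up~\ref{setup:artin}, which prevents ``new'' extension classes from appearing. Once extension-closure is in hand, any Yoneda $2$-extension in $\Bh^\sat$ is a $2$-extension in the hereditary category $\Ch$, hence decomposable as a Yoneda product of two short exact sequences through the image of $E_1\to E_2$, which stays in $\Bh^\sat$ by closure under subquotients.

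\medskip

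For the minimality statement, let $\mathcal{D}\supset\Bh$ be any full abelian hereditary subcategory of $\Ch$ closed under subquotients; I would show $\Bh^\sat\subset\mathcal{D}$ by induction on length. Length-$\leq 2$ objects of $\Bh^\sat$ lie in $\Bh\subset\mathcal{D}$ by (\ref{it:2thm}). For $X\in\Bh^\sat$ of length $n>2$, choose a composition series $X_{n-2}\subset X_{n-1}\subset X$; by induction all proper subquotients of $X$ lie in $\mathcal{D}$, and in particular the short exact sequence $0\to X_{n-2}\to X_{n-1}\to S'\to 0$ lives in $\mathcal{D}$. Setting $S=X/X_{n-1}$ and applying $\Hom_\mathcal{D}(S,-)$, the hereditariness of $\mathcal{D}$ makes the connecting map $\Ext^1_\mathcal{D}(S,X_{n-1})\to\Ext^1_\mathcal{D}(S,S')$ surjective. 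The image of the extension class $[X]\in\Ext^1_\Ch(S,X_{n-1})$ in $\Ext^1_\Ch(S,S')$ corresponds to the length-$2$ subquotient $X/X_{n-2}$, which lies in $\Bh\subset\mathcal{D}$; a diagram chase comparing the long exact sequences in $\mathcal{D}$ and $\Ch$ shows that $[X]$ lifts to $\Ext^1_\mathcal{D}(S,X_{n-1})$ modulo a class coming from $\Ext^1(S,X_{n-2})$, at which point iterating the descent along the filtration gives $X\in\mathcal{D}$, completing the proof.
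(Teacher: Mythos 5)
Your structuring choice of taking (\ref{it:3thm}) as the definition is reasonable, but the central step---hereditariness of $\Bh^\sat$---rests on a claim that is not merely a "verification obstacle'' but is in fact false. You propose to prove "the stronger statement that $\Bh^\sat\subset\Ch$ is closed under extensions.'' If that held, then for any simples $S,T$ of $\Bh$ every $\Ch$-extension of $S$ by $T$ would have middle term in $\Bh^\sat$, so $\Ext^1_{\Bh^\sat}(S,T)=\Ext^1_\Ch(S,T)$; combined with your part (\ref{it:2thm}) this forces $\Ext^1_\Bh(S,T)=\Ext^1_\Ch(S,T)$ for all simples of $\Bh$, i.e.\ $\Bh$ already contains every first extension in $\Ch$ between its simples. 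That is false in general. Concretely, take $\Ch=\modules{kQ}$ for $Q$ the Kronecker quiver $1\rightrightarrows 2$ (this satisfies Set-Up~\ref{setup:artin}) and $\Bh=\langle E_\alpha\rangle$, where $E_\alpha$ is the length-two indecomposable supported on one of the two arrows. Then $\Bh$ is equivalent to $\modules{k(1\to 2)}$, hence already hereditary, so $\Bh^\sat=\Bh$; yet $\dim\Ext^1_\Ch(S_1,S_2)=2$ while $\dim\Ext^1_\Bh(S_1,S_2)=1$, and the other length-two indecomposable $E_\beta$ is an extension of $S_1\in\Bh^\sat$ by $S_2\in\Bh^\sat$ that does not lie in $\Bh^\sat$. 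In your notation this is already visible when $X$ and $Y$ are simple and $E'=E$ itself: the extension class of $E$ has no reason to lie in $\Ext^1_\Bh$, and hereditariness of $\Ch$ plus cycle-freeness of the species do not prevent this. No local patch of the argument can fix this, because the conclusion you are aiming for (extension-closure) is strictly stronger than hereditariness and genuinely fails.

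The minimality argument in your final paragraph also has an unjustified step: after the diagram chase you obtain $[X]=e'+\iota(g)$ with $e'$ coming from $\mathcal{D}$ and $g\in\Ext^1_\Ch(S,X_{n-2})$, but $g$ and its middle term need not be subquotients of $X$, so the inductive hypothesis gives no control on them; "iterating the descent along the filtration'' has not been set up as a well-founded recursion.

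The paper's proof takes a different route precisely to avoid these issues: it constructs $\Bh^\sat$ directly as the module category over the hereditary path algebra $k\Sh_\Bh$ of the species of $\Bh$ (Corollary~\ref{cor:Msat}), with the real work being Lemma~\ref{lem:last}, which realises $\modules{k\Sh_\Bh}$ as a full subcategory of $\Ch$ via the functorial presentations of Proposition~\ref{prop:choice} and a Mittag-Leffler/AR-zero argument on the limit system of ideals. The characterisation (\ref{it:3thm}) is recovered only at the end, by noting that the category so defined is the maximal subcategory with properties (\ref{it:1thm}) and (\ref{it:2thm}) and that it has the same species as $\Bh$, hence the same saturation.
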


\begin{rem}
The hereditary closure is \emph{not} canonical, see Example~\ref{ex:not_can} below, even if property \eqref{it:3thm} suggests otherwise. There is no global maximum in general. 
\end{rem}

\begin{rem}We have two applications for this abstract result. It allows us to reduce (conjecturally) questions about relations between period numbers to the hereditary case, see Theorem~\ref{thm:dim_formula} and Remark~\ref{rem:reduce}. This becomes unconditional in the cases of $1$-motives and Mixed Tate Motives. In the latter setting, we will apply it in Lemma~\ref{lem:construct_tate} in order to construct hereditary categories of Mixed Tate Motives with finite dimensial $\Ext$-groups. This recovers a construction of Deligne--Goncharov in \cite{deligne-goncharov}, who used Tannakian methods.
\end{rem}

The proof will be given towards the end of the chapter.
Recall from Definition~\ref{defn:str_finitary} the notion of a strongly finitary category.

\begin{lemma}\label{lem:limits}
Let $\Ch$ as above. There exists a directed set $I$ and
a direct system $\Ch_i$ for $i\in I$ of strongly finitary full subcategories of $\Ch$ closed under subquotients such that 
 $\Ch=\varinjlim_{i\in I} \Ch_i$.

Let $S,T$ be simple objects of $\Ch$. Then
\[ \varinjlim_{i\in I}\Ext^1_{\Ch_i}(S,T)=\Ext^1_\Ch(S,T)\]
with injective transition maps. Moreover,
\[ \varinjlim_{i\in I}\Ext^2_{\Ch_i}(S,T)=0.\]
\end{lemma}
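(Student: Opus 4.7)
The plan is to let $I$ be the directed set of finite subsets of (isomorphism classes of) objects of $\Ch$, available because $\Ch$ is essentially small, ordered by inclusion. For $i=\{X_1,\dots,X_n\}\in I$ I set $\Ch_i:=\langle X_1\oplus\cdots\oplus X_n\rangle$. Corollary~\ref{cor:is_str_finitary} shows each $\Ch_i$ is strongly finitary, and closure under subquotients holds by construction. Every object $X$ lies in $\Ch_{\{X\}}$, so $\Ch=\varinjlim_{i\in I}\Ch_i$ as required.

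For the $\Ext^1$ statement I will exploit that every inclusion $\Ch_i\subset\Ch_j\subset\Ch$ is a fully faithful exact embedding. Given a class $[E]\in\Ext^1_{\Ch_i}(S,T)$ whose image in a larger category is zero, a splitting $s\colon S\to E$ is a morphism in the larger $\Hom$-space, which by fullness equals $\Hom_{\Ch_i}(S,E)$; hence the class was already zero in $\Ch_i$. The analogous argument applied to a morphism of extensions shows that equal images in the larger category come from equal classes in $\Ch_i$, giving injectivity of every transition map and of the map into $\Ext^1_\Ch(S,T)$. Surjectivity onto $\Ext^1_\Ch(S,T)$ is immediate since any extension $0\to T\to E\to S\to 0$ is already represented in $\Ch_{\{S,T,E\}}$.

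For $\varinjlim\Ext^2_{\Ch_i}(S,T)=0$ I must show that any 2-extension $\xi\colon 0\to T\to E_1\xrightarrow{d}E_0\to S\to 0$ in some $\Ch_i$ is trivialized in a suitable $\Ch_j\supseteq\Ch_i$. Set $Z=\operatorname{im}(d)\in\Ch_i$ and apply the long exact sequence of $\Ext_\Ch(-,T)$ to $0\to Z\to E_0\to S\to 0$, giving
\[
\Ext^1_\Ch(E_0,T)\xrightarrow{\iota^*}\Ext^1_\Ch(Z,T)\xrightarrow{\delta}\Ext^2_\Ch(S,T)=0,
\]
where the vanishing uses hereditarity of $\Ch$. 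Thus $[E_1]=\iota^*[F]$ for some $[F]\in\Ext^1_\Ch(E_0,T)$. Put $\Ch_j:=\langle\Ch_i\cup\{F\}\rangle$. Since $\Ch_j$ is closed under subobjects, the pullback $F\times_{E_0}Z$ lies in $\Ch_j$, and by fullness it agrees with $E_1$, so the identity $[E_1]=\iota^*[F]$ already holds in $\Ext^1_{\Ch_j}(Z,T)$. The standard morphism of 2-extensions induced by $F\to E_0\twoheadrightarrow S$ then exhibits $[\xi]=\delta[E_1]=\delta\iota^*[F]=0$ in $\Ext^2_{\Ch_j}(S,T)$.

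The main obstacle is the last step: to ensure that the hereditarity witness in $\Ch$ can be captured by enlarging the subcategory by only finitely many objects. This works precisely because a single application of the connecting homomorphism---requiring just the one auxiliary extension $F$---suffices when $\Ext^2_\Ch=0$; without hereditarity one would potentially need to iterate the construction and the finiteness argument would fail.
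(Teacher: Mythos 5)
Your proof is correct and reaches the same conclusion, but the mechanism for the $\Ext^2$ part differs from the paper's in a way worth noting. The paper reasons directly with Yoneda $2$-extensions: since $[\xi]$ is trivial in $\Ch$, there is a single morphism of $2$-extensions from a sequence $0\to T\to Y_2\to Y_1\to S\to 0$ admitting a retraction $Y_2\to T$ to the given one, and this diagram--consisting of finitely many objects and morphisms--lives in some $\Ch_j$. Your argument instead passes through the long exact sequence for $\Ext_\Ch(-,T)$ applied to $0\to Z\to E_0\to S\to 0$: hereditarity of $\Ch$ gives $\Ext^2_\Ch(S,T)=0$, so $[E_1]$ lifts to some $[F]\in\Ext^1_\Ch(E_0,T)$, and you adjoin this single extension $F$ to build $\Ch_j$. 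What your version buys is that it makes fully explicit which object needs to be adjoined and why the conclusion then holds, namely because the connecting map $\delta$ already kills $\iota^*[F]$ in $\Ch_j$ by exactness of the long exact sequence computed there, combined with the injectivity of $\Ext^1_{\Ch_j}\to\Ext^1_\Ch$ that you established earlier. (Minor quibble: the pullback $F\times_{E_0}Z$ is isomorphic to $E_1$ as an extension, not literally equal; the clean justification that $[E_1]=\iota^*[F]$ in $\Ext^1_{\Ch_j}(Z,T)$ is precisely this injectivity of the transition map, not "fullness" by itself.) The paper's argument requires the reader to recall the Yoneda-theoretic characterization of trivial $2$-extensions via a roof to the split one; yours only uses the long exact sequence and is thus somewhat more self-contained. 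The setup for the directed system, the treatment of $\Ext^1$ via fullness of inclusions of subcategories closed under subquotients, and the surjectivity argument are all standard and match the paper.
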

\begin{proof} 
Recall that for an object $M$ in $\Ch$, we denote by $\langle M\rangle$ the full abelian subcategory of $\Ch$ containing $M$ and closed under subquotients. Its objects are the subquotients of $M^n$ for all $n\geq 1$.
These categories form a set. The system is directed because both
$\langle M\rangle$ and $\langle M'\rangle$ are contained in $\langle M\oplus M'\rangle$. Each $\Ch_i$ has only finitely many simple objects (up to isomorphism) because all objects of $\Ch$ have finite length. All $\Hom$-groups are finite dimensional because this holds in $\Ch$. By Corollary~\ref{cor:is_str_finitary} this makes $\Ch_i$ strongly finitary. By Proposition~\ref{P:Deligne} its $\Ext^1$'s are finite dimensional.

We spell out the details of the proof
for $\Ext^2$. The proof of the statement on $\Ext^1$ is analogous. 
As $\Ch$ is hereditary, its $\Ext^2$s vanish. 
On the other hand, we can compute via the  limit.  Each element of $\varinjlim\Ext^2_{\Ch_i}(S,T)$ is represented by an exact sequence
\[ 0\to T\to X_2\to X_1\to S\to 0\]
in $\Ch_i$ for some $i\in I$.
As $\Ch$ is hereditary, the sequence is trivial in $\Ch$. Hence  there is a commutative diagram of exact sequences in $\Ch$
\[ \xymatrix{
0\ar[r]& T\ar[r] &X_2\ar[r]& X_1\ar[r]& S\ar[r] &0\\
0\ar[r]& T\ar[u]^{=}\ar[r] &Y_2\ar[r]\ar[u]& Y_1\ar[r]\ar[u]& S\ar[u]^{=}\ar[r]& 0
}
\]
and a retraction $Y_2\to T$. All this data lives in $\Ch_j$ for some $j\geq i$.
\end{proof}

\begin{prop}\label{prop:choice}
Let $\Ch$ be as above, $\Bh\subset\Ch$ a strongly finitary full subcategory closed under subquotients. Then $\Bh$ is equivalent to the category of representations of a $k$-algebra $k\cs/\Ih$ where $\cs$ is the species of $\Bh$ with semi-simple quotient $K$ and $\Ih$ a finite dimensional two-sided ideal
generated by $D\Ext^2_{\Bh}(K,K)$.

The ideal $\Ih$ and the inclusion of $D\Ext^2_\Bh(K,K)$ into $\Ih$ can be chosen functorially in $\Bh$. 
\end{prop}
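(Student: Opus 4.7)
The plan is to assemble the first assertion from results already established in the excerpt and then handle the functoriality statement separately. Since $\Bh$ is strongly finitary, Proposition~\ref{P:Deligne} yields an equivalence $\Bh\simeq \modules{A}$ for a finite-dimensional $k$-algebra $A$. Via the Morita reduction of Corollary~\ref{cor:basic}, I pass to the basic algebra $B$ Morita-equivalent to $A$, so $\Bh\simeq\modules{B}$. Proposition~\ref{P:SpeciesWithRel} then presents $B\cong k\cs_B/\Ih$ with $\Ih\subset\rr_{\cs_B}^2$ admissible. The species $\cs_B$ coincides with the intrinsic species $\cs_\Bh$ of Definition~\ref{defn:species_of_c}, since both are constructed from the simples of $\Bh$ and their $\Ext^1$-spaces; thus $\Bh\simeq\modules{k\cs_\Bh/\Ih}$, and $\Ih$ is finite-dimensional because Setup~\ref{setup:artin}(3) forces $\cs_\Bh$ to have no oriented cycles, which together with finite-dimensionality of $\Ext^1$-groups makes $k\cs_\Bh$ finite-dimensional.

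Next I would apply Theorem~\ref{thm:generate_ideal} with $H=k\cs_\Bh$. The case $n=1$ of Theorem~\ref{T:Bongartz} provides a canonical isomorphism of $K$-bimodules
\[
\Ih/(\Ih\,\rad(H)+\rad(H)\,\Ih)\cong D\Ext^2_\Bh(K,K),
\]
and any $K$-bimodule section $s$ of this quotient embeds $D\Ext^2_\Bh(K,K)$ into $\Ih$ with image generating $\Ih$ as a two-sided ideal. This proves the first assertion.

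For the functoriality claim, the essential observation is that the no-oriented-cycles condition, combined with Proposition~\ref{prop:hull_unique}, rigidifies projective covers: each $P(S_i)$ is unique up to \emph{unique} isomorphism. Consequently, the basic algebra $B\cong\End_\Bh(\bigoplus_i P(S_i))\op$ and the presentation map $k\cs_\Bh\twoheadrightarrow B$ are canonically attached to $\Bh$, and so is the ideal $\Ih=\Ih_\Bh$ itself. Given an inclusion $\Bh\hookrightarrow\Bh'$ of strongly finitary full subcategories of $\Ch$ closed under subquotients, Remark~\ref{rem:is_surj} (after restricting to the idempotent indexing simples of $\Bh$) produces a canonical compatible diagram between $k\cs_\Bh,\Ih_\Bh,D\Ext^2_\Bh(K,K)$ and their $\Bh'$-counterparts.

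The main obstacle, and the only genuinely non-automatic step, is a coherent choice of the section $s$ across all $\Bh$ simultaneously, since sections of a short exact sequence of bimodules are not themselves canonical. I would resolve this by a Zorn-style induction on the poset of strongly finitary full subcategories of $\Ch$ closed under subquotients: the extension step relies on the naturality of the bimodule projective resolution in Lemma~\ref{L:ProjR} (whose terms $\ci^l/\ci^{l+1}$ and $\ci^l\rr_H/\ci^{l+1}\rr_H$ depend only on the canonical ideals $\ci$ and $\rr_H$), which shows that the isomorphism from Theorem~\ref{T:Bongartz} is natural in $\Bh$, so a section fixed on $\Bh$ extends to any $\Bh'\supset\Bh$. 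Verifying this naturality and executing the Zorn-type extension is where the substantive work lies.
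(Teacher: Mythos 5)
Your assembly of the existence part is correct and tracks the paper: Proposition~\ref{P:Deligne} $\to$ Morita reduction $\to$ Proposition~\ref{P:SpeciesWithRel} $\to$ Theorem~\ref{thm:generate_ideal}, with finite-dimensionality of $k\cs_\Bh$ coming from the no-oriented-cycles hypothesis. The functoriality half, however, has a genuine gap.

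You assert that once projective covers are rigidified by Proposition~\ref{prop:hull_unique}, ``the presentation map $k\cs_\Bh\twoheadrightarrow B$ \ldots [is] canonically attached to $\Bh$, and so is the ideal $\Ih$ itself,'' leaving only the section $s\colon D\Ext^2_\Bh(K,K)\hookrightarrow\Ih$ as the ``only genuinely non-automatic step.'' This is false. The epimorphism $k\cs_\Bh\to B$ from Proposition~\ref{P:SpeciesWithRel} is built from \emph{two} non-canonical choices: a Wedderburn--Malcev algebra section $K\to B$ (unique only up to conjugation by $1-w$, $w\in\rad B$) and a $K$-$K$-bimodule splitting of $\rr_B\twoheadrightarrow\rr_B/\rr_B^2$. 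The ideal $\Ih=\ker(k\cs_\Bh\to B)$ therefore depends on both and is \emph{not} intrinsic to $\Bh$. Consequently your appeal to ``naturality of Theorem~\ref{T:Bongartz}'' is circular at this point: that naturality statement is relative to a fixed presentation $H=k\cs_\Bh\to B$, which is precisely the thing that has not yet been made compatible across inclusions $\Bh\subset\Bh'$. Remark~\ref{rem:is_surj} only gives you a canonical surjection $k\cs_{\Bh'}\to k\cs_\Bh$ on path algebras; it says nothing about the ideals.

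The paper's proof (in the appendix) handles this by propagating all three non-canonical choices---the Wedderburn--Malcev section, the bimodule splitting of $\rr_B/\rr_B^2$, and the section of $\Ih\to\Ih/(\Ih\rr+\rr\Ih)$---through the Zorn process, in the order of the four ``steps'' of Berg's construction, using semisimplicity of $K\otimes_k K^{op}$ to lift splittings along $B'\twoheadrightarrow B$. Your Zorn sketch is also incomplete in a second respect: you only invoke the extension step along an inclusion $\Bh\subset\Bh'$ (the content of Lemma~\ref{lem:choice1}). To push past a maximal element one must also reconcile data on two subcategories $\Bh_1,\Bh_2$ that agree on the intersection $\Bh_0=\Bh_1\cap\Bh_2$ and amalgamate them on the span $\tilde\Bh$; this is the content of Lemma~\ref{lem:choice2}, which uses the pullback description $k\cs_{\tilde\Bh}\cong k\cs_{\Bh_1}\times_{k\cs_{\Bh_0}}k\cs_{\Bh_2}$ and $\tilde\Ih = J_1\cap J_2$. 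Without that gluing step the maximality argument in Zorn's lemma does not close.
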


\begin{proof} See Appendix~\ref{app}.
\end{proof}

\begin{rem}There is an overlap with the results of Iusenko--MacQuarrie in \cite{iusenko-macquarrie}. They consider certain algebras $A$ over perfect fields $k$ (in particular finite dimensional ones) and construct the surjection
$k\cs\to A$ functorially up to some equivalence. This is not enough for our purposes. We thank Julian Külshammer for making us aware of the reference.
\end{rem}

From now on we fix functorial choices as in the proposition.
In order to prove the theorem, we need to show that the hereditary category 
$\modules{k\cs_\Bh}$ 
can be realised as a subcategory of $\Ch$.

\begin{lemma}\label{lem:last}
For each strongly finitary full subcategory $\Bh\subset\Ch$ closed under subquotients there exists
a strongly finitary $\Bh\subset\Bh'\subset\Ch$ closed under subquotients such that the map
\[ k\cs_{\Bh'}/\Ih'\to k\cs_{\Bh}/\Ih\]
factors via $k\cs_\Bh$. 

The category $\modules{k\cs_\Bh}$ is isomorphic to a full subcategory of $\Bh'$.
\end{lemma}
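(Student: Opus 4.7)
The plan is to enlarge $\Bh$ inside $\Ch$ just enough to kill every $\Ext^2$-class coming from $\Bh$. Since $\Bh$ is strongly finitary, it has only finitely many isomorphism classes of simple objects, and each $\Ext^2_\Bh(S,T)$ is finite dimensional (Proposition~\ref{P:Deligne}); in particular $\bigoplus_{S,T}\Ext^2_\Bh(S,T)$ is finite dimensional. Choose a projective generator $M$ of $\Bh$, so that $\Bh=\langle M\rangle$ sits in the directed system of Lemma~\ref{lem:limits}. Since $\Ch$ is hereditary and $\varinjlim_i\Ext^2_{\Ch_i}(S,T)=0$, each of the finitely many generators of $\Ext^2_\Bh(S,T)$ becomes trivial in some $\Ch_i\supset\Bh$; by directedness of the system, a single strongly finitary $\Bh'=\Ch_{i_1}\supset\Bh$ (closed under subquotients in $\Ch$) can be chosen so that the natural map $\Ext^2_\Bh(S,T)\to\Ext^2_{\Bh'}(S,T)$ is zero for all simples $S,T$ of $\Bh$.

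Next I would compare the presentations of $\Bh$ and $\Bh'$. By Remark~\ref{rem:is_surj} the inclusion $\Bh\subset\Bh'$ induces a surjection of path algebras $\pi\colon k\cs_{\Bh'}\twoheadrightarrow k\cs_\Bh$ that kills the idempotents and arrows coming from the ``new'' simples and new $\Ext^1$-contributions of $\Bh'$. By the functoriality part of Proposition~\ref{prop:choice}, the embeddings $D\Ext^2(K,K)\hookrightarrow\Ih\subset k\cs$ are natural in the subcategory, so there is a commutative diagram whose left vertical arrow is the natural map $D\Ext^2_{\Bh'}(K',K')\to D\Ext^2_\Bh(K,K)$. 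On the bimodule pieces involving only simples of $\Bh$ this arrow is dual to $\Ext^2_\Bh(S,T)\to\Ext^2_{\Bh'}(S,T)$, which is zero by the choice of $\Bh'$; on pieces involving a new simple of $\Bh'$ the target is trivial since $K$ does not contain that simple, and in addition the corresponding idempotents in $k\cs_{\Bh'}$ are already sent to $0$ by $\pi$. Hence $\pi$ sends the generating set of $\Ih'$ to $0$, so $\pi(\Ih')=0$ and $\pi$ factors through a surjection $\bar\pi\colon k\cs_{\Bh'}/\Ih'\twoheadrightarrow k\cs_\Bh$; post-composing with $k\cs_\Bh\twoheadrightarrow k\cs_\Bh/\Ih$ yields the desired factorization of the canonical map.

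For the final assertion, write $B'=k\cs_{\Bh'}/\Ih'$. By Proposition~\ref{prop:choice} we have $\Bh'\cong\modules{B'}$, and $k\cs_\Bh$ is finite dimensional by the cycle-free hypothesis in Set-Up~\ref{setup:artin}. The surjection $\bar\pi\colon B'\twoheadrightarrow k\cs_\Bh$ makes every $k\cs_\Bh$-module into a $B'$-module by restriction of scalars; the resulting restriction functor is exact and fully faithful, identifying $\modules{k\cs_\Bh}$ with the full subcategory of $\Bh'$ consisting of modules on which $\ker\bar\pi$ acts trivially.

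The main obstacle I expect is the bookkeeping in the second paragraph: verifying that the functorial embedding of Proposition~\ref{prop:choice} really is compatible with $\pi$ in the bimodule-graded sense, so that vanishing of the ``old'' contributions to $\Ext^2$ together with the automatic vanishing of the ``new'' contributions under $\pi$ indeed gives $\pi(\Ih')=0$. Everything else is a direct application of Lemma~\ref{lem:limits} and standard restriction-of-scalars.
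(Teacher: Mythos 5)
Your proposal is correct and takes essentially the same route as the paper. Both arguments rest on the same two pillars: (i) use Lemma~\ref{lem:limits} and the finiteness of the simple objects and $\Ext^2$-spaces of $\Bh$ to find a single strongly finitary $\Bh'$ in which every class in $\Ext^2_\Bh(S,T)$ dies; and (ii) invoke the functorial choice of the embedding $D\Ext^2(K,K)\hookrightarrow\Ih\subset k\cs$ from Proposition~\ref{prop:choice} to deduce that the surjection $\pi\colon k\cs_{\Bh'}\twoheadrightarrow k\cs_\Bh$ annihilates the generators of $\Ih'$ and hence all of $\Ih'$. The paper packages step (i) into the assertion that the direct system $\Ext^2_{\Bh_i}(K_i,K_i)$ is ML-zero (equivalently, the dual projective system is AR-zero) and that this forces $(\Ih_i)$ to be AR-zero, whereas you spell out the same vanishing directly for the specific pair $\Bh\subset\Bh'$; this is a presentational difference only. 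Your concluding observation — that the factorization makes $k\cs_\Bh$ into a $B'$-module and that restriction of scalars along $\bar\pi$ is fully faithful and exact, so $\modules{k\cs_\Bh}$ sits inside $\Bh'\cong\modules{B'}$ as the full subcategory on which $\ker\bar\pi$ acts trivially — matches the paper's identification of this subcategory as $\langle X\rangle$ for $X = k\cs_\Bh$.

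The "main obstacle" you flag at the end — whether the functorial choices of Proposition~\ref{prop:choice} are genuinely compatible with $\pi$ on the bimodule-graded pieces — is indeed the crux, but it is exactly what Lemma~\ref{lem:choice1} in the appendix establishes (the "two obvious diagrams commute"), so the gap you worried about is already filled by the cited result.
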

\begin{proof} Let $I$ be the direct system of full strongly finitary subcategories
$\Bh_i\subset\Ch$ closed under subquotients exhausting $\Ch$.
	We write $H_i=k\cs_i$ for the path algebra of the species for $\Bh_i$. It is finite dimensional because $\cs_i$ does not contain any oriented cycles. Let $K_i=H_i/\rad(H_i)$.
We consider
\begin{equation}\label{eq:lim} 0\to \Ih_i\to H_i\to H_i/\Ih_i\to 0\end{equation}
such that $\Bh_i\isom\modules{H_i/\Ih_i}$.
	Moreover, the ideal $\Ih_i$ is generated by the image of $D\Ext^2_{\Bh_i}(K_i,K_i)$. Note that
the transition map $H_j\to H_i$ is an epimorphism whenever $j\geq i$ by Remark~\ref{rem:is_surj}. 

Recall that $\Ext^2_{\Bh_i}$ is finite-dimensional and $\varinjlim\Ext^2_{\Bh_i}$ vanishes by Lemma~\ref{lem:limits}. Hence for all $i$ and simple objects $S,T$ of $\Bh_i$, there is $j\geq i$ such that
	the image of $\Ext^2_{\Bh_i}(S,T)$ in $\Ext^2_{\Bh_j}(S,T)$ vanishes. As there are only finite many simple objects in $\Bh_i$, the index $j$ can be chosen for all $S,T$ in $\Bh_i$ simultanuously. The direct system $\Ext^2_{\Bh_i}(K_i,K_i)$
is ML-zero. This makes the dual projective system $D\Ext^2_{\Bh_i}(K_i,K_i)$ AR-zero. 
	In turn this implies that the system $(\Ih_i)_{i\in I}$ is AR-zero. 
From this we get for each $i$ an index $j\geq i$ such that 
\[ H_j\twoheadrightarrow H_i\]
factors via $H_j/\Ih_j$ and
\[ H_{j}/\Ih_{j}\to H_i/\Ih_i\]
factors via $H_i$. For $\Bh=\Bh_i$ and setting $\Bh'=\Bh_j$ this was the claim. 

The factorisation $k\cs_{\Bh'}/\Ih'\twoheadrightarrow k\cs_\Bh\twoheadrightarrow k\cs_\Bh/\Ih$ makes $k\cs_\Bh$ an object of $\modules{k\cs_{\Bh'}/\Ih'}$, and hence an object $X$ of $\Bh'$. The category
$\langle X\rangle$ is equivalent to $\modules{k\cs_\Bh}$. 
\end{proof}

\begin{rem} All terms of the short exact sequence \eqref{eq:lim} are finite dimensional. This make $\varprojlim$ exact. The argument in the proof shows that
\[ H:=\varprojlim_i H_i/\Ih_i\isom\varprojlim_i H_i.\]
The algebra $H$ is pseudo-compact in the sense of \cite{gabriel}. We find $\Ch$ as the category of its discrete representations.
\end{rem}

\begin{proof}[Proof of Theorem~\ref{thm:sat_exists}.]
By Lemma~\ref{lem:limits}, the subcategory $\Bh$ is a limit of full strongly finitary subcategories $\Bh_i$ closed under subquotients. If the theorem holds for
	all $\Bh_i$, then we may put $\Bh^\sat=\varinjlim_i\Bh_i^\sat$. It has the required properties because $\Ext^n$ commutes with the direct limit. 

Hence it suffices to consider the case $\Bh=\langle M\rangle$ for an object $M\in \Ch$. It is the category of $H/\Ih$-modules with $H=k\cs$ hereditary and $\Ih$ admissible. We define  $\Bh^\sat$
	as the category of f.g. $H$-modules. By Lemma~\ref{lem:last} it is equivalent to a full subcategory of $\Ch$. By construction it satisfies \eqref{it:1thm} (same simple objects) and \eqref{it:2thm} (same $\Ext^1$) of the Theorem. 

Let $\Bh^\sat\subset\Bh'\subset \Ch$ be a full abelian subcategory satisfying
 properties \eqref{it:1thm} and \eqref{it:2thm} of Theorem~\ref{thm:sat_exists}. 
The category $\Bh'$ has the same species as $\Bh$, hence $\Bh^\sat\subset\Bh'^\sat$ is an inclusion of hereditary categories with the same species. This implies
that they are equal, and hence also $\Bh^\sat=\Bh'$.
\end{proof}

The above proof gives the following explicit description of the saturation:
\begin{cor}\label{cor:Msat}
 Let $\Ch$ be as above. If $\Bh=\langle M\rangle\subset\Ch$ is generated by a single object, then so is $\Bh^\sat$. The saturation is equivalent to the category
$\modules{k\cs}$ where $\cs$ is the species of $\Bh$.
\end{cor}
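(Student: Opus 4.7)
The plan is to read off both claims directly from the proof of Theorem~\ref{thm:sat_exists} specialised to $\Bh=\langle M\rangle$, checking that the generator one produces lives inside $\Ch$ via the embedding of Lemma~\ref{lem:last}.

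First I would verify that $\Bh=\langle M\rangle$ is strongly finitary. Since $\Ch$ is artinian and noetherian with finite-dimensional Hom-spaces (Set-Up~\ref{setup:artin}), the object $M$ has finite length, so Corollary~\ref{cor:is_str_finitary} applies and gives a finite dimensional $k$-algebra with $\Bh\cong\modules{A}$. By Proposition~\ref{prop:choice}, choosing the functorial presentation, we may write $\Bh\cong\modules{k\cs/\Ih}$, where $\cs=\cs_\Bh$ is the species of $\Bh$ and $\Ih$ is finite-dimensional, generated by (the image of) $D\Ext^2_\Bh(K,K)$. In particular $\cs$ has finitely many vertices (the simple objects of $\Bh$), each bimodule ${}_jE_i=D\Ext^1_\Bh(S_j,S_i)$ is finite-dimensional, and the partial order on simples required by Set-Up~\ref{setup:artin}(3) restricts to $\Bh$, so $\cs$ has no oriented cycles. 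Consequently $H:=k\cs$ is a finite-dimensional hereditary $k$-algebra.

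Next I would invoke the construction in the proof of Theorem~\ref{thm:sat_exists}: specialised to the case $\Bh=\langle M\rangle$, it sets $\Bh^\sat$ to be the category of finitely generated $H$-modules, and Lemma~\ref{lem:last} realises this as a full subcategory of $\Ch$. This gives the equivalence $\Bh^\sat\cong\modules{k\cs}$ claimed in the corollary.

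It remains to exhibit a single generator. Because $H=k\cs$ is finite-dimensional, the regular left module ${}_HH$ is an object of $\modules{H}$; it is a projective generator, so every finitely generated $H$-module is a subquotient of $H^{\oplus n}$ for some $n$. Transporting through the equivalence $\modules{k\cs}\simeq\Bh^\sat\hookrightarrow\Ch$ of Lemma~\ref{lem:last}, let $N\in\Ch$ be the object corresponding to ${}_HH$. Then $\Bh^\sat=\langle N\rangle$, which proves that $\Bh^\sat$ is generated by a single object. The only potential subtlety is ensuring that the embedding of $\modules{k\cs}$ into $\Ch$ produced in Lemma~\ref{lem:last} is closed under subquotients (so that $\langle N\rangle$ computed inside $\Ch$ agrees with the one computed inside $\modules{k\cs}$); this is automatic from that lemma's construction via the full subcategory $\Bh'$, which is itself closed under subquotients in $\Ch$.
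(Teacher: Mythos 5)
Your proposal is correct and matches the paper's (implicit) argument: specialising the proof of Theorem~\ref{thm:sat_exists} to $\Bh=\langle M\rangle$, one sets $\Bh^\sat = \modules{k\cs}$ and observes that the object $X$ produced in Lemma~\ref{lem:last} --- precisely your $N$, the image of the regular module ${}_HH$ under the embedding $\modules{k\cs}\cong\langle X\rangle\subset\Ch$ --- is a single generator. Your final remark is also on target, though the cleanest way to phrase it is that $\langle X\rangle$, computed in $\Ch$, is automatically closed under subquotients and coincides with $\langle X\rangle$ computed in $\Bh'$ since $\Bh'$ is closed under subquotients in $\Ch$.
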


\begin{cor}\label{cor:finite_ext}
Let $S_1,\dots,S_n$ be simple objects of $\Ch$ and for each $i\neq j$ fix a finite dimensional subspace
	$V_{ij}\subset\Ext^1_{\Ch}(S_i,S_j)$ which is an $\End(S_i)-\End(S_j)$-sub-bimodule.
Then there is a hereditary category $\Bh\subset\Ch$ with these simple objects and $V_{ij}=\Ext^1_{\Bh}(S_i,S_j)$. 

If, moreover,  the partial order on the set of simple objects of $\Ch$ 
of  Set-Up~\ref{setup:artin} can be refined to a total order such that
$V_{ij}=\Ext^1_\Ch(S_i,S_j)$ unless $i$ and $j$ are neighbours in the total order, then $\Bh$
 is given by the full subcategory of objects $X$ such that all subquotients of $X$ of length $2$  which are extensions of the simple objects $S_i$ and $S_j$ are in $V_{ij}$.
\end{cor}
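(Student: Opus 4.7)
The plan is to apply Theorem~\ref{thm:sat_exists} to an explicit strongly finitary subcategory whose $\Ext^1$'s realize precisely the prescribed $V_{ij}$. Set $D_l := \End_\Ch(S_l)$. For each pair $i\neq j$, since $V_{ij}\subseteq\Ext^1_\Ch(S_i, S_j)$ is finite dimensional over $k$, choose a finite set $\{v_{ij,\alpha}\}_\alpha$ of $D_i$-$D_j$-bimodule generators; for each generator pick a representative extension $\xi_{ij,\alpha}\colon 0\to S_j\to E_{ij,\alpha}\to S_i\to 0$ in $\Ch$. Define
\[
M := \bigoplus_{i=1}^n S_i \;\oplus\; \bigoplus_{i\neq j,\,\alpha} E_{ij,\alpha}\in\Ch,\qquad \Bh_0 := \langle M\rangle\subseteq \Ch.
\]
Since $\Ch$ has finite-dimensional Hom-spaces and $M$ has finite length, Corollary~\ref{cor:is_str_finitary} implies $\Bh_0$ is strongly finitary with simple objects exactly $S_1,\dots,S_n$.

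The key step is to verify $\Ext^1_{\Bh_0}(S_i, S_j) = V_{ij}$, viewed as sub-bimodules of $\Ext^1_\Ch(S_i, S_j)$ via the injection of Remark~\ref{rem:is_surj}. The inclusion $\supseteq$ is immediate from the construction and closure of $\Bh_0$ under pullback along $D_i$ and pushout along $D_j$. For the reverse, any extension $\xi\colon 0\to S_j\to E\to S_i\to 0$ with $E\in\langle M\rangle$ is realized as a length-$2$ subquotient $E = U/V$ of some $M^N$; decomposing $M^N$ as a direct sum of copies of the $S_l$ and $E_{ij,\alpha}$, analysing $U$ via the filtration $U\cap X_\lambda\subseteq U$ along each summand $X_\lambda$, and applying Yoneda pullback--pushout, one expresses $[\xi]$ as a $D_j$-$D_i$-bilinear combination of the $v_{ij,\alpha}$, so $[\xi]\in V_{ij}$. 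Set-Up~\ref{setup:artin}(3) is essential here: the partial order on the simples constrains the possible ``mixing'' of composition factors from distinct direct summands, forcing the resulting classes to be traceable to the prescribed generators.

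Given the claim, set $\Bh := \Bh_0^\sat$ via Theorem~\ref{thm:sat_exists}. Parts~(1) and (2) of that theorem say that $\Bh$ is a hereditary full subcategory of $\Ch$ closed under subquotients, with simples $S_1,\dots,S_n$ and $\Ext^1_\Bh(S_i, S_j) = V_{ij}$. Part~(3) describes $\Bh$ as the full subcategory of $X\in\Ch$ whose length-$\leq 2$ subquotients lie in $\Bh_0$; unpacking, the simples must be among the $S_i$ and the length-$2$ subquotients of type $(S_i, S_j)$ must have extension class in $V_{ij}$, matching the description in the statement.

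The main obstacle is the reverse inclusion in the $\Ext^1$-computation: controlling how length-$2$ subquotients of a direct sum can mix simple composition factors from distinct summands requires a careful Yoneda analysis, and it is precisely here that the partial order from Set-Up~\ref{setup:artin}(3) is used in an essential way.
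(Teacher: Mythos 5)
Your construction is the same as the paper's: pick representatives $E_{ij,\alpha}$ of a generating set of each $V_{ij}$, form $M$, and set $\Bh=\langle M\rangle^{\sat}$ via Theorem~\ref{thm:sat_exists} (the paper does not explicitly add the simples $S_i$ to the direct sum, which you are right to do). You also correctly flag the one step that the paper treats as obvious, namely that $\Ext^1_{\langle M\rangle}(S_i,S_j)=V_{ij}$ (the paper just writes ``has the right $\Ext^1$''), and you correctly observe that once this equality holds, parts (1)--(3) of Theorem~\ref{thm:sat_exists} give everything claimed.

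However, your proposed argument for the reverse inclusion $\Ext^1_{\langle M\rangle}(S_i,S_j)\subseteq V_{ij}$ does not go through as stated, and the claimed role of Set-Up~\ref{setup:artin}(3) there is misplaced. If $E=U/W$ with $W\subset U\subset M^N=\bigoplus_\lambda X_\lambda$, the subobjects $U\cap X_\lambda$ do not control $U$: already for $M=E_{v_1}\oplus E_{v_2}$ (two basis extensions of $S_i$ by $S_j$), the diagonal-type submodule generated by the pair of tops has each $U\cap E_{v_\beta}$ equal to the socle of that summand, yet the length-$2$ quotients of $U$ realize the whole $k$-span of $v_1,v_2$, not just the two classes individually. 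So ``mixing'' across summands is not an obstruction that the partial order rules out; it is exactly how arbitrary $k$-linear (and bimodule) combinations of the $v_{ij,\alpha}$ are produced, and a proof along these lines would have to track it carefully rather than bound it. The clean route, implicit in the paper's setup, is via Lemma~\ref{L:DimFromGen} and Remark~\ref{R:relations}: writing $\langle M\rangle\cong\modules{A/\Ih}$ with $\Ih=\mathrm{Ann}_A(M)\subseteq\rad A$, one has a canonical identification of $K$-$K$-bimodules $D\Ext^1_{A/\Ih}(S_i,S_j)\cong e_j\,\rad(A)/(\rad^2(A)+\Ih)\,e_i$, so that $\Ext^1_{A/\Ih}(S_i,S_j)$ is the annihilator, under the pairing with $e_j\rad/\rad^2 e_i$, of the image $\overline{\Ih}_{ij}$ of $e_j\Ih e_i$. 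Since an element $r\in e_j\rad(A)e_i$ annihilates $M$ precisely when it pairs trivially with every chosen class $v_{ij,\alpha}$ (all other summands of $M$ are killed for weight/idempotent reasons), $\overline{\Ih}_{ij}=\mathrm{Ann}(V_{ij})$, and the double-annihilator gives $\Ext^1_{A/\Ih}(S_i,S_j)=V_{ij}$. This computation does not use the partial order; Set-Up~\ref{setup:artin}(3) is what makes the species cycle-free so that the hereditary closure in Theorem~\ref{thm:sat_exists} exists at all (and ensures $\Ext^1(S_i,S_i)=0$, so that only $i\neq j$ is relevant).
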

\begin{proof} We pick $k$-bases of the $V_{ij}$ and for each basis element an
object of $\Ch$ representing the extension. Let $M$ be the direct sum of all these objects and $\Bh$ the saturation of $\langle M\rangle$ in $\Ch$. It is hereditary and has the right $\Ext^1$.

Now assume that we have a total order $S_1<S_2<\dots <S_n$ and $V_{ij}$ as in the statement. 
Without loss of generality, these are the only simple objects of $\Ch$.
For every object $X$ of $\Ch$ and $a\in \{1,\dots,n \}$ there is a canonical short exact sequence
\[ 0\to X_{[a,n]}\to X\to X_{[1,a-1]}\to 0\]
such that $X_{[a,n]}$ has simple subquotients in $\{S_a,\dots,S_n\}$ and
$X_{[1,a-1]}$ has simple subquotients in $\{S_1,\dots,S_{a-1}\}$. For $1\leq a\leq b\leq n$ we define
$X_{[a,b]}$ as the kernel of $X_{[a,n]}\to X_{[1,b-1]}$. It captures the simple subquotients in $\{S_a,\dots,\dots,S_b\}$. 

Note that $X_{[a-1,a]}$
is a sum of elements of $\Ext^1_{\Ch}(S_{a-1},S_a)$. Assume that is
is in $V_{a-1a}$ for all $a$ and hence $X_{[a-1,a]}\in \Bh$. We claim
that $X_{[a,b]}$ is in $\Bh$ for all $a\leq b$ and argue by induction on
$b-a\geq 0$. The cases $b-a=0,1$ hold by assumption.

Suppose that $X_{[a,b]}\in\Bh$ for $b-a\leq m$. Let $i<a$. We have a commutative diagram of exact sequences
\[\begin{xy}\xymatrix{
0\ar[r]&\Ext^1_\Bh(S_i,X_{[a+1,b]})\ar[d]\ar[r]&\Ext^1_\Bh(S_i,X_{[a,b]})\ar[r]\ar[d]& \Ext^1_\Bh(S_i,X_{[a,a]})\ar[d]\ar[r]&0\\
0\ar[r]&\Ext^1_\Ch(S_i,X_{[a+1,b]})\ar[r]&\Ext^1_\Ch(S_i,X_{[a,b]})\ar[r]&\Ext^1_\Ch(S_i,X_{[a,a]})\ar[r]&0
}\end{xy}
\]
If $i<a-1$, then the horizontal map on the right is an isomorphism by our assumption on $V_{i,a}$. By a second induction on $b-a$, this implies that all horizontal maps are isomorphisms in this case. In particular
\[ \Ext^1_{\Bh}(X_{[a-1,a-1]},X_{[a+1,b]})\isom \Ext^1_{\Ch}(X_{[a-1,a-1]},X_{[a+1,b]}).\]
We now consider $X_{[a-1,b]}$. It defines
an element of $\Ext^1_\Ch(X_{[a-1,a-1]},X_{[a,b]})$ whose image in 
$\Ext^1_\Ch(X_{[a-1,a-1]},X_{[a,a]})$ is in $\Ext^1_\Bh(X_{[a-1,a-1]},X_{[a,a]})$. By a little diagram chase, this information implies that $X_{[a-1,b]}$ is 
in $\Bh$.
\end{proof}
\begin{rem}A similar process is applied by Deligne and Goncharov \cite{deligne-goncharov} in order to define mixed Tate motives over the rings of integers as a full 
subcategory of all mixed Tate motives over a number field. They use Tannakian methods instead. We will come back to this point in Chapter~\ref{sec:MTM}.
\end{rem}


\begin{ex}\label{ex:not_can}
As promised, we show that the hereditary closure is not canonical as a subcategory. Let $k$ be a perfect field, $Q$ the quiver with vertices $x,y,z$ and
edges $\alpha:x\to z$, $\beta:x\to y$, $\gamma:y\to z$. Let $\Ch$ be the
category of finite dimensional representations of $Q$. 

We denote
by $M_\alpha$ the representation sending $x,z$ to $k$, $y$ to $0$ and $\alpha$ to $1$. It represents the generator of $\Ext^1_\Ch(S(z),S(x))$ where
$S(x), S(z)$ are the simple objects corresponding to $x$ and $z$. We define
 $M_\beta, M_\gamma$ analogously. Finally, for $r\in k$, denote by $M_r$
the representation mapping $x,y,z$ to $k$, $\beta,\gamma$ to $1$ and $\alpha$ to $r$.

Let $\Bh=\langle M_\beta\oplus M_\gamma\rangle$. 
We have
\[ \Bh\subset \langle M_r\rangle.\]
We claim that $\langle M_r\rangle$ is a hereditary closure. By Lemma~\ref{L:DimFromGen}, the category is equivalent to the category of representations of
\[ kQ/\langle r\gamma\beta-\alpha\rangle \isom kQ'\]
where $Q'\subset Q$ is the subquiver omitting the edge $\alpha$. The algebra is hereditary and has the required simple objects and Ext-groups. Note that this is
true for every $r$. These categories are distinct because
\[ \langle r\gamma\beta-\alpha\rangle \cap \langle r'\gamma\beta-\alpha\rangle=0\]
for all $r\neq r'$.

Note that $\Bh$ does not satisfy the conditions on Ext-groups in Corollary~\ref{cor:finite_ext}. The total order is $x>y>z$ but  
\[ 0=V_{xz}\neq \Ext^1_\Ch(S(x),S(z))\isom k  \ .\] 
\end{ex}


\section{Periods of motives}\label{sec:periods}
We introduce the period space of a motive and formulate dimension formulas for them. They are sharp in the cases where the period conjecture is known and upper bounds in general. For a more complete introduction, we refer to the survey article \cite{huber_galois}.

Let $F\subset\Qbar\subset\C$ be a field of algebraic numbers. 
Let $\MM$ be a $\Q$-linear category of mixed motives over $F$. To fix
ideas we can choose it as a (not necessarily full) subcategory of Nori's
category of mixed motives, see \cite{period-buch}.

Motives are supposed to capture homological properties of the category of
algebraic varieties over $F$ (in our case). We are interested in two such theories.

Let 
\begin{align*}
\hsing:\MM&\to\modules{\Q},\\
\hdR:\MM&\to\modules{F}
\end{align*}
be the singular and de Rham realisation, respectively. The functors are $\Q$-linear, faithful and exact. Moreover, there is a functorial isomorphism
\begin{equation}\label{per} \pi:\hsing\tensor_\Q\C\isom\hdR\tensor_F\C.\end{equation}
For the constructions, we refer to \cite{period-buch}. For our purposes, it suffices to use them as a black box. 
\begin{lemma}\label{lem:is_ok}
Let $\MM$ be a $\Q$-linear category as above. Then $\MM$ satisfies the assumptions of Set-Up~\ref{setup:artin} (1)--(3).
\end{lemma}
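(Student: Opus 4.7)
The plan is to check each of the four conditions in Set-Up~\ref{setup:artin} in turn. Condition (4) (hereditariness) is exactly the standing hypothesis on $\MM$ imposed just before the lemma, so nothing new is needed there.

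For conditions (1) and (2) the singular realisation does essentially all the work. Since $\hsing\colon\MM\to\modules{\Q}$ is faithful, exact and lands in finite-dimensional $\Q$-vector spaces, I would first note the injection
\[ \Hom_{\MM}(M,N)\hookrightarrow \Hom_\Q(\hsing M,\hsing N), \]
which gives (2) immediately. For (1), any strictly monotone chain of subobjects of $M\in\MM$ is carried by $\hsing$ to a strictly monotone chain of $\Q$-subspaces of the finite-dimensional space $\hsing M$, and such a chain must stabilise in both directions; so every object of $\MM$ has finite length, and in particular $\MM$ is both artinian and noetherian.

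The interesting step is (3). Here I would invoke the weight filtration $W_\bullet$ available on Nori motives (and hence on its full subcategory $\MM$). Any simple object $S$ of $\MM$ must be pure of some weight $w(S)\in\Z$, since the weight filtration is by subobjects of $S$ and simplicity forces all but one of the $\gr^W_k S$ to vanish. I would then define the partial order on isomorphism classes of simples by declaring $S<S'$ iff $w(S)>w(S')$. Given a non-split extension of simples
\[ 0\to S'\to E\to S\to 0, \]
compatibility of $W_\bullet$ with the sequence forces $w(S')\le w(S)$; if equality held, then $E$ would be pure of weight $w(S)$, hence semi-simple, contradicting non-splitness. Therefore $\Ext^1_{\MM}(S,S')\neq 0$ forces $w(S')<w(S)$, i.e.\ $S<S'$, as required.

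The only non-formal ingredient is the existence and functoriality of the weight filtration on $\MM$, which has to be imported from the theory of mixed motives; in the two applications treated in Chapters~\ref{sec:1-mot} and \ref{sec:MTM} this is available. Once that input is accepted, the main obstacle is mild and the remaining verifications are direct comparisons with the singular realisation and the pure graded pieces.
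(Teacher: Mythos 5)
Your proof is correct and takes essentially the same approach as the paper: conditions (1) and (2) via faithfulness and exactness of $\hsing$, condition (3) via the weight filtration with the order given by (reversed) weight, and condition (4) as the standing hypothesis. You spell out a bit more detail than the paper, in particular the case $w(S)=w(S')$, where you rely on semi-simplicity of pure motives — a fact the paper absorbs into its citation of \cite[Chapter~10]{period-buch} when it asserts that the weight filtration kills $\Ext^1_{\MM}(S,S')$ unless $w>w'$.
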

\begin{proof}Conditions (1) and (2) hold because the category $\MM$ has an exact faithful functor $H_\sing$ to the category of finite dimensional vector spaces $\modules{\Q}$, which satisfies the properties (1) and (2). Every motive carries a canonical weight filtration, see \cite[Chapter~10]{period-buch}. 
If $S,S'$ are simple of weights $w,w'\in\Z$, respectively, then the weight filtration splits every element of $\Ext^1_{\MM}(S,S')=0$ unless $w>w'$. This is the partial order required in (3).
\end{proof}

The special choices of categories $\MM$ (of $1$-motives and of Mixed Tate Motives, respectively) that we treat in the next two chapters are known to be hereditary. Moreover, the category of all mixed motives over fields contained in $\Qbar$ is conjectured to be hereditary. 
Note, however, that full abelian subcategories of hereditary categories need not be hereditary in general. A $2$-extension in the subcategory can be split by an object of the  bigger category. So the categories $\MM$ are not necessearily hereditary in general.

\begin{defn}\label{defn:period-space}
Let $M\in\MM$. The \emph{period space of $M$} is the $F$-vector space $\Per\langle M\rangle$ generated
by the entries of a matrix for the isomorphism $\pi$ in Equation (\ref{per}) in a $\Q$- and $F$-basis, respectively. Its elements are called \emph{periods of $M$}.
\end{defn}

The period matrix itself depends on the choice of bases. However, passing to different bases changes the period matrix only by multiplication by an invertible matrix with $\Q$- or $F$-coefficients, respectively. Thus  the vector space $\Per\langle M\rangle$ is independent of the choice of bases. Transcendence theory asks about linear or algebraic relations between elements of period spaces. The \emph{period conjecture} says qualitatively that all such relations are induced from a small set of obvious relations, indeed from functoriality of motives. We have the language to make this more precise.

\begin{defn}\label{defn:Nori-alg}
Let $M\in\MM$. The \emph{Nori algebra} $A(M)$ is defined
as the ring of endomorphisms of the functor
\[ \hsing:\langle M\rangle\to \modules{\Q}.\]
\end{defn}
The algebra $A(M)$ is finite dimensional over $\Q$. By construction 
$\hsing$ can be understood as a functor to $\modules{A(M)}$.
Nori proved, see also \cite[Section~7.3]{period-buch} that $\langle M\rangle$ is 
equivalent to the category of $A(M)$-modules via this functor.

\begin{defnthm}[{\cite{huber_galois}}]The \emph{period conjecture holds for $M$} if and only if
\[ \dim_\Q A(M)=\dim_F \Per\langle M\rangle.\]
The estimate $\geq$ holds unconditionally.
\end{defnthm}
In our context, we use this as a definition, but it is actually a theorem that the statement is equivalent to the Nori-Kontsevich formulation of the period conjecture.

We now turn to a quantitative version of the conjecture where we give a formula for $\dim_F\Per\langle M\rangle$.

\begin{notation}\label{not:mult}
Fix $M\in\MM$. Let $M_1,\dots,M_n$ be the simple objects of $\langle M\rangle$. We denote
\begin{itemize}
\item $D_i=\End_\MM(M_i)$ a skew field of $\Q$-dimension $d_i$;
\item $m_i=\dim_{D_i}\hsing(M_i)$;
\item $\cs=((D_i)_i, ({}_iE_{j})_{ij} ) $ the species of $\langle M\rangle$ with vertices $1,\dots,n$, see Definition~\ref{defn:species_of_c};
\item for a path $\gamma:s\pfad s'$ in $\cs$ let $E(\gamma)$ be the tensor product of the $D\Ext^1_{\langle M\rangle}(M_{\gamma(t)},M_{\gamma(t+1)})$ along the path, see Definition~\ref{defn:q_from_cs}.
\end{itemize} 
\end{notation}

\begin{lemma}\label{lem:mult}
The number $m_i$ agrees with the multiplicity of the projective cover $P_i$ of the  simple object $M_i$ in $A(M)$ in the sense of Section~\ref{ssec:not_formulas}.
\end{lemma}
\begin{proof}As mentioned above, $A(M)$ is defined as the endomorphisms of the functor
\[ H_\sing:\langle M\rangle\to \modules{\Q}.\]
The functor tautologically takes values in the category $\modules{A(M)}$. It is an equivalence of categories.  This implies
\[ H_\sing(X)=\Hom_{A(M)}(A(M),\hsing(X))\]
for all $X\in\langle M\rangle$.
By Equation \eqref{E:decomp} of Section~\ref{ssec:morita} we decompose
\[ A(M)=\bigoplus P_j^{m_j}\]
 where $P_j$ is the projective cover of the simple object $\hsing(M_j)$. This implies
\begin{multline*}
 H_\sing(M_i)=\Hom_{A(M)}\left(\bigoplus P_j^{m_j},\hsing(M_i)\right)\\
=\Hom_{A(M)}(P_i^{m_i},\hsing(M_i))
=\End_{A(M)}(\hsing(M_i))^{m_i}
\\ 
=\End_{\langle M\rangle}(M_i)^{m_i}
\end{multline*}
and hence
\[ \dim_\Q H_\sing(M_i)=m_id_i\]
agrees with the characterisation of $m_i$ above.
\end{proof}

\begin{thm}[Dimension formula]\label{thm:dim_formula} Let $M\in\MM$ and consider $\langle M\rangle$. Let $A(M)$ be the Nori algebra. 
\begin{enumerate}
\item
Using Notation~\ref{not:mult} we have
\begin{equation}\label{eq:formula} \dim_\Q A(M)\leq\sum_{i=1}^n m^2_id_i + \sum_{i,j=1}^n\sum_{\gamma:i\pfad j}m_im_j\dim_\Q E(\gamma)
\end{equation}
(where the sum is taken over all paths $\gamma$ of positive length in $\cs$)
and
\[ \dim_\Q E(\gamma)=\frac{\prod\limits_{\text{$\varepsilon:v\to w$ on $\gamma$}}\dim_\Q {}_wE_v}{\prod\limits_{v\in\gammacirc} d_v}\]
(where $\gammacirc$ is the set of vertices on the path which are different from the end points).

\item \label{it:formula_2}We have equality if and only if $\langle M\rangle$ is hereditary.

\item Assume, moreover, that $\MM$ is hereditary.
Then there is $M^\sat\in\MM$  such that $\langle M^\sat\rangle=\langle M\rangle^\sat$ is a hereditary closure of $\langle M\rangle$. In this case the right hand side of \eqref{eq:formula} is equal to $\dim_\Q A(M^\sat)$.
\end{enumerate}
\end{thm}
\begin{proof}
The formula \eqref{eq:formula} follows from Proposition~\ref{L:dimPathAlg}. Indeed, the first sum collects the paths of length zero (i.e., the vertices). The second sum collects the paths of positive length.

Also by Proposition~\ref{L:dimPathAlg} we have equality if and only if
$A(M)$ is hereditary.

Finally, assume $\MM$ is hereditary. By Lemma~\ref{lem:is_ok}, 
we can apply the results of Chapter~\ref{sec3}. By Theorem~\ref{thm:sat_exists}, the hereditary closure
$\langle M\rangle^\sat$ exists. The existence of $M^\sat$ is Corollary~\ref{cor:Msat}. 
We have
$\langle M\rangle\subset\langle M^\sat\rangle$ and hence (or by construction)
$A(M^\sat)\twoheadrightarrow A(M)$. 

By the explicit description of $\langle M\rangle^\sat$ in Theorem~\ref{thm:sat_exists}, the data of Notation~\ref{not:mult} agrees for $M$ and $M^\sat$. Hence the same is true for the right hand sides of formula \eqref{eq:formula}. 
We have equality in \eqref{eq:formula} in case of $M^\sat$ by item~(\ref{it:formula_2}).
\end{proof}

\begin{rem}
\begin{enumerate}
\item
This confirms the expectation of Nesa in \cite[Expectation~3.4.3]{nesa}, who considered the case of an extension of two simple objects.
\item The estimates of Corollary~\ref{C:IE} can be used to sharpen the estimates if $\langle M\rangle$ is not hereditary.
\end{enumerate}
\end{rem}

\begin{thm}
The dimension estimate for $\dim_F\Per\langle M\rangle$ formulated in equation \eqref{eq:intro} in the introduction holds true.
If the Period Conjecture holds for $M$, then we have equality in the formula
in the two cases
\begin{itemize}
\item $\langle M\rangle$ is hereditary, i.e., all $\Ext^k_{\langle M \rangle}$ vanish for $k \geq 2$; or
\item  the maximal length of a path in the quiver of the species of $\langle M \rangle$ is $2$,
in particular $\Ext^k_{\langle M \rangle}$ vanishes for all $k \geq 3$.
\end{itemize}
\end{thm}
\begin{proof}
We have unconditionally 
\[ \dim_F\Per\langle M\rangle\leq \dim_\Q A(M)\]
with equality if and only if the period conjecture holds for $M$.
We combine this with the dimension formulas for $A(M)$ in Proposition~\ref{L:dimPathAlg} and Corollary~\ref{C:IE}.

If the period conjecture holds and $\langle M\rangle$ is hereditary, then we get equality by 
Theorem~\ref{thm:dim_formula}. If the period conjecture holds and the maximal length of a path in the quiver of the species is $2$, then we get equality by
 Corollary~\ref{c: lengthatmost2}.
\end{proof}

\begin{rem}\label{rem:reduce}
If $\MM$ is hereditary, then $\Per\langle M\rangle\subset \Per\langle M^\sat\rangle$. Expressing the elements of $\Per\langle M\rangle$ in a basis of
$\Per\langle M^\sat\rangle$ would allow us to determine all linear relations between them. 
Questions about periods can thus be reduced to the hereditary case.
Note, however, that this approach does not give a simple dimension formula, but rather an algorithm in each case.
\end{rem}

\begin{rem}\label{rem:brown}
We explain the relation with Brown's algorithmic approach in \cite{brown-phys} that applies in special cases but does not produce explicit dimension formulae.
He takes the Tannakian point of view, which has been prominent in most of the literature linking motives and the Period Conjecture. (The relation to our formulation is, for example, spelled out in \cite{huber_galois}.) The \emph{motivic Galois group} $G_\mot(M)$ of $M$ is the Tannakian dual of the rigid tensor category $\langle M\rangle^\tensor$ generated by $M$. It is an algebraic group over $\Q$. The Period Conjecture relates the Hopf algebra $\Oh(G_\mot(M))$ to the algebra generated by the periods of $M$. The coalgebra $DA(M)$ dual to our $A(M)$ agrees with  the subspace of matrix coefficients
of $\Oh(G_\mot(M))$, see \cite[Proposition~4.2.2]{nesa}.

Under the assumption that $\langle M\rangle^\tensor$ is hereditary, Brown explains an algorithm to describe the linear relations between the elements of the Hopf algebra of the unipotent radical of $G_\mot(M)$ in terms of the coradical filtration. 
 It remains open how to combine this with information on the reductive quotient. In fact, the examples of Nesa in \cite[Chapter~5]{nesa} illustrate that the linear spaces of periods are not well-behaved under the decomposition of the algebraic group into unipotent radical and reductive quotient.
\end{rem}


\section{Periods of \texorpdfstring{$1$}{1}-motives}\label{sec:1-mot}
In this section we specialise our considerations to
$\MM=\onemot$, the category of iso-$1$-motives over $F=\Qbar$; see Section~\ref{ssec:cat_1} for a more detailed description. 
This improves and clarifies the partial results of \cite[Part~Four]{huber-wuestholz}, which were formulated in ad hoc terms specific to the situation.

We will only recall those facts about $1$-motives that are needed to apply our dimension formulas. 
 This category was introduced by Deligne in \cite{hodge3} in order to capture the homological properties of algebraic varieties in homological degree $1$. Actually, their periods (also called $1$-periods) are the same as the periods of curves, see \cite[Summary~12.11]{huber-wuestholz}. 

The category is $\Q$-linear, abelian and hereditary by \cite[Prop.~3.2.4]{orgogozo}. The weight filtration on  $\onemot$ is concentrated in degrees $-2,-1,0$. This implies that the paths in the (infinite) quiver of the species of $\onemot$ have length at most  $2$. As before, we write $\langle M\rangle$ for the full subcategory of $\onemot$ closed under subquotients and containing $M$.  We denote its Nori algebra $A(M)$, see Definition~\ref{defn:Nori-alg}. As pointed out above,
\[ \langle M\rangle \isom \modules{A(M)}.\]

\begin{thm}Let $M\in\onemot$. Let $M^\ss$ be the sum of the simple subquotients of $M$ without multiplicities. 
Then $\langle M\rangle$ has global dimension at most $2$. 
It is equivalent to the category of finite dimensional modules over the finite dimensional $\Q$-algebra $B/\ci$
\[ \langle M\rangle\isom \modules{B/\ci}\]
 where $B$ is the tensor algebra of $E:=D\Ext^1_{\langle M\rangle}(M^\ss, M^\ss)$ over $\End(M^\ss)$ 
\[
   B=\End(M^\ss)\oplus E\oplus E\tensor_{\End(M^\ss)} E\]
and
\[ \ci\isom D\Ext^2_{\langle M\rangle}(M^\ss,M^\ss).\]
Moreover, $B/\ci$ is the basic algebra attached to $A(M)$.
\end{thm}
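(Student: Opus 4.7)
The plan is to specialise the abstract machinery of Chapters~\ref{sec:ext}--\ref{sec:dim} to the subcategory $\langle M\rangle\subset\onemot$, the crucial input being that the weight filtration on $1$-motives is supported in weights $\{-2,-1,0\}$, so that the quiver of the species has no path of length $\geq 3$.

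First I would gather the structural data. Since $M$ has finite length in the abelian category $\onemot$, the simple subquotients form a finite list $S_1,\dots,S_n$ and $M^\ss=\bigoplus_i S_i$ is a well-defined finite object. By Corollary~\ref{cor:is_str_finitary} the category $\langle M\rangle$ is strongly finitary; Nori's theorem identifies it with $\modules{A(M)}$, and passing to the basic algebra $B'$ Morita-equivalent to $A(M)$ (Corollary~\ref{cor:basic}) and applying Proposition~\ref{P:SpeciesWithRel} gives a presentation $B'\isom k\cs/\ci_0$, where $\cs=\cs_{\langle M\rangle}$ is the species of $\langle M\rangle$ and $\ci_0$ is an admissible ideal.

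Next I would identify $k\cs$ with the tensor algebra $B$ and $\ci_0$ with $D\Ext^2_{\langle M\rangle}(M^\ss,M^\ss)$. The key observation is that $\Ext^1_{\langle M\rangle}(S_j,S_i)$ vanishes unless the weight of $S_j$ is strictly greater than that of $S_i$ (as used in the proof of Lemma~\ref{lem:is_ok}); since only three weights occur, the quiver $Q_\cs$ has no path of length $\geq 3$, hence $\rr_{k\cs}^3=0$. Writing $D=\End(M^\ss)=\prod_i D_i$ and $E=D\Ext^1_{\langle M\rangle}(M^\ss,M^\ss)=\bigoplus_{i,j}{}_iE_j$, the tensor-ring expansion of $k\cs$ truncates to
\[ k\cs = D\oplus E\oplus (E\otimes_D E) = B. \]
Since $\rr_{k\cs}^3=0$ yields in particular $\ci_0\rad(k\cs)=\rad(k\cs)\ci_0=0$, Corollary~\ref{C:LongRelations} gives $\ci_0\isom D\Ext^2_{B'}(K,K)$ as $K$-$K$-bimodules; via the Morita equivalence $K=B'/\rad(B')$ corresponds to $M^\ss$, so this is $D\Ext^2_{\langle M\rangle}(M^\ss,M^\ss)$. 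The bound $\gldim\langle M\rangle\leq 2$ is then the corollary in Section~\ref{sec:ext} stating that $\rad(H)^{n+1}=0$ implies $\gldim A\leq n$, applied with $n=2$, and the statement that $B/\ci$ is the basic algebra attached to $A(M)$ is built into the construction via Corollary~\ref{cor:basic}.

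The main obstacle, though essentially dispatched by the abstract results, is keeping the chain of identifications coherent: one must verify that $K=B'/\rad(B')$ corresponds to $M^\ss$ under Morita equivalence (so that $\Ext^2_{B'}(K,K)$ agrees with $\Ext^2_{\langle M\rangle}(M^\ss,M^\ss)$ as $\End(M^\ss)$-bimodules), and that the bimodule structure on $\ci_0$ produced by Corollary~\ref{C:LongRelations} matches the natural one on $D\Ext^2$. These are bookkeeping issues once the radical-vanishing $\rr_{k\cs}^3=0$ is established, and the resulting description is then exactly the statement of the theorem.
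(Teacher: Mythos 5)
Your proposal is correct and follows essentially the same route as the paper: identify $\langle M\rangle$ with modules over a basic quotient of $k\cs$ via Proposition~\ref{P:SpeciesWithRel}, observe that the weight support in $\{-2,-1,0\}$ forces $\rr_{k\cs}^3=0$ so the tensor ring truncates to three summands, and apply Corollary~\ref{C:LongRelations} (together with Theorem~\ref{thm:generate_ideal}) to identify the relation ideal with $D\Ext^2$. The paper's proof is terser but identical in substance.
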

\begin{proof}We combine Proposition~\ref{P:SpeciesWithRel} with 
Theorem~\ref{thm:generate_ideal}. As the maximal length of a path in the species is $2$, we have $\rr^3=0$, so Corollary~\ref{C:LongRelations} applies.

The algebra $A(M)$ and $B/\ci$ are Morita-equvialent and $B/\ci$ is basic. 
\end{proof}

In keeping with the notation of
\cite{hodge3} and  \cite{huber-wuestholz}, we write
\begin{align*}
V_\sing:\onemot&\to\modules{\Q},\\
V_\dR:\onemot&\to\modules{\Qbar}
\end{align*}
instead of $\hsing|_{\onemot}$ and $\hdR|_{\onemot}$ for the singular and de Rham realisation, respectively. Moreover, there is a functorial isomorphism
\[ V_\sing\tensor_\Q\C\isom V_\dR\tensor_\Qbar\C.\]
We refer to \cite[Section~8.1]{huber-wuestholz} for details. 
For $M\in\onemot$, we denote $\Per\langle M\rangle$ its period space, see Definition~\ref{defn:period-space}.
The main result of \cite{huber-wuestholz} shows the period conjecture in this case.

\begin{thm}[Huber--Wüstholz, {\cite[Theorem~13.3]{huber-wuestholz}}]\label{thm:HW}
The period conjecture holds for all objects of $\onemot$, i.e.
\[ \dim_\Q A(M)=\dim_\Qbar\Per\langle M\rangle\]
for all $M\in\onemot$.
\end{thm}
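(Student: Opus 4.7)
The plan is to establish the two opposite inequalities separately. The upper bound $\dim_\Qbar \Per\langle M \rangle \leq \dim_\Q A(M)$ is the unconditional inequality \eqref{eq:intro2} of the introduction: every endomorphism of $\hsing|_{\langle M \rangle}$ induces a $\Qbar$-linear relation among the period entries via the comparison isomorphism, so the space of periods is a quotient of the $\Qbar$-dual of $A(M)$. The substantive direction is the lower bound, and the principal tool is W{\"u}stholz's Analytic Subgroup Theorem, applied to a commutative algebraic group whose Lie algebra is naturally identified with $V_\dR(M)$.

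I would first reduce to a convenient geometric representative. Since $\onemot$ is hereditary by \cite[Prop.~3.2.4]{orgogozo}, Corollary~\ref{cor:Msat} shows that $\langle M \rangle$ equals its own saturation, so the dimension formula is the one of Proposition~\ref{L:dimPathAlg} applied directly to the species of $\langle M \rangle$. By Deligne's structure theorem, $M$ can be represented up to isogeny by a complex $[L \xrightarrow{u} G]$ with $G$ an extension of an abelian variety $B$ over $\Qbar$ by a torus $T$ and $L$ a finitely generated lattice; passing to a larger $1$-motive if necessary, one may assume that every simple isomorphism class and every extension class in the species of $\langle M \rangle$ is realised by a sub-$1$-motive.

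The second step is to interpret period entries analytically. The comparison $V_\sing \tensor_\Q \C \cong V_\dR \tensor_\Qbar \C$ is computed using the universal vectorial extension $G^\natural$ of $G$: periods of $M$ appear as the coefficients, in a $\Qbar$-basis of $\Lie(G^\natural)$, of analytic lifts of the algebraic points $u(\ell) \in G(\Qbar)$ for $\ell \in L$, together with the periods of the abelian and toric parts. A nontrivial $\Qbar$-linear relation among these period numbers corresponds to a proper $\Qbar$-subspace $W \subset \Lie(G^\natural)$ containing the analytic logarithms of a set of algebraic points, and the Analytic Subgroup Theorem forces $W$ to be the Lie algebra of an algebraic subgroup of $G^\natural$ defined over $\Qbar$.

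The main obstacle is to translate such an algebraic subgroup back into a motivic endomorphism of $\hsing|_{\langle M\rangle}$, so that the resulting relation is already accounted for by $A(M)$. This dictionary is the technical core of \cite[Part~Four]{huber-wuestholz}: algebraic sub-extensions of $G^\natural$ correspond to sub-$1$-motives of $M^n$, which in turn produce endomorphisms of $\hsing|_{\langle M \rangle}$ compatible with the weight filtration, the Hodge filtration and the lattice, hence elements of $A(M)$. Once this correspondence is established, every $\Qbar$-linear relation among the periods of $M$ arises from $A(M)$, so the natural surjection from the dual of $A(M)$ onto $\Per\langle M\rangle$ is an isomorphism and the two dimensions coincide.
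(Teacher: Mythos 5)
The paper does not prove this theorem; it is simply cited from \cite[Theorem~13.3]{huber-wuestholz}, and the introduction confirms that the cited proof deduces the result from W\"ustholz's Analytic Subgroup Theorem. Your outline---the unconditional upper bound \eqref{eq:intro2} from functoriality, plus the lower bound via the Analytic Subgroup Theorem applied to the universal vectorial extension $G^\natural$, with the dictionary between algebraic subgroups of $G^\natural$ and sub-$1$-motives as the technical core---correctly captures the strategy of that reference. There is no internal argument in the present paper to compare against, so this is necessarily a paraphrase of an external proof rather than something that can be checked here.

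One intermediate claim in your reduction step is false, and the paper goes out of its way to warn against exactly this confusion. You assert that because $\onemot$ is hereditary, Corollary~\ref{cor:Msat} shows $\langle M\rangle$ equals its own saturation, so that Proposition~\ref{L:dimPathAlg} applies with equality. That is not so: hereditarity of the ambient category $\onemot$ does not imply hereditarity of a full subcategory $\langle M\rangle$ closed under subquotients, because the relevant $\Ext^2$ is computed inside $\langle M\rangle$, not in $\onemot$. The paper's result for $1$-motives gives only $\gldim\langle M\rangle\leq 2$, and Section~\ref{ssec:ex_nh} exhibits an explicit $M\in\onemot$ with $\Ext^2_{\langle M\rangle}(\Q(0),\Q(1))\neq 0$, so that $\langle M\rangle\subsetneq\langle M\rangle^\sat$. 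Corollary~\ref{cor:Msat} describes the saturation, not $\langle M\rangle$ itself, and for such $M$ Proposition~\ref{L:dimPathAlg} yields a strict inequality, the discrepancy being precisely the $\Ext^2$-correction term in Theorem~\ref{thm:main1}. This slip is independent of the Analytic Subgroup Theorem portion of your sketch and does not affect the cited result, but the reduction as you have written it does not go through.
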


\subsection{The category of \texorpdfstring{$1$}{1}-motives}\label{ssec:cat_1}
We now  describe the category $\onemot$ in more detail. Objects are complexes of abelian group schemes over $\Qbar$ of the form
 \[ M=[L\xrightarrow{f} G]\]
where $G$ is a semi-abelian variety over $\Qbar$ (i.e., an extension $0\to T\to G\to A\to 0$ of an abelian variety $A$ by a torus $T\isom\Gm^r\isom (\Qbar^*)^r$), $L\isom\Z^s$ a finitely generated free abelian group, and $f$ a group homomorphism. Morphisms are morphisms of complexes of abelian group schemes tensored with $\Q$. So they are given by compatible pairs $(\phi_L,\phi_G)$ where $\phi_L$ is a homomorphism of abelian groups
and $\phi_G$ a morphism of algebraic groups.

There are three types of simple objects:
\begin{enumerate}
\item $\Q(0):=[\Z\to 0]$, pure of weight $0$ and with field of endomorphisms $D(\Q(0))=\Q$, singular realisation $V_\sing(\Q(0))=\Q$ and multiplicity 
\[ m_0=\dim_{\Q}\Vsing{\Q(0)}=1;\]
\item $\Q(1):=[0\to\Gm]$, pure of weight $-2$ and with field of endomorphisms $D(\Q(1))=\Q$, singular realisation $V_\sing(\Q(1))=\Q$ and multiplicity 
\[ m_1=\dim_{\Q}\Vsing{\Q(1)}=1;\]
\item $\ul{A}:=[0\to A]$ for a simple abelian variety $A$ of dimension $g_A$, pure of weight $-1$ with a skew field $D(\ul{A})$  of endomorphisms of $\Q$-dimension $d_A$, singular realisation $V_\sing(\ul{A})=H_1(A,\Q)$ of $\Q$-dimension $2g_A$ and hence multiplicity 
\[ m_A=\dim_{D(\ul{A})}\Vsing{\ul{A}}=2g_A/d_A.\]
\end{enumerate}
\begin{rem}
We have  $D(\ul{A})=\End(A)\tensor\Q$. A common notation for this algebra in the theory of abelian varieties is
$E^0(A)$. In \cite{huber-wuestholz}, its dimension was denoted $e_A$ instead of $d_A$.
\end{rem}

\begin{cor}
Let $\csmult_M=(D_i^{m_i},D\Ext^1_{\langle M\rangle}(S_j^{m_j},S_i^{m_i}))$ be the species with multiplicities defined by $\langle M\rangle$ with multiplicities as above. Then
\[ A(M)\isom \Q\csmult_A/\ci\]
and
\[ \ci\isom D\Ext^2_{\langle M\rangle}\left(\bigoplus_i S_i^{m_i},\bigoplus_iS_i^{m_i}\right) \ .\]
\end{cor}
\begin{proof}This is Corollary~\ref{C:LongRelations} for the non-basic $A(M)$
and the hereditary $H=\Q\csmult_M$.
\end{proof}

\begin{ex}\label{ex:kummer}
For every $\alpha\in\Qbar^*$, we have the \emph{Kummer motive}
$M(\alpha)=[\Z\xrightarrow{1\mapsto \alpha}\G_m]$. It is an extension
\[ 0\to \Q(1)\to M(\alpha)\to \Q(0)\to 0.\]
This construction defines the \emph{Kummer map}
\[ \Qbar^*\to \Ext^1_{\onemot}(\Q(0),\Q(1)).\]
It is an isomorphism after passing from the abelian group $\Qbar^*$ to the $\Q$-vector space $\Qbar^*\tensor_\Z\Q$.

The species of $\langle M(\alpha)\rangle$ consists of $D_0=D_1=\Q$ and ${}_0E_1=D\Q[\alpha]$ (the dual of the $\Q$-subspace
 of $\Qbar^*\tensor_\Z\Q$ generated by $\alpha\tensor 1$). The space $\Q[\alpha]$ is zero if and only if $\alpha$ is torsion, i.e., a root of unity. 
The path algebra of the species is the path algebra of the quiver with two vertices and a single edge (or no edge if $\alpha$ is a root of unity). It has dimension $3$ (or $2$, if $\alpha$ is a root of unity). There are no relations because the maximal length of a path is $1$.

 The period space of $M(\alpha)$ is generated by $1$ (the period of $\Q(0)$),
$2\pi i$ (the period of $\Q(1)$) and $\log(\alpha)$.  Theorem~\ref{thm:HW} says that $\Per\langle M(\alpha)\rangle$ also has dimension $3$ unless $\alpha$ is a root of unity. This means
that $2\pi i$ and $\log(\alpha)$ are transcendental (Lindemann's Theorem) and
$\Qbar$-linearly independent (Gelfond-Schneider).
\end{ex}

\begin{ex} Let $\alpha_1,\dots,\alpha_n\in\Qbar^*$. We consider
\[ M=M(\alpha_1)\oplus\dots\oplus M(\alpha_n)\]
where $M(\alpha_i)$ is the Kummer motive of the preceding example.
The species of $M$ has two vertices $0,1$ with simple algebras
$D_0=D_1=\Q$ and 
\[ D({}_0E_1)=\langle \alpha_1\tensor 1,\dots,\alpha_n\tensor 1\rangle_\Q\subset \Qbar^*\tensor_\Z\Qbar.\]
Its $\Q$-dimension is equal to the rank of the subgroup of $\Qbar^*$ generated multiplicatively by $\alpha_1,\dots,\alpha_n$. Hence the $\Q$-dimension of the path algebra is 
\[ \delta=2+\rk\langle\alpha_1,\dots,\alpha_n\rangle.\]
Translated to periods this means that the $\Qbar$-vector space 
\[ \langle 1,2\pi i,\log(\alpha_1),\dots,\log(\alpha_n)\rangle_\Qbar\subset\C\]
has dimension $\delta$. This is Baker's Theorem.
\end{ex}

\subsection{Dimension formulas for \texorpdfstring{$1$}{1}-motives}\label{ssec:dim_1}
Given $M\in\onemot$, the quiver of the species of $\langle M\rangle$ is a subquiver of the following quiver:
\begin{equation*}\begin{tikzpicture}[description/.style={fill=white,inner sep=2pt}]
   \matrix (m) [matrix of math nodes, row sep=3em,
                 column sep=2.5em, text height=1.5ex, text depth=0.25ex,
                 inner sep=2pt, nodes={inner xsep=0.3333em, inner
ysep=0.3333em}] at (0, 0)
    {  &&D_1 \\
        &&D_2   \\
       D_0&&\ldots && D_\infty \\
       &&D_{n} \\
    };

 \path[->] ($(m-3-1.east)$)  edge  node [fill=white, scale=0.75] [midway] {$E_{01}$} ($(m-1-3.west)$) ;
 \path[->] ($(m-3-1.east) $) edge  node [fill=white, scale=0.75] [midway] {$E_{02}$} ($(m-2-3.west)$) ;
 \path[->] ($(m-3-1.east) $) edge  node [fill=white, scale=0.75] [midway] {$E_{0i}$} ($(m-3-3.west)$) ;
  \path[->] ($(m-3-1.east) $) edge  node [fill=white, scale=0.75] [midway] {$E_{0n}$} ($(m-4-3.west)$) ;

 \path[->] ($(m-1-3.east)$)  edge  node [fill=white, scale=0.75] [midway] {$E_{1\infty}$} ($(m-3-5.west)+(0, 2mm)$) ;
 \path[->] ($(m-2-3.east) $) edge  node [fill=white, scale=0.75] [midway] {$E_{2\infty}$} ($(m-3-5.west)+(0, 1mm)$) ;
 \path[->] ($(m-3-3.east) $) edge  node [fill=white, scale=0.75] [midway] {$E_{i\infty}$} ($(m-3-5.west)$) ;
  \path[->] ($(m-4-3.east) $) edge  node [fill=white, scale=0.75] [midway] {$E_{n\infty}$} ($(m-3-5.west)+(0, -1mm)$) ;

 \draw[ ->] ($(m-3-1.east) + (0mm,0mm)$) .. controls +(2.5mm,-35mm) and
+(-2.5mm,-35mm) .. node [fill=white, scale=0.75] [midway] {$E_{0\infty}$} ($(m-3-5.west) + (0mm,-2mm)$);
 \end{tikzpicture}\end{equation*}

As already discussed in \cite[Chapter~16]{huber-wuestholz}, the period space of
$M$ decomposes into contributions from different constituents. Under the period conjecture, Theorem~\ref{thm:HW}, this is precisely the decomposition of 
Theorem~\ref{thm:dim_formula}. The dictionary between \cite[Chapter~16]{huber-wuestholz} and Definition~\ref{defn:delta} is as follows:\\

\begin{center}
\begin{tabular}{cccl}
$\delta_\alg(M)$&	$=$	&$\delta(\Q(0))$ &algebraic periods\\[0.3ex]
$\delta_2(M)$	&$=$	& $\sum_Am^2_A\delta(\ul{A})$& abelian periods of the second kind\\[0.3ex]
$\delta_\tate(M)$&$=$		& $\delta(\Q(1))$ & Tate periods\\[0.3ex]
$\delta_3(M)$	&$=$	& $\sum_Am_A\delta(\ul{A}\Q(1))$ & 3rd kind wrt closed paths\\[0.3ex]
$\delta_\inc(M)$&$=$	& $\sum_Am_A\delta(\Q(0)\ul{A})$ &2nd kind wrt non-cl. paths\\[0.3ex]
$\delta_\mix(M)$&$=$	& $\delta(\Q(0)\Q(1))$ &3rd kind wrt non-cl. paths
\end{tabular}
\end{center}

\noindent where all sums are over the simple abelian subquotients of $M$, i.e., the vertices of weight $-1$ in the quiver. The names refer to the situation in which these periods appear ``in nature''. E.g., the periods of $\Q(0)$ are algebraic numbers; the periods of $\ul{A}$ can be represented as integrals of algebraic differential form of the second kind (all residues vanish) over closed paths on a smooth projective curve over $\Qbar$. For a complete discussion see \cite[Chapter~14]{huber-wuestholz}.

\begin{thm}\label{thm:main1}
Let $M\in\onemot$ and $\ul{A_1},\dots,\ul{A_n}$ be the simple abelian subquotients of $M$  (without multiplicities). Then
\[ \dim_\Qbar\Per\langle M\rangle=\delta_\alg(M)+\delta_2(M) +\delta_\tate(M)+\delta_3(M)+\delta_\inc(M)+\delta_\mix(M)\]
where $\delta_\alg(M)$ and $\delta_\tate(M)$ take the values $0$ or $1$, depending on the vanishing or non-vanishing of the lattice part $L$ or the torus part $T$ of $M$, respectively. Moreover,
\begin{align*}
\delta_2(M)&=\sum_{i=1}^n  4g_{A_i}^2/d_{A_i}\\
\delta_3(M)&=\sum_{i=1}^n  2g_{A_i}\dim_{D(\ul{A_i})} \Ext^1_{\langle M\rangle}(\ul{A_i},\Q(1))\\
\delta_\inc(M)&=\sum_{i=1}^n 2g_{A_i}\dim_{D(\ul{A_i})} \Ext^1_{\langle M\rangle}(\Q(0),\ul{A_i})\\
\delta_\mix(M)&=\dim_\Q\Ext^1_{\langle M\rangle}(\Q(0),\Q(1))-\dim_\Q\Ext^2_{\langle M\rangle}(\Q(0),\Q(1))\\
      & +\sum_{i=1}^n d_{A_i}\left(\dim_{D(A_i)}\Ext^1_{\langle M\rangle}(\Q(0)\ul{A_i})\cdot\dim_{D(A_i)}\Ext^1_{\langle M\rangle}(\ul{A_i},\Q(1))\right)
\end{align*}
with all $\Ext$-groups taken in $\langle M\rangle$.
We have
\begin{equation}\label{eq:trivial_bound} \delta_\mix(M)\leq lt\end{equation}
where $l$ is the rank of the lattice part of $M$ and $t$ is the dimension of 
the torus part of $M$.
\end{thm}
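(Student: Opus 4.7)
The plan is to combine Theorem~\ref{thm:HW}, which gives $\dim_\Qbar\Per\langle M\rangle=\dim_\Q A(M)$, with the structural formulas of Section~\ref{ssec:refined}. Since weights in $\onemot$ lie in $\{-2,-1,0\}$ and by Lemma~\ref{lem:is_ok} arrows in the quiver of $\cs_{\langle M\rangle}$ go strictly from higher to lower weight, every path has length at most $2$ and the only length-$2$ paths have shape $\Q(0)\to\ul{A_i}\to\Q(1)$ for the simple abelian subquotients $\ul{A_i}$ of $M$. The corollary in Section~\ref{ssec:refined} for path-length-at-most-$2$ algebras then applies to $A(M)$ and gives an equality expressing $\dim_\Q A(M)$ as vertex contributions plus length-$1$ edge contributions plus length-$2$ path contributions minus an $\Ext^2$ correction.

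Next I would match each summand with the corresponding $\delta$-symbol using Notation~\ref{not:mult} together with $m_0=m_1=d_0=d_1=1$ and $m_{A_i}=2g_{A_i}/d_{A_i}$. Vertex terms produce $\delta_\alg+\delta_\tate+\delta_2$ with $\delta_2=\sum_i 4g_{A_i}^2/d_{A_i}$. The length-$1$ edges $\Q(0)\to\ul{A_i}$ and $\ul{A_i}\to\Q(1)$ produce $\delta_\inc$ and $\delta_3$ once one rewrites $\dim_\Q\Ext^1=d_{A_i}\dim_{D(\ul{A_i})}\Ext^1$. The edge $\Q(0)\to\Q(1)$, the length-$2$ paths through each $\ul{A_i}$ (whose $E(\gamma)=D\Ext^1(\Q(0),\ul{A_i})\tensor_{D_{A_i}}D\Ext^1(\ul{A_i},\Q(1))$ has $\Q$-dimension equal to the product of $\Q$-dimensions divided by $d_{A_i}$, producing the $d_{A_i}$ prefactor after conversion to $D(\ul{A_i})$-dimensions), and the $\Ext^2$ correction (by weight reasons only $\Ext^2_{\langle M\rangle}(\Q(0),\Q(1))$ can be non-zero) together assemble into $\delta_\mix(M)$.

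For the bound $\delta_\mix(M)\leq lt$ I would apply Lemma~\ref{L:trivial_bound} to the generator $V=V_\sing(M)$ of $\modules{A(M)}$; the hypothesis $\langle V\rangle=\modules{A(M)}$ holds by Nori's equivalence $\langle M\rangle\isom\modules{A(M)}$. Using $A(M)\tilde{e}_i\isom P_i^{m_i}$ and the exactness of $\Hom_{A(M)}(P_i,-)$ one computes $\dim_\Q\tilde{e}_iV=m_i[V:S_i]d_i$; the composition multiplicities transport under the equivalence, and the weight filtration on $M$ gives $[M:\Q(0)]=l$ and $[M:\Q(1)]=t$. Hence $\dim_\Q\tilde{e}_0V=l$ and $\dim_\Q\tilde{e}_1V=t$, so that Lemma~\ref{L:trivial_bound} delivers $\delta_\mix(M)=\delta_{A(M)}(01)\leq lt$. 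The chief difficulty is not conceptual but meticulous bookkeeping: one must keep careful track of the interplay between $\Q$-dimensions, $D(\ul{A_i})$-dimensions and the multiplicities $m_i$ when translating between the algebraic species formula and the classical $\delta$-notation.
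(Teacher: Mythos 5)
Your proposal is correct and follows essentially the same route as the paper: apply Theorem~\ref{thm:HW} to reduce to $\dim_\Q A(M)$, use Lemma~\ref{lem:mult} for the multiplicities, invoke the path-length-$\leq 2$ Corollary from Section~\ref{ssec:refined}, match terms with the $\delta$-dictionary (including the $1/d_{A_i}$ then $d_{A_i}$ conversion for $E(\gamma)$), and finish the bound $\delta_\mix\leq lt$ via Lemma~\ref{L:trivial_bound} applied to $V=V_\sing(M)$. Your explicit computation of $\dim_\Q \tilde e_i V$ via $\Hom_{A(M)}(P_i,V)$ and composition multiplicities is a bit more detailed than the paper's one-line evaluation $\dim_\Q e_{\Q(0)}V\cdot\dim_\Q e_{\Q(1)}V=lt$, but it is the same argument.
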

\begin{proof}By Theorem~\ref{thm:HW}, we have to
compute the dimension of the Nori algebra $A(M)$. It is not basic. By Lemma~\ref{lem:mult} and the computation at the beginning of Section~\ref{ssec:cat_1} the vertex
corresponding to the simple abelian variety $A_i$  appears with muliplicity $m_{A_i}=2g_{A_i}/d_{A_i}$. The vertices corresponding to $\Q(0)$ and $\Q(1)$ appear with multiplicity $1$. 

We now apply Corollary~\ref{c: lengthatmost2}  (the case of maximal path length at most $2$) in terms of the dictionary explained above.

Note that 
\begin{align*}
m_{A_i}^2\delta(\ul{A_i})&=m_{A_i}^2d_{A_i}= \frac{(2g_{A_i})^2}{d_{A_i}},\\
m_{A_i}\delta(\Q(0)\ul{A_i})&=m_{A_i}\dim_{\Q}\Ext^1_{\langle M\rangle}(\Q(0),\ul{A_i})\\
   &= \frac{2g_{A_i}}{d_{A_i}}d_{A_i}\dim_{D(A_i)}\Ext^1_{\langle M\rangle}(\Q(0),\ul{A_i}),\\
m_{A_i}\delta(\ul{A_i}\Q(1))&=m_{A_i}\dim_{\Q}\Ext^1_{\langle M\rangle}(\ul{A_i},\Q(1))\\
&=
    \frac{2g_{A_i}}{d_{A_i}}d_{A_i}\dim_{D(A_i)}\Ext^1_{\langle M\rangle}(\ul{A_i},\Q(1))\ .
\end{align*} 
In computing the contribution of the path $\Q(0)\to \ul{A_i}\to\Q(1)$ to
$\delta_\mix(M)$ we use
\begin{align*}
\dim_\Q D\Ext^1_{\langle M\rangle}&(\Q(0),\ul{A_i})\tensor_{D(\ul{A_i})}D\Ext^1_{\langle M\rangle}(\ul{A_i},\Q(1)) \\
 &=\frac{1}{d_{A_i}}\dim_\Q\left(D\Ext^1_{\langle M\rangle}(\Q(0),\ul{A_i})\tensor_{\Q}D\Ext^1_{\langle M\rangle}(\ul{A_i},\Q(1))\right) \\
 &= \frac{1}{d_{A_i}} \dim_\Q\Ext^1_{\langle M\rangle}(\Q(0),\ul{A_i})\cdot\dim_\Q\Ext^1_{\langle M\rangle}(\ul{A_i},\Q(1)) \\
&=d_{A_i}\dim_{D(\ul{A_i})}\Ext^1_{\langle M\rangle}(\Q(0),\ul{A_i})\cdot\dim_{D(\ul{A_i})}\Ext^1_{\langle M\rangle}(\ul{A_i},\Q(1))\ . 
\end{align*}

For the upper bound, we apply Lemma~\ref{L:trivial_bound} to $A=A(M)$, the module $V=V_\sing(M)$ and the projectors corresponding to the simple objects 
$\Q(0)$ and $\Q(1)$:
\[ \delta_\mix(M)=\dim_\Q e_{\Q(0)} A(M)e_{\Q(1)}\leq \dim_\Q e_{\Q(0)}V\cdot\dim_\Q e_{\Q(1)}V=lt.\]
\end{proof}

\begin{ex}We come back to the Kummer motive introduced in Example~\ref{ex:kummer}. Assume that $\alpha\in\Qbar^*$ is not a root of unity. Then the non-vanishing contributions are
\[ \delta_\alg(M(\alpha))=\delta_\tate(M(\alpha))=\delta_\mix(M(\alpha))=1\]
for a total of
\[ \delta(M(\alpha))=3\]
as before. Indeed, the period $\log(\alpha)=\int_1^\alpha d t/t$ is an integral of the third kind (non-trivial residues) over a non-closed path. As such it contributes to $\delta_\mix(M(\alpha))$.
\end{ex}

A non-hereditary example will be discussed in detail in Section~\ref{ssec:ex_nh}.

\begin{rem}We explain the relation of Theorem~\ref{thm:main1} to the dimension formula in \cite{huber-wuestholz}. \begin{enumerate}
\item The main new insight is the formula for $\delta_\mix(M)$, the most complicated contribution. 
\item Nesa already established in \cite[Chapter~3]{nesa}, see also \cite[Section~3]{nesa-ext} that the ``ranks'' appearing in \cite[Notation~15.2]{huber-wuestholz} have a more conceptual interpretation as the $D(\ul{A})$-dimension of $\Ext^1$ in $\langle M\rangle$. We recover his result.
\item
If $M$ is the sum of a saturated motive and  a Baker motive in the terminology of \cite[Chapter~15]{huber-wuestholz}, 
the category $\langle M\rangle$ is hereditary by \cite[Lemma~5.7]{huber-structure}, so the $\Ext^2$-contribution vanishes. 
We reproduce the formula in the saturated case, \cite[Theorem~15.3]{huber-wuestholz}. This includes cases where not all types of vertices appear in the quiver.
The connection between the hereditarity condition and the saturated case (and the description by the tensor algebra) is also shown by Hörmann in \cite{hoermann-notiz}.
\item For general $M$, the hereditary closure of $\langle M\rangle$ (see Chapter~\ref{sec3}) can be used in order to find an upper bound for the dimension of the period space. The existence of a hereditary closure is Theorem~\ref{thm:sat_exists}. It was also shown by hand in \cite[Theorem~15.3.~(2)]{huber-wuestholz}. The dimension estimate of \cite[Theorem~15.3]{huber-wuestholz} is  the same estimate as above. It omits the correction by $\dim_\Q\Ext^2(\Q(0),\Q(1))$.
\item The upper bound for $\delta_\mix(M)$ in Theorem~\ref{thm:main1} \eqref{eq:trivial_bound} is proved directly in \cite[Corollary~17.2]{huber-wuestholz}.
Chapter~17 of \cite{huber-wuestholz} gives an algorithm to determine the
dimension of the contribution $\delta_\mix(M)$  in the general case. 
Our new description in terms of $\Ext^2$ provides a clean conceptual explanation. \end{enumerate}
\end{rem}

\subsection{A non-hereditary example}\label{ssec:ex_nh}

We use the construction of
\cite[Section~11.3]{huber-wuestholz}. Let $E$ be a CM-elliptic curve, so that
$F:=D_E$ is a quadratic imaginary field extension of $\Q$. Let 
$G$ be an extension of $E$ by $\Gm$ which is not split, even up to isogeny. It corresponds to a non-torsion element $\chi$ of $E^\vee(\Qbar)$ (where $E^\vee$ is the dual abelian variety, isomorphic to $E$ in the $1$-dimensional case). Let
$\alpha\in G(\Qbar)$ be such that the image $\bar{\alpha}$ in $E(\Qbar)$ is not torsion.
Let $M=[\Z\xrightarrow{1\mapsto \alpha}G]$. Its periods are explicitly described in \cite[Chapter~18]{huber-wuestholz} as elliptic integrals.

We apply our machine. 
The simple objects of $\langle M\rangle$ are $\Q(0)$, $\Q(1)$ and $\ul{E}$. 
The species has $D_0=D_1=\Q$ and $F:=D_{\ul{E}}$ imaginary quadratic over $\Q$. 
All multiplicities are equal to $1$, so
the algebra $A(M)$ is basic. By \cite[Prop.~3.8, Prop.~3.11]{nesa-ext} 
both
\begin{align*}
{}_{\ul{E}}E_1&=D\Ext^1(\ul{E},\Q(1))\\
{}_0E_{\ul{E}}&=D\Ext^1(\Q(0),\ul{E})
\end{align*}
 are of dimension $1$ over $F$, so of dimension $2$ over $\Q$. 
Let 
\[ \tau=\dim_\Q{}_1E_0=\dim_\Q\Ext^1_{\langle M\rangle}(\Q(0),\Q(1)).\]
(We will show that $\tau=0$ below.)
By Theorem~\ref{thm:main1}
\begin{align*}
\delta_\alg(M)&=\delta_\tate(M)=1,\\
 \delta_2(M)&=\delta_3(M)=\delta_\inc(M)=2,\\
\delta_\mix(M)&=2+\tau-\dim_\Q\Ext^2_{\langle M\rangle}(\Q(0),\Q(1)).
\end{align*}
We have $l=t=1$ in $M$, hence we also have
\[ \delta_\mix(M)\leq 1.\]
Together this implies that the category is not hereditary. 

We can be more precise. Let $\Sh_M$ be the species of $\langle M\rangle$. We have 
\[ A(M)\isom \Q\Sh_M/\Ih\]
for the admissible ideal $\Ih\isom D\Ext^2_{\langle M\rangle}(\Q(0),\Q(1))$.
The base change of $\Q\Sh_M$ to  $\Qbar$ is the path algebra of the quiver
 considered in Example~\ref{ex:dimension}. The motive $M$ corresponds to the representation $V$ considered there. The ideal of relations is admissible. By Example~\ref{ex:dimension}, we have $\tau=0$ and $\dim\Ih=1$. This implies
\[ \dim_\Q\Ext^2_{\langle M\rangle}(\Q(0),\Q(1))=1\]
and
\[ \delta_\mix(M)=1.\]
\begin{rem}
This is the same number obtained by Huber--Wüstholz by hand (\cite[Section~11.3]{huber-wuestholz} or their algorithm for $\delta_\mix(M)$ (\cite[Example~17.16]{huber-wuestholz}).
\end{rem}
We describe the non-trivial $2$-extension. The quiver representation
$V$ in Example~\ref{ex:dimension}  (for $\tau=0$) corresponds to the projective left module $(\Qbar Q/\Ih) e_1$. This makes $M$ the projective cover of $\Q(0)$
in $\langle M\rangle$. The projective cover of $\ul{E}$ corresponds to the projective module $(\Qbar Q/\Ih)(e_2+e_3)$ in Example~\ref{ex:dimension}. It is the motive $\ul{G^\sat}$ where
\[ G^\sat=G\times_E\iota^*G\]
with $F=\Q(\iota)$. Here $\iota^*G$ is the pull-back of $G$ along $\iota:E\to E$. Note that it is isomorphic to $G$ (via $\iota$) as a semi-abelian variety, but not as an extension of $E$.
The character group of the torus part of $G^\sat$ is $X(T^\sat)=F\chi\subset E^\vee(\Qbar)_\Q$.
The motive $\Q(1)$ is projective itself. It corresponds to the idempotent $e_4$ in Example~\ref{ex:dimension}.

Hence, the complex
\[ \Q(1)\xrightarrow{\delta} \ul{G^\sat}\to M\]
is a projective resolution of $\Q(0)$. This implies
\[ \Ext^2_{\langle M\rangle}(\Q(0),\Q(1))=\Hom_{\langle M\rangle}(\Q(1),\Q(1))/\delta^*\Hom(\ul{G^\sat},\Q(1))\isom\Q.\]
The non-trivial $2$-extension is
\[ 0\to \Q(1)\to \ul{G}\xrightarrow{f} [\Z\xrightarrow{1\mapsto\bar{\alpha}} E]\to \Q(0)\to 0\]
where 
\[ f:[0\to G]\twoheadrightarrow [0\to E]\xrightarrow{\iota}[0\to E]\hookrightarrow [\Z\to E]\]
uses multiplication by $\iota\in F\smallsetminus\Q$.

By Theorem~\ref{thm:sat_exists}, there is a minimal full hereditary category
closed under subquotients
$\langle M\rangle^\sat\subset\onemot$ containing $M$. We want to describe it in our case. The category is equivalent to
$\modules{\Q\Sh_M}$.
By Corollary~\ref{cor:Msat} it is generated by an object $M^\sat$, corresponding to $\Q\Sh_M$ as a module over itself. We already have computed the images of the idempotents modulo $\Ih$. This gives
\[ M^\sat=\Q(1)\oplus \ul{G^\sat} \oplus [\Z\xrightarrow{\phi} G^\sat]\]
where $\phi(1)$ is induced by $(\alpha,\iota^{-1}(\alpha))\in G\times \iota^*(G)$.
(Note that they have the same image in $E$ under the structure maps of
$G$ and $\iota^*(G)$.)
Actually, the category $\langle M\rangle^\sat$ is generated by $[\Z\to G^\sat]$,
the projective cover of $\Q(0)$ in the saturation, since $\Q(1)$ and $G^\sat$ are subobjects.

\begin{rem} This does not agree with the construction of an object $M_\sat$ in \cite[Chapter~15]{huber-wuestholz}. We will denote it
$M^{\HW}$ for distinction. Let us review $M^{\HW}$ in our example. It is of the form
\[ M^{\HW}=[ \Oh_F\xrightarrow{\psi} G^\sat]\]
with the same $G^\sat$ as above and $\psi$ the $\Oh_F$-linear map mapping
$1$ to $(\alpha,\beta)$ for arbitrary $\beta\in \iota^*(G)$ in the preimage of
$\bar{\alpha}$. The category $\langle M^{\HW}\rangle$ is indeed hereditary and contains $\langle M\rangle$. Hence
\[ \langle M\rangle^\sat\subset \langle M^{\HW}\rangle.\]
The difference $\beta-\iota^{-1}(\alpha)$ defines an element in
$\Ext^1_{\langle M^{\HW}\rangle}(\Q(0),\Q(1))$.
It is not an extension in $\langle M\rangle^\sat$ because 
\[ \Ext^1_{\langle M\rangle^\sat}(\Q(0),\Q(1))=\Ext^1_{\langle M\rangle}(\Q(0),\Q(1))=0.\]
Hence $\langle M\rangle^{\sat}\subsetneq \langle M^{\HW}\rangle$ if $\beta\neq\iota^{-1}(\alpha)$.
\end{rem}


\section{Mixed Tate Motives}\label{sec:MTM}
We apply our methods to periods of Mixed Tate Motives over a fixed number field $F$. This allows us to recover the dimension estimate of Terasoma in \cite{terasoma} and  Deligne and Goncharov in \cite{deligne-goncharov}, see Section~\ref{ssec:MTMZ}.

\subsection{The category}

Levine constructed in \cite{LeMTM}, see also \cite{HuberKahn}, a $\Q$-linear abelian category $\MTM_F$ of Mixed Tate Motives over $F$ inside Voevodsky's triangulated category of geometric motives $\DMgm(F)$. Its simple
objects are $\Q(i)$ for $i\in \Z$ with endomorphism ring $\Q$.  The construction relies on the identification 
\begin{multline*}
 \Ext^i_{\MTM_F}(\Q(n),\Q(n+m)):= \Hom_{\DMgm(F)}(\Q(n),\Q(n+m)[i]) \\
\isom \Hom_{\DMgm(F)}(\Q,\Q(m)[i])\isom K_{2m-i}(F)^{(m)}_\Q\isom 
\begin{cases}
\Q &i=0,m=0,\\
K_{2m-1}(F)_\Q &i=1,\\
0& \text{else.}
\end{cases}
\end{multline*}

By \cite[Proposition~12.2]{borel}  they are finite dimensional for $m\neq 1$. 
They vanish for $m\leq 0$. This vanishing induces a weight filtration.  To be consistent with the general setting, $\Q(n)$ is given weight $-2n$. The category is hereditary, again by Borel's computation. The assumptions of Section~\ref{sec3} are satisfied.

Note, moreover, that
\[ \Ext^1_{\MTM_F}(\Q(0),\Q(1))\isom F^\times_\Q:=F^\times\tensor_\Z\Q.\]

\begin{lemma}\label{lem:construct_tate}
For any choice of a finite dimensional subvector space
$V\subset F^\times_\Q$ there is a maximal full abelian subcategory $\MTM_{F,V}$ of $\MTM_F$ with the same simple objects and such that
\[ \Ext^1_{\MTM_{F,V}}(\Q(0),\Q(m))=\begin{cases} \Ext^1_{\MTM_F}(\Q(0),\Q(m)) &m\neq 1,\\
V&m=1.\end{cases}
\]
Its objects are the $X\in \MTM_F$ whose subquotients of length  $2$ (i.e., $1$-extensions of simple objects) are of the type prescribed by the above $\Ext^1$'s.
The category is hereditary.
\end{lemma}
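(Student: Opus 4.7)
The plan is to define $\MTM_{F,V}$ explicitly by the length-2 subquotient condition of the second sentence of the lemma and to verify its properties directly, using hereditarity of the ambient category $\MTM_F$ rather than invoking the finite-simple machinery of Corollary~\ref{cor:finite_ext}.

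First I would check that $\MTM_F$ satisfies Setup~\ref{setup:artin}: objects have finite length, $\Hom$-spaces are finite dimensional, hereditarity is recorded in the excerpt, and the partial order $\Q(n)<\Q(m)\iff n<m$ on the simples matches the vanishing of $\Ext^1_{\MTM_F}(\Q(n),\Q(m))$ for $m\le n$. I then define $\MTM_{F,V}$ as the full subcategory of $X\in\MTM_F$ such that every length-2 subquotient $Y$ of the form $0\to \Q(m)\to Y\to \Q(0)\to 0$ has extension class lying in the prescribed subspace of $\Ext^1_{\MTM_F}(\Q(0),\Q(m))$---that is, in $V$ when $m=1$ and in the whole group otherwise. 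Closure of $\MTM_{F,V}$ under subquotients is tautological. Closure under finite direct sums reduces to the observation that length-2 subquotients of $X\oplus Y$ of the specified form represent $\Q$-linear combinations of extension classes from $X$ and from $Y$ alone, which remain in the prescribed subspace because this subspace is $\Q$-linear; this is a standard pullback-pushout computation that uses $\End(\Q(n))=\Q$. Since kernels, cokernels, and images of morphisms in $\MTM_{F,V}$ are subquotients and therefore stay inside, $\MTM_{F,V}$ is abelian.

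The identification of $\Ext^1_{\MTM_{F,V}}(\Q(0),\Q(m))$ with the prescribed subspace is then immediate, since 1-extensions in $\MTM_{F,V}$ are themselves length-2 objects of $\MTM_{F,V}$. For hereditarity, any 2-fold Yoneda extension $0\to B\to X_1\to X_2\to A\to 0$ in $\MTM_{F,V}$ gives the zero class in $\Ext^2_{\MTM_F}(A,B)=0$, so there exists $X\in\MTM_F$ fitting into short exact sequences $0\to B\to X_1\to X\to 0$ and $0\to X\to X_2\to A\to 0$. Since $X$ is a subobject of $X_2\in\MTM_{F,V}$, closure under subquotients gives $X\in\MTM_{F,V}$ and the 2-extension splits there, whence $\Ext^2_{\MTM_{F,V}}=0$. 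Maximality is immediate from the length-2 description: any full abelian subcategory $\Bh\subset\MTM_F$ closed under subquotients with the prescribed $\Ext^1$'s has every length-2 subquotient of the specified form representing a class in $\Ext^1_\Bh(\Q(0),\Q(m))$, hence in the prescribed subspace, forcing $\Bh\subset\MTM_{F,V}$.

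The main obstacle is the closure under direct sums: the elementary but slightly technical check that the extension classes appearing as length-2 subquotients of $X\oplus Y$ lie in the $\Q$-linear span of those from $X$ and from $Y$. This is where the concrete structure of Tate motives enters---since all $\End(\Q(n))=\Q$, pullback of an extension class along the diagonal $\Q(0)\to \Q(0)^{\oplus 2}$ followed by pushout along the sum $\Q(m)^{\oplus 2}\to\Q(m)$ produces precisely the sum of the original two classes, and more general subquotients of $(X\oplus Y)^n$ produce $\Q$-linear combinations thereof, which remain in the prescribed subspace by its $\Q$-linearity.
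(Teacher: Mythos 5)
The paper's own proof is a one\hbox{-}liner: apply Corollary~\ref{cor:finite_ext}, which in turn rests on Theorem~\ref{thm:sat_exists} and the species/path\hbox{-}algebra machinery of Chapters 2--4. Your proposal instead tries a self-contained, hands-on verification. That is a genuinely different route, but as written it has two gaps, the second of which is fatal.

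\textbf{Closure under direct sums.} You assert that every length-$2$ subquotient of $X\oplus Y$ of the form $0\to\Q(m)\to Z\to\Q(0)\to 0$ has class in the $\Q$-span of classes coming from $X$ and from $Y$ separately, and wave at a ``standard pullback--pushout computation''. The Baer-sum construction you sketch runs in the wrong direction: it shows that the \emph{sum} of an allowed class from $X$ and one from $Y$ appears as a subquotient of $X\oplus Y$, not that an \emph{arbitrary} length-$2$ subquotient $W/W'\subset X\oplus Y$ has allowed class. Chasing through the possibilities for $W\cap X$ and $W'+ (W\cap X)$, one finds a ``mixed'' case in which $W/W'$ is neither a subquotient of $X$ nor of $Y$, and identifying its class requires genuine work. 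This is exactly what the paper's Theorem~\ref{thm:sat_exists}(\ref{it:3thm}) and Lemma~\ref{lem:choice2} (the formula $\Ext^1_{\tilde{\Bh}}(S,T)=\Ext^1_{\Bh_1}(S,T)+\Ext^1_{\Bh_2}(S,T)$) are for. The claim is true, but it is a theorem with content, not an observation.

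\textbf{Hereditarity.} The argument here is wrong, and would prove a false statement. As written, you take a $2$-extension $0\to B\to X_1\to X_2\to A\to 0$ in $\MTM_{F,V}$, observe it is zero in $\Ext^2_{\MTM_F}(A,B)=0$, and produce an $X$ with $0\to B\to X_1\to X\to 0$ and $0\to X\to X_2\to A\to 0$; but such an $X$ is forced to equal the image $K=\mathrm{im}(X_1\to X_2)$, which \emph{always} exists and says nothing about the Yoneda class. The correct reading of ``zero in $\Ext^2_{\MTM_F}$'' (e.g.\ via the connecting map $\Ext^1(A,K)\to\Ext^2(A,B)$) produces a witnessing object $W\in\MTM_F$ with $0\to X_1\to W\to A\to 0$ pushing forward to $0\to K\to X_2\to A\to 0$; this $W$ is a quotient of nothing in sight and a subobject of nothing in sight, so you cannot conclude $W\in\MTM_{F,V}$ from closure under subquotients. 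In fact the argument, as stated, would show that \emph{every} full abelian subcategory closed under subquotients of a hereditary category is hereditary. That is false: the paper's Section~\ref{ssec:ex_nh} exhibits $M\in\onemot$ (with $\onemot$ hereditary) such that $\langle M\rangle$ has $\Ext^2_{\langle M\rangle}(\Q(0),\Q(1))\neq 0$. Hereditarity of the maximal such subcategory really does require the saturation construction (or Deligne--Goncharov's Tannakian argument), where it ultimately comes from the fact that the path algebra $k\cs$ of a species is hereditary.

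The remaining points of your proposal (closure under subquotients, identification of $\Ext^1$, maximality) are fine. But the two items above are the real content of the lemma, and they cannot be finessed by elementary extension-class bookkeeping alone.
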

\begin{proof}
We apply Corollary~\ref{cor:finite_ext}. 
\end{proof}
The same result was obtained by Deligne--Goncharov with Tannakian methods, see \cite[Proposition~1.9]{deligne-goncharov}.

\begin{defn}[{\cite[Definition~1.6]{deligne-goncharov}}]For every finite set $S$ of prime ideals, we define the category
$\MTM_{\Oh_F[S^{-1}]}$ of \emph{Mixed Tate Motives over $\Oh_F[S^{-1}]$} as the category in the lemma for the choice $V=\Oh_F[S^{-1}]^\times_\Q\subset F^\times_\Q$.
\end{defn}
The group $\Oh_F[S^{-1}]^\times$ is finitely generated by Dirichlet's Unit Theorem. Hence,
 all $\Hom$- and $\Ext$-groups are finite dimensional in these categories.

The singular realisation is faithful and exact on this category. It maps $\Q(n)$ to $\Q$.

\begin{defn}Let $\MTM^{[0,n]}_{\Oh_F[S^{-1}]}$ be the full subcategory closed under extensions containing $\Q(0),\dots,\Q(n)$. 
Let
\[ B_n=B_n(\Oh_F[S^{-1}])=\End\left(H_\sing|_{\MTM^{[0,n]}_{\Oh_F[S^{-1}]}}\right)\]
be the corresponding Nori algebra.
\end{defn}
Note that $\MTM^{[0,n]}_{\Oh_F[S^{-1}]}$ is still hereditary since it is extension closed in a hereditary category.

 By Lemma~\ref{lem:mult}, each $\Q(n)$ has multiplicity $1$ and endomorphism algebra $\Q$. Recall the notion of species and its path algebra from Section~\ref{sec:species}.

\begin{prop}The category $\MTM^{[0,n]}_{\Oh_F[S^{-1}]}$ is equivalent to the category $\modules{B_n(\Oh_F[S^{-1}])}$.

The algebra $B_n(\Oh_F[S^{-1}])$ is basic and hereditary. It is the path algebra of the species $\Sh_n$ with 
\begin{itemize}
\item vertices $0,\dots,n$, 
\item division ring $D_i=\Q$ for $i=0,\dots, n$, and
\item ${}_jE_i\isom D\Ext^1_{\MTM(\Oh_F[S^{-1}])}(\Q(j),\Q(i))$. 
\end{itemize}
In other words, $B_n(\Oh_F[S^{-1}])$ is the path algebra of the quiver with vertices $0,\dots,n$ and $e_{i-j}$ many edges from $j$ to $i$ where
\[ e_s:=\dim_\Q\Ext^1_{\MTM(\Oh_F[S^{-1}])}(\Q(0),\Q(s)).\]
\end{prop}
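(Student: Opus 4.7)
The plan is to verify the hypotheses of Proposition~\ref{P:Deligne}(d) for $\MTM^{[0,n]}_{\Oh_F[S^{-1}]}$, identify it with $\modules{B_n}$ for a finite dimensional $B_n$, then read off the precise structure using Proposition~\ref{P:SpeciesWithRel} and Corollary~\ref{C:Hereditary}.

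First I would check the four conditions of Proposition~\ref{P:Deligne}(d). The simple objects are exactly $\Q(0),\dots,\Q(n)$, giving (ii). The Hom/Ext finiteness (iii) holds because $\Ext^1_{\MTM_{\Oh_F[S^{-1}]}}(\Q(j),\Q(i))\isom K_{2(i-j)-1}(\Oh_F[S^{-1}])_\Q$ is finite dimensional for $i-j\geq 2$ by Borel's theorem, and the case $i-j=1$ is controlled by the finite dimensional choice $V=\Oh_F[S^{-1}]^*_\Q$ fixed in Lemma~\ref{lem:construct_tate}. Since $\End(\Q(i))=\Q$, condition (v) is immediate. For (iv) I would use the weight filtration, which any $X\in\MTM^{[0,n]}_{\Oh_F[S^{-1}]}$ inherits from $\MTM_F$: one has $W_{-2n-2}X=0\subset W_{-2n}X\subset\cdots\subset W_0X=X$ with pure graded pieces, so $l=n+1$ works uniformly. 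Finally (i) is forced since the graded pieces are subobjects of the finite dimensional vector space $\omega(X)$ (via the faithful exact fiber functor), giving each object finite length. Hence there is a finite dimensional $\Q$-algebra $B_n$ with $\MTM^{[0,n]}_{\Oh_F[S^{-1}]}\isom\modules{B_n}$ and $B_n=\End(\omega|_{\MTM^{[0,n]}_{\Oh_F[S^{-1}]}})$.

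Next I would check that $B_n$ is basic: repeating the dimension count in Lemma~\ref{lem:mult}, the multiplicity of $\Q(i)$ in $B_n$ equals $\dim_\Q\omega(\Q(i))/\dim_\Q D_i=1/1=1$. Applying Proposition~\ref{P:SpeciesWithRel} then gives $B_n\isom k\Sh_n/\ci$ with $\ci\subset\rad(k\Sh_n)^2$ admissible and $\Sh_n=\Sh_{B_n}$ the species of $\modules{B_n}$; by Definition~\ref{defn:species_of_c} this species has $D_i=\End(\Q(i))=\Q$ and ${}_jE_i=D\Ext^1_{\MTM^{[0,n]}_{\Oh_F[S^{-1}]}}(\Q(j),\Q(i))$, matching the description in the statement. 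By weights, $\Ext^1(\Q(j),\Q(i))$ can be non-zero only when $i>j$, so $\Sh_n$ has no oriented cycles and $k\Sh_n$ is finite dimensional.

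To eliminate $\ci$, I would invoke Corollary~\ref{C:Hereditary}: it suffices to show $\Ext^2_{B_n}(K,K)=0$. Since $\MTM^{[0,n]}_{\Oh_F[S^{-1}]}$ is an abelian subcategory of the hereditary category $\MTM_{\Oh_F[S^{-1}]}$ (Lemma~\ref{lem:construct_tate}) closed under subobjects, quotients, and extensions, every Yoneda $2$-extension of simples in the subcategory can be decomposed through its middle image without leaving the subcategory, and its class must vanish because it vanishes in the ambient hereditary category. Thus $\Ext^2_{\MTM^{[0,n]}_{\Oh_F[S^{-1}]}}(\Q(j),\Q(i))=0$ for all $i,j$, which via the equivalence gives $\Ext^2_{B_n}(K,K)=0$ and hence $\ci=0$.

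It only remains to pass from the species to the quiver. Since all $D_i=\Q$, Example~\ref{ex:Q} gives $k\Sh_n=kQ$ where $Q$ has vertices $0,\dots,n$ and, from vertex $j$ to vertex $i$, exactly $\dim_\Q{}_jE_i=\dim_\Q\Ext^1(\Q(j),\Q(i))$ arrows. By the Tate-twist identification $\Ext^1(\Q(j),\Q(i))\isom\Ext^1(\Q(0),\Q(i-j))$, this number equals $d_{i-j}$; renaming $(j,i)\mapsto(i,j)$ yields $d_{j-i}$ arrows from $i$ to $j$, as claimed. The main delicate point in the plan is step concerning hereditarity of the subcategory; everything else is a direct application of the earlier structural results.
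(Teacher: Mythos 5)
Your proof is correct; two places differ from the paper. For the equivalence $\MTM^{[0,n]}_{\Oh_F[S^{-1}]}\isom\modules{B_n}$, the paper simply cites Nori's theorem (see \cite[Section~7.3]{period-buch}), while you verify Proposition~\ref{P:Deligne}(d) directly; this is more self-contained, at the cost of a few routine checks. For hereditariness the paper merely asserts ``$B_n$ is hereditary because $\MTM^{[0,n]}_{\Oh_F[S^{-1}]}$ is'', whereas you argue via Corollary~\ref{C:Hereditary} and vanishing of $\Ext^2$, which is a genuine clarification. One caution there: the phrase ``its class must vanish because it vanishes in the ambient hereditary category'' elides the only delicate point. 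For a general full abelian subcategory $\Bh\subset\Ch$ closed under subquotients the natural map $\Ext^2_\Bh\to\Ext^2_\Ch$ is \emph{not} injective --- this failure is exactly what drives the saturation construction of Chapter~\ref{sec3}. What saves your argument is that $\MTM^{[0,n]}_{\Oh_F[S^{-1}]}$ is in addition closed under extensions, forcing $\Ext^1_\Bh(A,B)=\Ext^1_\Ch(A,B)$ for all $A,B\in\Bh$; decomposing a $2$-extension $0\to T\to E_2\to E_1\to S\to 0$ through its middle image $Z$, the surjectivity of $\Ext^1_\Ch(S,E_2)\to\Ext^1_\Ch(S,Z)$ forced by $\Ext^2_\Ch(S,T)=0$ transfers verbatim to $\Bh$, so the connecting map $\Ext^1_\Bh(S,Z)\to\Ext^2_\Bh(S,T)$ is zero, which is what you need. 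Alternatively, and perhaps more in the spirit of the paper's own machinery, one may note that $\MTM^{[0,n]}_{\Oh_F[S^{-1}]}$ coincides with its own hereditary closure by the explicit criterion of Theorem~\ref{thm:sat_exists}~(\ref{it:3thm}), since it consists precisely of those objects whose composition factors lie in $\{\Q(0),\dots,\Q(n)\}$.
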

\begin{proof}The first statement is due to Nori, see also \cite[Section~7.3]{period-buch}. 

The algebra is basic because the multiplicities are $1$. It is hereditary because $\MTM^{[0,n]}_{\Oh_F[S^{-1}]}$ is. It is isomorphic to the path algebra of the species by the hereditary case of Proposition~\ref{P:SpeciesWithRel}. 

The identification of the path algebra of the species and the path algebra of a quiver with multiple edges is Example~\ref{ex:Q}.
\end{proof}
Analogously, we define $\MTM^{[a,b]}_{\Oh_F[S^{-1}]}$ for $a\leq b$ as the full abelian subcategory closed under extensions generated by $\Q(a), \dots,\Q(b)$. The category is equivalent to $\MTM^{[0,b-a]}_{\Oh_F[S^{-1}]}$ by tensoring with $\Q(-a)$. 
For $0\leq a\leq b\leq n$ it is contained in $\MTM^{[0,n]}_{\Oh_F[S^{-1}]}$. 
This induces an inclusion of their period spaces.

The following observation is our formulation of a key insight of \cite{deligne-goncharov}:
\begin{cor}\label{cor:MT_dim} 
Abbreviating $\MTM=\MTM_{\Oh_F[S^{-1}]}$ and
$B_n=B_n(\Oh_F[S^{-1}])$, we have
\begin{align}
\label{eq:pc1} \dim_{F} \Per(\MTM^{[0,n]})&\leq \dim_\Q B_n,\\
\label{eq:pc2}\dim_{F} \Per(\MTM^{[0,n]})/\Per(\MTM^{[0,n-1]})&\leq \dim_\Q B_n-\dim_\Q B_{n-1},\\
\label{eq:pc3}\dim_{F}\Per(\MTM^{[0,n]})/(\Per(\MTM^{[0,n-1]}+&\Per(\MTM^{[1,n]}))\\
&\leq \dim B_n-2\dim B_{n-1}+\dim B_{n-2}.
\end{align}
The Period Conjecture for Mixed Tate Motives over $\Oh_F[S^{-1}]$ is equivalent to equality for all $n$ in one of the formulas.
\end{cor}
\begin{proof}In the language of \cite{huber_galois}:
The space of periods $\Per(\MTM^{[0,n]})$ is a quotient of the space of formal periods $\Pertilde(\MTM^{[0,n]})$. The latter has the same dimension as $B_n$. This proves \eqref{eq:pc1}. Equality in \eqref{eq:pc1} is equivalent to the Period Conjecture by
\cite[Proposition~5.7]{huber_galois}.

The induced surjection
\[ \Pertilde(\MTM^{[0,n]})/\Pertilde(\MTM^{[0,n-1]})\twoheadrightarrow \Per(\MTM^{[0,n]})/\Per(\MTM^{[0,n-1]})\]
implies the dimension estimate in \eqref{eq:pc2}.

Finally, note that $\MTM^{[0,n-1]}\cap \MTM^{[1,n]}=\MTM^{[1,n-1]}$ and hence
\begin{multline*}
 \Pertilde(\MTM^{[0,n-1]})+\Pertilde(\MTM^{[1,n]})\\
\isom
    \Pertilde(\MTM^{[0,n-1]})\oplus\Pertilde(\MTM^{[1,n]})/\Pertilde(\MTM^{[1,n-1]}).
\end{multline*}
Together with the surjection
\begin{multline*}
 \Pertilde(\MTM^{[0,n]})/(\Pertilde(\MTM^{[0,n-1]}+\Pertilde(\MTM^{[1,n]}))\\
\twoheadrightarrow
\Per(\MTM^{[0,n]})/(\Per(\MTM^{[0,n-1]}+\Per(\MTM^{[1,n]}))
\end{multline*}
this implies the dimension estimate \eqref{eq:pc3}.

Equality for all $n$ in \eqref{eq:pc3} or \eqref{eq:pc2} implies inductively
equality for all $n$ in \eqref{eq:pc1} and hence the Period Conjecture.

\end{proof}

\begin{rem}
There is an extensive literature on periods of Mixed Tate Motives. We do not attempt to survey it, but see for example \cite{burgos-fresan} by Burgos Gil and Fres\'an or \cite[Chapter~15]{period-buch}.
 Terasoma  in \cite{terasoma} and independently Deligne and Gocharov in \cite{deligne-goncharov} showed that the number $\zeta(n_1,\dots,n_m)$ appears as the period of a Mixed Tate Motive
 in $\MTM_{\Z}^{[0,N]}$ with $N=\sum_{i=1}^m n_i$ 
and deduce a bound on
the dimension of the space of multiple zeta values. We will recover their formula in 
Section~\ref{ssec:MTMZ}.

Brown \cite{brown-annals} even proved that all periods of Mixed Tate Motives over $\Z$ are (up to multiplication by powers of $2\pi i$) spanned by these multiple zeta values. 

It is not completely obvious how our path algebras translate to Goncharov's \emph{framed Mixed Tate Motives} in \cite{goncharov:galois_sym} and Brown's \emph{motivic zeta elements} in \cite{brown-annals}. We leave this to follow-up work. 
\end{rem}

\subsection{Dimension formulas}
We now turn to the computation of $\dim_\Q B_n(\Oh_F)$ and, hence, an upper bound for the dimension of spaces of periods of Mixed Tate Motives.
We put
\[ e_m=\dim_\Q\Ext^1_{\MTM_{\Oh_F[S^{-1}]}}(\Q(0),\Q(m)).\]
The dimensions are known by Borel's computation of algebraic $K$-groups in \cite[Proposition~12.2]{borel}. We have
\begin{align}\label{E:Borel}
e_m=\begin{cases} r_1+r_2+|S|-1&m=1\\
                     r_2&m\geq 2, \text{\ even}\\
                     r_1+r_2&m\geq 3, \text{\ odd}
               \end{cases}
\end{align}
where $r_1$ and $r_2$ denote the number of real and complex places of $F$, respectively, i.e.,  for $F=\Q[X]/(f)$, 
the irreducible polynomial $f$ has degree $r_1 + 2r_2$ with $r_1$ real roots and $2r_2$ roots in $\C \setminus \mathbb{R}$.

\begin{ex}\label{ex:field}
 Let $p$ be a prime. Then $F(p)=\Q(\sqrt[4]{p})$ with minimal polynomial $X^4-p$ has two real places (corresponding to the roots $\pm \sqrt[4]{p}$) and
one complex place (corresponding to $\pm i\sqrt[4]{p}$). If $p_1,\dots,p_n$ are different primes, then the compositum $F=F(p_1)\cdots F(p_n)$ of degree $4^n$ has
$r_1=2^{n}$ real places and 
$r_2=(4^n-2^n)/2=2^{n-1}(2^n-1)$ complex places. Already this simple example shows that $r_1$ and $r_2$ can be arbitrarily big. 
\end{ex}

\begin{ex}For
$n=0$, we have $B_0(\Oh_F[S^{-1}])=\Q$.
For $n=1$, we have 
\[ B_1(\Oh_F[S^{-1}])=\Q^2\bigoplus D\Ext^1_{\MTM_{\Oh_F[S^{-1}]}}(\Q(0),\Q(1))\isom \Q^2\oplus D\Oh_F[S^{-1}]^\times_\Q\]
and the dimension is $e_1+2$.
\end{ex}

Let $p_m$  be the number of paths from $0$ to $m$ in the quiver. 
We get a recursion formula for $p_m$.
\begin{lemma}\label{lem:compute-dim}
We have $p_0=1$ and for $m\geq 1$
\begin{align*}
 p_m&= \sum_{i=0}^{m-1}p_ie_{m-i}\\
    &=e_1p_{m-1}+(r_2+1)p_{m-2}+(1-|S|)p_{m-3}
\end{align*}
where we interpret $p_{-1}=p_{-2}=0$.
\end{lemma}
\begin{proof}We decompose a path from $0$ to $m$ into a path from $0$ to some $i$ between $0$ and $m$, composed with an edge from $i$ to $m$.

For the second equation, we consider $p_{2m}-p_{2m-2}$ and use the near $2$-periodicity of the $e_i$. This gives
\begin{multline*}
 p_{m}-p_{m-2}=p_{m-3}(e_3-e_1)+p_{m-2}e_2 +p_{m-1}e_1\\
=p_{m-3}(1-|S|)+p_{m-2}r_2 +p_{m-1}e_1.
\end{multline*}
Now collect terms.
\end{proof}

\begin{prop}\label{prop:sum_paths}
\[\dim_\Q B_n(\Oh_F[S^{-1}])=\sum_{0\leq i\leq j\leq n}p_{j-i}=\sum_{m=0}^{n} p_m(n-m+1)\]

\end{prop}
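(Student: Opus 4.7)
My plan is to apply Proposition~\ref{L:dimPathAlg} in the hereditary basic case and then perform an elementary combinatorial rearrangement. The algebra $B_n$ is already described as the path algebra of a quiver $Q_n$ whose vertices are $0,1,\dots,n$ and which has $d_{j-i}$ arrows from $i$ to $j$ whenever $i<j$ (and none otherwise, since the quiver has no paths of negative weight). In particular, $Q_n$ is finite and acyclic.

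First I would recall that for a path algebra $kQ$ of a finite acyclic quiver, the underlying $k$-vector space has basis indexed by all paths in $Q$ (including the trivial paths at each vertex); equivalently, by Proposition~\ref{L:dimPathAlg} with all division rings equal to $\Q$ and all multiplicities equal to $1$, each $E(\gamma)\cong \Q$ contributes $1$ to the total dimension. Hence
\[
\dim_\Q B_n \;=\; \#\{\text{paths in } Q_n\}.
\]
The trivial paths contribute $n+1$. For paths of positive length, I would group them by their source $i$ and target $j$ with $0\le i<j\le n$, so that
\[
\dim_\Q B_n \;=\; (n+1)+\sum_{0\le i<j\le n}\#\{\text{paths } i\pfad j\text{ in }Q_n\}.
\]

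Next, the key observation is translation invariance: the multiplicities $d_{v-u}$ of arrows depend only on the difference $v-u$, so sending $v\mapsto v+i$ gives a bijection between paths $0\pfad j-i$ and paths $i\pfad j$. Therefore $\#\{\text{paths } i\pfad j\}=p_{j-i}$, which yields the first equality
\[
\dim_\Q B_n=(n+1)+\sum_{0\le i<j\le n} p_{j-i}.
\]

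Finally, for the second equality I would reindex by $m=j-i$: for each $m\in\{1,\dots,n\}$ the number of pairs $(i,j)$ with $0\le i<j\le n$ and $j-i=m$ equals the number of valid values of $i$, namely $i\in\{0,1,\dots,n-m\}$, i.e.\ $n-m+1$. Substituting gives
\[
\sum_{0\le i<j\le n} p_{j-i}=\sum_{m=1}^{n} p_m\,(n-m+1),
\]
completing the proof. No real obstacle arises; the statement is essentially a bookkeeping consequence of the basic hereditary case of Proposition~\ref{L:dimPathAlg} together with the translation symmetry of $Q_n$.
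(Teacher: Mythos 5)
Your proof is correct and follows essentially the same route as the paper: counting all paths (trivial ones giving $n+1$), invoking the translation symmetry $d_{j-i}$ to reduce $\#\{\text{paths }i\pfad j\}$ to $p_{j-i}$, and then reindexing by $m=j-i$ so that each distance $m$ occurs $n-m+1$ times. You merely spell out in more detail the appeal to Proposition~\ref{L:dimPathAlg} that the paper leaves implicit.
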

\begin{proof}
This is the dimension formula for the path algebra of a quiver with
$n+1$ vertices. Due to the symmetries of our quivers,
the number of paths from $i$ to $j$ is equal to the number of paths from $0$ to $j-i$. The distance $m$ appears $n-m+1$ times.
\end{proof}
\begin{cor}\label{cor:recursive1}
Recursively
\[\dim_\Q B_n(\Oh_F[S^{-1}])-\dim_\Q B_{n-1}(\Oh_F[S^{-1}])=\sum_{m=0}^{n}p_m.\]
\end{cor}
\begin{proof}
We take the difference:
\begin{multline*}
 \sum_{m=0}^{n}p_m(n-m+1)-\sum_{m=0}^{n-1}p_m(n-1-m+1)\\
        =p_{n}+\sum_{m=0}^{n-1}p_m(n-m +1 -n+m).
\end{multline*}
\end{proof}
\begin{cor}\label{cor:pathdim}
Let $F$ be a number field, $S$ a finite set of primes. We abbreviate
$\MTM=\MTM_{\Oh_F[S^{-1}]}$. Then
\begin{gather}
\label{eq:pc1a} \dim_{F} \Per(\MTM^{[0,n]})\leq \sum_{m=0}^{n} p_m(n-m+1),\\
\label{eq:pc2a}\dim_{F} \Per(\MTM^{[0,n]})/\Per(\MTM^{[0,n-1]})\leq \sum_{m=0}^n p_m\\
\label{eq:pc3a}\dim_{F}\Per(\MTM^{[0,n]})/(\Per(\MTM^{[0,n-1]}+\Per(\MTM^{[1,n]}))\leq p_n.
\end{gather}
Moreover,
\[ p_m= \sum_{i=0}^{m-1}p_ie_{m-i}
=p_{m-3}(1-|S|)+p_{m-2}r_2 +p_{m-1}e_1.\]

\end{cor}

\begin{proof}
We combine Corollary~\ref{cor:MT_dim} with Proposition~\ref{prop:sum_paths} for
\eqref{eq:pc1a} and with Corollary~\ref{cor:recursive1} for \eqref{eq:pc2a}.
For \eqref{eq:pc3a} we compute
\begin{multline*}
 \dim B_n-\dim B_{n-1}-\dim B_{n-1}+\dim B_{n-2}= \sum_{m=0}^np_m-\sum_{m=0}^{n-1}p_m=p_n
\end{multline*}
by Corollary~\ref{cor:recursive1}

The recursion formula is Lemma~\ref{lem:compute-dim}.
\end{proof}


\subsection{Mixed Tate motives over \texorpdfstring{$\Z$}{Z}}\label{ssec:MTMZ}

In this section, we spell out our results in the case $\MTM=\MTM_\Z$.
We have $r_1=1$, $r_2=0$, $|S|=0$ and hence \eqref{E:Borel} implies
\[ 0=e_1=e_2=e_4=\dots,\quad 1=e_3=e_5=\dots\]
Lemma \ref{lem:compute-dim} yields:
\begin{lemma}\label{lem:zagier}
In the case of Mixed Tate Motives over $\Z$, the dimension $p_m$ of the number of paths from $0$ to $m$ satisfies the recursion relation
\begin{equation}\label{zagier} p_{m+2}=p_m+p_{m-1}\end{equation}
with starting values $p_0=1$, $p_1=p_2=0$. 
\end{lemma}
The full sequence reads
\[ 1,0,0,1,0,1,1,1,2,2,3,4,5,7,9,\dots\]
From this we get the dimension of $B_n(\Z)$ and an upper bound for the space of periods $\dim_\Q\Per(\MTM_\Z^{[0,n]})$. 

This is very similar to the expected recursion relation for the dimension of spaces of multiple zeta values of weight $n$, but with different initial values. We thank Javier Fres\'an for pointing this out. We explain the connection.

\begin{defn}[{Zagier, \cite[p. 509]{zagier}}]\label{defn:recursion}
Let $d_n$ for $n\geq 0$ be given by $d_0=1$, $d_1=0$, $d_2=1$ and
\[ d_n=d_{n-2}+d_{n-3}.\]
\end{defn}
The sequence reads
\[1, 0, 1, 1, 1, 2, 2, 3, 4, 5, 7, 9, 12, 16,\dots\]

Recall that $B_n:=B_n(\Z)$ is the path algebra of the quiver $Q_n$ attached to $\MTM_\Z^{[0,n]}$. Let $P_i^{(n)}$ the the projective cover  of $\Q(i)$ in $\MTM^{[0,n]}$.
It corresponds to the $B_n$-module
$B_n\epsilon_i$ where $\epsilon_i$ is the idempotent for the vertex $i$ of the quiver. 
We have 
\[ B_n=\bigoplus_{i=0}^n P_i^{(n)}.\]
\begin{defn}We put
\[ B_n^\even=\bigoplus_{i=0}^{\lceil n/2\rceil} P_{2i}^{(n)},\quad B_n^\odd=\bigoplus_{i=0}^{\lceil n/2\rceil-1} P_{2i+1}^{(n)}.\]
\end{defn}
Note that
\begin{equation}\label{eq:sum} B_n=B_n^\even\oplus B_n^\odd\end{equation}
and
\begin{equation}\label{eq:odd} B_n^\odd\isom B_{n-1}^\even\end{equation}
as vector spaces.

\begin{prop}\label{prop:zagier_fits}
We have 
\[ \dim B_n^\even-\dim B_{n-1}^\even=d_n=p_{n+3}.\]
\end{prop}
\begin{proof}
Recall from the decomposition of the path algebra that $P_i^{(n)}$ has as basis the paths starting from $i$ in the quiver $Q_n$. Hence
\[ \dim P_i^{(n)}=\dim P_{0}^{(n-i)}=\sum_{j=0}^{n-i}p_j.\]
This implies
\begin{align*} \dim B_n^\even&=\sum_{i=0}^{\lceil n/2\rceil}\dim P_{2i}^{(n)}\\
            &=\sum_{i=0}^{\lceil n/2\rceil}\dim P_{0}^{(n-2i)}\\
            &=\dim P_0^{(n)}+\dim B_{n-2}^\even
\end{align*}
and hence
\[ \delta_n:=\dim B_n^\even-\dim B^{n-1}_\even=\dim P_0^{(n)}-\dim P_0^{(n-1)}+\delta_{n-2}=p_n+\delta_{n-2}\]
because the new paths in $P_0^{(n)}$ are the ones of length $n$.
In low degrees, we compute explicitly:
\begin{align*}
\delta_0&=\dim B_0=1=d_0=p_3,\\
\delta_1&=\dim P_0^{(1)}=1=d_1=p_4,\\
\delta_2&= p_2+\delta_0=0+1=1=d_2=p_5.
\end{align*}
For $n\geq 3$ we argue by induction:
\[ \delta_n=p_n+\delta_{n-2}=d_{n-3}+d_{n-2}=d_n\]
and on the other hand
\[ \delta_n=p_n+\delta_{n-2} =p_n+ p_{n+1}=p_{n+3}.\]
\end{proof}

\begin{rem}\label{rem:discussion_MZV}
We relate this to multiple zeta values. They are the periods of $\MTM$ which are real numbers.
Let $\MZV^{[0,n]}$ be the space of multiple zeta values of weight $0$ through $n$. Recall that $\zeta(2m)$ agrees with the period $(2\pi i)^{2m}$ of $\Q(-2m)$ up to a rational number, so it is a multiple zeta value of weight $2m$.
By the work of Brown, see \cite{brown-annals}, we have
\[ \Per(\MTM^{[0,n]})= \MZV^{[0,n]}\oplus 2\pi i\MZV^{[0,n-1]}.\]
Under the isomorphism
\[ B_n\tensor\C\isom \Pertilde(\MTM^{[0,n]})\tensor\C\]
the even part $B_n^\even$ corresponds precisely to the real periods and the odd part $B_n^\odd$  to the imaginary ones. We
obtain the same upper bound
\[ d_n\geq \dim\MZV^{[0,n]}\]
as predicted by Zagier and established  in the work  of Terasoma, Deligne--Goncharov.
\end{rem}

\begin{rem}
Note that $B_n^\even$ is a $B_n$-module and defines a Mixed Tate Motive. Its periods do \emph{not} agree with $\MZV^{[0,n]}$. In fact, at least conjecturally, $\MZV^{[0,n]}$ cannot be realised as a space of periods of a collection of 
motives. E.g., whenever the 
period $\zeta(3)$ appears, we expect to have $(2\pi i)^3$ as well.
\end{rem}

\begin{ex}
The quiver of $B_9$ has the following shape:

\begin{equation*}\begin{tikzpicture}[description/.style={fill=white,inner sep=2pt}]
   \matrix (m) [matrix of math nodes, row sep=2em,
                 column sep=2em, text height=1.5ex, text depth=0.25ex,
                 inner sep=2pt, nodes={inner xsep=0.3333em, inner
ysep=0.3333em}] at (0, 0)
    {  0 & 1 & 2 & 3 & 4 & 5 & 6 & 7 & 8 & 9 \\ 
    };
 \draw[ ->] ($(m-1-1.north) + (0.5mm,0mm)$) .. controls +(8mm,12mm) and
+(-8mm,12mm) .. ($(m-1-4.north) + (-0.5mm,0mm)$);

 \draw[ ->] ($(m-1-2.north) + (0.5mm,0mm)$) .. controls +(8mm,12mm) and
+(-8mm,12mm) .. ($(m-1-5.north) + (-0.5mm,0mm)$);

  \draw[ ->] ($(m-1-3.north) + (0.5mm,0mm)$) .. controls +(8mm,12mm) and
+(-8mm,12mm) .. ($(m-1-6.north) + (-0.5mm,0mm)$);

  \draw[ ->] ($(m-1-4.north) + (0.5mm,0mm)$) .. controls +(8mm,12mm) and
+(-8mm,12mm) .. ($(m-1-7.north) + (-0.5mm,0mm)$);

 \draw[ ->] ($(m-1-5.north) + (0.5mm,0mm)$) .. controls +(8mm,12mm) and
+(-8mm,12mm) .. ($(m-1-8.north) + (-0.5mm,0mm)$);

  \draw[ ->] ($(m-1-6.north) + (0.5mm,0mm)$) .. controls +(8mm,12mm) and
+(-8mm,12mm) .. ($(m-1-9.north) + (-0.5mm,0mm)$);

 \draw[ ->] ($(m-1-7.north) + (0.5mm,0mm)$) .. controls +(8mm,12mm) and
+(-8mm,12mm) .. ($(m-1-10.north) + (-0.5mm,0mm)$);

 \draw[ ->] ($(m-1-1.south) + (0.5mm,0mm)$) .. controls +(15mm,-27mm) and
+(-15mm,-27mm) .. ($(m-1-6.south) + (-0.5mm,0mm)$);

 \draw[ ->] ($(m-1-2.south) + (0.5mm,0mm)$) .. controls +(15mm,-27mm) and
+(-15mm,-27mm) .. ($(m-1-7.south) + (-0.5mm,0mm)$);

  \draw[ ->] ($(m-1-3.south) + (0.5mm,0mm)$) .. controls +(15mm,-27mm) and
+(-15mm,-27mm) .. ($(m-1-8.south) + (-0.5mm,0mm)$);

  \draw[ ->] ($(m-1-4.south) + (0.5mm,0mm)$) .. controls +(15mm,-27mm) and
+(-15mm,-27mm) .. ($(m-1-9.south) + (-0.5mm,0mm)$);

 \draw[ ->] ($(m-1-5.south) + (0.5mm,0mm)$) .. controls +(15mm,-27mm) and
+(-15mm,-27mm) .. ($(m-1-10.south) + (-0.5mm,0mm)$);

 \draw[ ->] ($(m-1-1.south) + (-0.5mm,0mm)$) .. controls +(30mm,-50mm) and 
+(-30mm,-50mm) .. ($(m-1-8.south) + (0.5mm,0mm)$);

 \draw[ ->] ($(m-1-2.south) + (-0.5mm,0mm)$) .. controls +(30mm,-50mm) and 
+(-30mm,-50mm) .. ($(m-1-9.south) + (0.5mm,0mm)$);

  \draw[ ->] ($(m-1-3.south) + (-0.5mm,0mm)$) .. controls +(30mm,-50mm) and 
+(-30mm,-50mm) .. ($(m-1-10.south) + (0.5mm,0mm)$);

 \draw[ ->] ($(m-1-1.north) + (-0.5mm,0mm)$) .. controls +(3mm,25mm) and 
+(-3mm,25mm) .. ($(m-1-10.north) + (0.5mm,0mm)$);

 \end{tikzpicture}\end{equation*}

By Proposition~\ref{prop:sum_paths}, the total dimension is 
\begin{multline*}
\dim_\Q B_9=10+9p_1+8p_2+7p_3+6p_4+5p_5+4p_6+3p_7+2p_8+p_9\\
=         10+0+0+7+0+5+4+3+4+2=35.
\end{multline*}
On the other hand, by Proposition~\ref{prop:zagier_fits} 
\[ \dim B_9^\even= \sum_{i=0}^9d_i=1+0+1+1+1+2+2+3+4+5=20\]
is the expected dimension for the space of $\MZV^{[0,9]}$.
Moreover, by \eqref{eq:odd}
\[ \dim B_9^\odd=\dim B_8^\even=15\]
is the expected dimension for the space $2\pi i\MZV^{[0,8]}\subset \Per(\MTM^{[0,9]})$. Finally
\[ \dim B_9^\even+\dim B_9^\odd=20+15=35,\]
confirming \eqref{eq:sum}. 
\end{ex}

\begin{ex} \label{Ex:NonHereditaryMTMsubcat}
Let $\Ah$ be the full subcategory closed under extensions in $\MTM_\Z$ generated by $\Q(0),\Q(3),\Q(6)$. Note that 
\[ \Ext^1(\Q(0),\Q(3))=\Ext^1(\Q(3),\Q(6))=\Q,\quad \Ext^1(\Q(0),\Q(6))=0.\]
The category is described by the quiver $B_2$ with three vertices $1,3,6$ and two edges $a$ from $1$ to $3$ and $b$ from $3$ to $6$.
Let $B$ be the quiver algebra. The category contains an object $M$ (motive) corresponding to $B'=B/\langle ab\rangle$. The category $\langle M\rangle$
is equivalent to the category of $B'$-modules, in particular not hereditary.
By Corollary~\ref{cor:MT_dim}  the space of periods of $M$ is at most $5$-dimensional over $\Qbar$. The Period Conjecture predicts that the dimension is equal to $5$.
\end{ex}

\subsection{Realisation of quiver algebras as motives}
Even the seemingly easy Mixed Tate Motives contain very complicated period spaces.

\begin{lemma}
Let $B=\Q Q/I$, where $Q$ is a quiver without oriented cycles and $I \subset \Q Q$ is a two-sided admissible ideal. 
Then there is a number field $F$ and a Mixed Tate Motive $M$ over $\Oh_F$ such that $\langle M\rangle $ is equivalent to the category of $B$-modules.
\end{lemma}
\begin{proof} Let $m$ be the maximal number of arrows between any two vertices in $Q$.

We choose a number field $F$ such that for all $i<j$
\[ \dim_\Q \Ext^1_{\MTM_{\Oh_F}}(\Q(i),\Q(j))\geq m\]
We can use $F=\Q(\mu_N)$ with $r_1=0$, $r_2=\phi(N)/2$ with $N$ big enough so that
$r_2> m$; or $F$ as in Example~\ref{ex:field} with $2^{n-1}(2^n-1)> m$. 

As $Q$ has no oriented cycles, we have a partial order on the set of vertices. We choose a total order compatible with the partial order, i.e., a numbering of the vertices such that $i<j$ whenever the vertices are comparable in the quiver. We identify $i$ with the Tate motive $\Q(i)$ and choose representatives
for the edges in the Ext-groups. This gives a description
of $B$ as quotient of the path algebra for a category of Mixed Tate Motives over $F$ (in general, by a non-admissible ideal -- we typically need to delete some arrows). Similar to Example \ref{Ex:NonHereditaryMTMsubcat}, the free module ${}_{B}B$ corresponds to an object $M \in \MTM_{\Oh_F}$.
\end{proof}

\begin{rem}\label{R:MixedTateArbitrary}
This means that arbitrarily complicated dimension formulas occur already for Mixed Tate Motives. In particular, the counterexamples in Example~\ref{Ex:dimEstimateLongerPaths} can be realised as Mixed Tate Motives.
\end{rem}

\begin{appendix}
\section{Proof of Proposition~\ref{prop:choice}}\label{app}

In this appendix, we give a proof of Proposition~\ref{prop:choice}.

Throughout the chapter, we fix a perfect field $k$ and a category $\Ch$ as in Set-Up~\ref{setup:artin}. We consider the system of all strongly finitary full abelian subcategories $\Bh$ closed under subquotients. 

By Proposition~\ref{P:Deligne} and Morita theory (Lemma~\ref{lem:Morita}) such a $\Bh$ is equivalent to the category of finitely generated modules over a basic finite dimensional algebra $B$. The assumption on the partial order of the simple objects of $\Ch$ implies that the species of $\Bh$ is contains no oriented cycles. We first show that this has strong consequences for projective covers of simple objects in $\Bh$.

\begin{lemma}\label{lem:hull_unique}
Let $B$ be a basic finite dimensional algebra whose species contains no oriented cycles. 
Then the projective cover $P(S)$ of a simple object $S$ is unique up to unique isomorphism in $\modules{B}$.
\end{lemma}
\begin{proof} Projective covers exist and are unique up to non-unique isomorphism in general. Since the species contains no oriented cycles, there is an isomorphism
\begin{align}\label{E:IsoProjSimp}
 \End_B(P(S)) \cong \End_B(S),
 \end{align}
so that the identity of $S$ has a unique lift to $P(S)$. This implies uniqueness of the isomorphism.

The argument for \eqref{E:IsoProjSimp} is analogous to the case of path algebras of quivers.
\end{proof}

\begin{lemma}\label{lem:canonical}
The equivalences  $\Bh\isom \modules{B}$ can be chosen in a compatible way, i.e., such that the inclusions $\Bh\subset\Bh'$ are induced by functorial epimorphisms $B'\to B$.
\end{lemma}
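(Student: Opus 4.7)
The plan is to build the algebras canonically via projective covers, exploiting the uniqueness afforded by Proposition~\ref{prop:hull_unique}, which applies since Set-Up~\ref{setup:artin}(3) forces the species of every $\Bh$ in our system to be cycle-free. For each $\Bh$ in the system, let $S_1,\dots,S_n$ be representatives of its finitely many simple objects and let $P_i^\Bh$ denote the projective cover of $S_i$ in $\Bh$. Set $P^\Bh = \bigoplus_i P_i^\Bh$ and $B(\Bh) = \End_\Bh(P^\Bh)\op$. By Proposition~\ref{prop:hull_unique}, each $P_i^\Bh$ is unique up to a unique isomorphism inducing the identity on $S_i$, so $P^\Bh$ and $B(\Bh)$ are canonical. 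The equivalence is the Morita functor $\Hom_\Bh(P^\Bh,-)\colon \Bh \xrightarrow{\sim} \modules{B(\Bh)}$.

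For an inclusion $\Bh \subset \Bh'$, I would first produce a canonical epimorphism $\pi\colon P^{\Bh'} \twoheadrightarrow P^\Bh$ componentwise. For each simple $S_i$ of $\Bh$ (which is also simple in $\Bh'$), let $Q_i$ be the largest quotient of $P_i^{\Bh'}$ lying in $\Bh$; this exists because $\Bh$ is closed under finite subdirect products ($P/K_1, P/K_2 \in \Bh$ implies $P/(K_1\cap K_2) \hookrightarrow P/K_1 \oplus P/K_2 \in \Bh$) and $P_i^{\Bh'}$ is artinian. Projectivity of $P_i^{\Bh'}$ in $\Bh'$ together with closure of $\Bh$ under subobjects shows $Q_i$ is projective in $\Bh$; since every semisimple quotient of $P_i^{\Bh'}$ is a quotient of its top $S_i$, the top of $Q_i$ in $\Bh$ equals $S_i$, so $Q_i$ is indecomposable and hence canonically isomorphic to $P_i^\Bh$. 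The resulting maps $\pi_i\colon P_i^{\Bh'} \twoheadrightarrow P_i^\Bh$ (with $\pi_j = 0$ for simples $S_j$ of $\Bh'$ not in $\Bh$) assemble into $\pi$. Given $f \in \End_{\Bh'}(P^{\Bh'})$, the composition $\pi \circ f$ has image in $P^\Bh \in \Bh$, so by maximality of the $\Bh$-quotient it kills $\ker \pi$ and descends uniquely to $\phi(f)\colon P^\Bh \to P^\Bh$ with $\phi(f)\circ \pi = \pi\circ f$. Uniqueness of the factorisation makes $\phi$ a ring homomorphism, and surjectivity follows by lifting an arbitrary $g\colon P^\Bh \to P^\Bh$ across $\pi$ via projectivity of $P^{\Bh'}$. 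Passing to opposites yields the promised surjection $B(\Bh') \twoheadrightarrow B(\Bh)$.

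Strict functoriality for a chain $\Bh \subset \Bh' \subset \Bh''$ reduces to comparing the composite $P^{\Bh''} \to P^{\Bh'} \to P^\Bh$ with the directly constructed $P^{\Bh''} \to P^\Bh$: both are surjections onto $P^\Bh$ inducing the identity on each top $S_i$, so they coincide by Proposition~\ref{prop:hull_unique}. Compatibility of the equivalences is then a direct verification: for $X \in \Bh \subset \Bh'$, the map $\pi$ induces a natural isomorphism $\Hom_{\Bh'}(P^{\Bh'},X) \cong \Hom_\Bh(P^\Bh,X)$ that identifies the $B(\Bh')$-action on the left with the restriction along $B(\Bh') \twoheadrightarrow B(\Bh)$ of the $B(\Bh)$-action on the right. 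The main subtlety is arranging canonicity of the projective covers sharply enough to upgrade ``unique up to isomorphism'' into strict functoriality; this is exactly what Proposition~\ref{prop:hull_unique} delivers, and it is the absence of oriented cycles in the species that makes this possible.
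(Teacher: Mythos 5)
Your proof is correct and takes essentially the same approach as the paper: build $B(\Bh)$ from a fixed direct sum of projective covers of the simple objects, obtain the comparison map by showing that the maximal $\Bh$-quotient of $P_i^{\Bh'}$ exists (via the closure of $\Bh$ under subobjects / subdirect products and the artinian condition) and is a projective cover of $S_i$ in $\Bh$, then invoke Proposition~\ref{prop:hull_unique} for canonicity and projectivity for surjectivity. Your explicit verification of strict functoriality along a chain $\Bh\subset\Bh'\subset\Bh''$ is a small but welcome addition that the paper leaves implicit.
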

\begin{proof}
For every isomorphism class of simple objects in $\Ch$ we fix a representative. 
Note that $\Bh\subset\Ch$ is closed under isomorphisms. If it contains a simple object isomorphic to such a representative, it also contains the representative itself.

For a simple object $S$ in $\Bh$, let $P_\Bh(S)$ be its projective cover in $\Bh$. We set
\[ B:=\End_\Ch\left( \bigoplus_S P_\Bh(S)\right)\]
where the sum is taken over the fixed simple objects of $\Bh\subset\Ch$.

Given an inclusion $\Bh\subset\Bh'$, let $\overline{P_{\Bh'}(S)}$ be the maximal quotient of $P_{\Bh'}(S)$ which is contained in $\Bh$. We want to show that it exists. We will find $\overline{P_{\Bh'}(S)}$ (more precisely, $P_{\Bh'}(S)\twoheadrightarrow \overline{P_{\Bh'}(S)}$) as the projective limit of the system of all quotients $P_{\Bh'}(S)\twoheadrightarrow Q$ with $Q$ in $\Bh$. This system is filtered: any two such quotients $Q_1$ and $Q_2$ are dominated by the image of
$P_{\Bh'}(S)$ in $Q_1\times Q_2$.
It is in $\Bh$ because $\Bh$ is closed under subobjects. The projective limit exists because $P_{\Bh'}(S)$ has finite length.

Let $P_{\Bh'}(S)\to M$ be a morphism towards an object in $\Bh$. We claim that it factors canonically via $\overline{P_{\Bh'}(S)}$. As $\Bh$ is closed under subobjects, the image $Q\subset M$ is in
$\Bh$ again. By definition of the projective limit, the map
$P_{\Bh'}(S)\twoheadrightarrow Q$ factors canonically via $P_{\Bh'}(S)\to \overline{P_{\Bh'}(S)}$.

Hence the quotient $\overline{P_{\Bh'}(S)}$ is projective in
$\Bh$. As a quotient of a local module it is indecomposable, hence a projective cover of $S$ in $\Bh$. 
By Lemma~\ref{lem:hull_unique} it is canonically isomorphic to $P_\Bh(S)$. 
Every endomorphism of $\bigoplus_SP_{\Bh'}(S)$ induces a unique endomorphism
of $\bigoplus_S\overline{P_{\Bh'}(S)}$. This defines
\[ \phi_{B'B}:B'\twoheadrightarrow B,\]
which is surjective because $\bigoplus_S P_{\Bh'}(S)$ is projective in $\Bh'$.
\end{proof}

\begin{rem}The ring $\tilde{B}:=\varprojlim_{\Bh}B$ is pseudo-compact in the sense of \cite{gabriel}. We find the category $\Ch$ as its discrete representations. We do not use this fact.
\end{rem}

By Proposition~\ref{P:SpeciesWithRel} (i.e., \cite[Theorem~3.12]{Berg}) there is an isomorphism 
\[B\isom k\cs_B/\Ih\] for some two-sided ideal $\Ih$  where $\cs_B$ is the species of
$\modules{B}$. 
Recall from Definition~\ref{defn:species_of_c} and Definition~\ref{defn:q_from_cs} that  
\begin{equation}\label{eq:praesentation}  k\Sh_B=T_K(D\Ext^1_\Bh(K,K))\end{equation}
 is the tensor algebra over $K=B/\rad(B)$ and note that $\Ext^1_\Bh(X,Y)\subset \Ext^1_\Ch(X,Y)$ for all $X,Y\in\Bh$. 
The species $\cs_B$ of $\Bh$ is
directed by our condition on $\Ch$, hence $k\cs_B$ is finite dimensional. 
By
Theorem~\ref{thm:generate_ideal} the ideal $\Ih$ is generated by $D\Ext^2_\Bh(K,K)$.
This establishes the characterisation of $\Bh$ as claimed in Proposition~\ref{prop:choice}.


We will first go through 
the proof of Proposition~\ref{P:SpeciesWithRel} in order to
collect all instances where choices are being made. We will then show how to make these choices in a compatible  (not canonical) way using Zorn's Lemma.

\emph{1. Step:}
By the Theorem of of Wedderburn and Malcev, see \cite[11.6~Theorem]{pierce}, the projection $B\to K$ has a section 
\begin{equation}\label{eq:sp1} K\to B\end{equation}
by an algebra homomorphism. It is unique up to conjugation by $(1-w)$ for
 $w\in\rr_B=\rad(B)$. The choice of \eqref{eq:sp1} turns $B$ into a 
$K-K$-bimodule. 

\emph{2. Step:}
By Lemma~\ref{lem:compute_ext} there is a canonical isomorphism of
$K-K$-bimodules
\begin{equation}\label{canApp} D\Ext^1_B(K,K)\isom \rr_B/\rr_B^2.\end{equation}
We choose a  splitting $\rr_B/\rr_B^2\to \rr_B$ as $K-K$-bimodules. It exists because $K\tensor_k K\op$ is semi-simple because $k$ is perfect. Together with \eqref{canApp} this yields
\begin{equation}\label{eq:sp2} D\Ext^1_B(K,K)\to\rr_B.\end{equation}

\emph{3. Step:}
By construction \eqref{eq:praesentation}
the maps \eqref{eq:sp1} and \eqref{eq:sp2} induce an algebra epimorphism
\[ \pi: k\cs_B\to B.\]
It induces an isomorphism $\rr_{k\cs_B}/\rr_{k\cs_B}^2\isom \rr_B/\rr_B^2$, hence the kernel $\ci_B$ is contained in $\rr_{k\cs_B}^2$. As $\pi$ is a morphism of $K-K$-bimodules, $\ci_B$ has a natural structure of $K-K$-bimodule as well.

\emph{4. Step:}
By Theorem~\ref{thm:generate_ideal}, we have a canonical isomorphism
\[ D\Ext^2_{B}(K,K)\isom \Ih_B/(\Ih_B\rr_{k\cs_B}+ \rr_{k\cs_B}\Ih_B).\]
As in the 2. step, we choose a splitting  of
\begin{equation}\label{eq:step4}  \Ih_B\to \Ih_B/(\Ih_B\rr_{k\cs_B}+ \rr_{k\cs_B}\Ih_B)\end{equation}
in the categeory of $K-K$-bimodules. This yields
\begin{equation}\label{eq:sp3} D\Ext^2_B(K,K)\hookrightarrow \Ih_B.\end{equation}

\begin{lemma}\label{lem:choice1}
Let $\Ch$ be as in Set-Up~\ref{setup:artin} and let $\Bh_1\subset\Bh_2\subset\Ch$ be strongly finitary full subcategories closed under subquotients. Given $\Ih_1\subset k\cs_1$ and an inclusion of $D\Ext^2_{\Bh_1}(K_1,K_1)\subset \Ih_1$ splitting the canonical \eqref{eq:step4}, we can choose $\Ih_2\subset k\cs_2$ and an inclusion of $D\Ext^2_{\Bh_2}(K_2,K_2)\subset\Ih_2$ splitting the canonical \eqref{eq:step4} such that the two obvious diagrams commute.
\end{lemma}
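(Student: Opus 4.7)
The plan is to carry out three successive compatible lifting constructions, one for each genuine choice in the construction of $B \cong k\cs_B/\Ih_B$ preceding the lemma (Steps 1, 2, and 4; Step 3 is automatic from the universal property of the tensor algebra). The canonical epimorphism $\phi \colon B_2 \twoheadrightarrow B_1$ from Lemma \ref{lem:canonical} provides the backbone of all three lifts; note it restricts to a surjection $\rad(B_2) \twoheadrightarrow \rad(B_1)$ (since $B_1/\phi(\rad(B_2))$ is a quotient of the semisimple $K_2$), and hence also to a surjection $\rad(B_2)^2 \twoheadrightarrow \rad(B_1)^2$.

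For Step 1, I would construct a Wedderburn--Malcev section $s_2 \colon K_2 \to B_2$ with $\phi \circ s_2 = s_1 \circ \bar\phi$. Pick any section $\tilde s_2$; the two algebra maps $\phi \circ \tilde s_2$ and $s_1 \circ \bar\phi$ from $K_2$ to $B_1$ both lift $\bar\phi$ modulo $\rad(B_1)$, so by Wedderburn--Malcev uniqueness they differ by conjugation with $1 + w$ for some $w \in \rad(B_1)$. Lifting $w$ to $\tilde w \in \rad(B_2)$ and conjugating $\tilde s_2$ by $1 + \tilde w$ yields the desired $s_2$. For Step 2, I would choose a $K_2$--$K_2$-bimodule splitting $\sigma_2 \colon \rad(B_2)/\rad(B_2)^2 \to \rad(B_2)$ compatible with $\sigma_1$. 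Since $k$ is perfect, $K_2 \otimes_k K_2\op$ is semisimple, so every $K_2$--$K_2$-bimodule is projective and every short exact sequence of such splits. Pick any bimodule splitting $\tau_2$; the discrepancy $\alpha := \phi \circ \tau_2 - \sigma_1 \circ \bar p$, where $\bar p$ is the induced surjection of radical quotients, is a $K_2$--$K_2$-bimodule map into $\rad(B_1)$ whose composition with the quotient $\rad(B_1) \twoheadrightarrow \rad(B_1)/\rad(B_1)^2$ vanishes, so it factors through $\rad(B_1)^2$. Using projectivity of the source and the surjectivity of $\phi \colon \rad(B_2)^2 \twoheadrightarrow \rad(B_1)^2$, lift $\alpha$ to $\beta \colon \rad(B_2)/\rad(B_2)^2 \to \rad(B_2)^2$ and set $\sigma_2 := \tau_2 - \beta$.

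Combining $s_2$ and $\sigma_2$ via the universal property of the tensor algebra produces a surjection $\pi_2 \colon k\cs_2 \to B_2$ making the square with $\pi_1$, $\phi$, and the canonical map $k\cs_2 \to k\cs_1$ commute. In particular $\Ih_2 := \ker \pi_2$ maps into $\Ih_1$, delivering the first of the two obvious diagrams. For the second diagram I need a $K_2$--$K_2$-bimodule section of the canonical quotient $\Ih_2 \twoheadrightarrow \Ih_2/(\Ih_2 \rr_{k\cs_2} + \rr_{k\cs_2}\Ih_2) \cong D\Ext^2_{\Bh_2}(K_2,K_2)$ whose composition with $\Ih_2 \to \Ih_1$ matches the given splitting for $\Bh_1$ composed with the natural map $D\Ext^2_{\Bh_2}(K_2,K_2) \to D\Ext^2_{\Bh_1}(K_1,K_1)$. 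This is formally identical to Step 2: pick any bimodule splitting, observe that the compatibility defect lands in $\Ih_1 \rr_{k\cs_1} + \rr_{k\cs_1}\Ih_1$, and correct it by lifting along the surjection $\Ih_2\rr_{k\cs_2} + \rr_{k\cs_2}\Ih_2 \twoheadrightarrow \Ih_1\rr_{k\cs_1} + \rr_{k\cs_1}\Ih_1$ using projectivity of the source.

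The hard part is not the three lifts themselves, which are routine splitting computations in the semisimple category of $K_i$--$K_i$-bimodules, but rather the naturality statement underlying the second diagram: one must verify that the map $D\Ext^2_{\Bh_2}(K_2,K_2) \to D\Ext^2_{\Bh_1}(K_1,K_1)$ coming from the Bongartz-type isomorphism of Theorem \ref{T:Bongartz} applied to the pair $(\Ih_2, \Ih_1)$ coincides with the map induced by functoriality of $\Ext^2$ along $\Bh_1 \hookrightarrow \Bh_2$ (equivalently, by $\phi$). Once this coherence between the two descriptions of the vertical map is confirmed, the three lifting steps assemble to yield the lemma.
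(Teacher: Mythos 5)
Your Steps 1 through 3 reproduce the paper's argument almost verbatim, and your correction device in Step 2 --- lift the discrepancy into $\rad(B_2)^2$ using projectivity over the semisimple $K_2\otimes_k K_2\op$ and the surjectivity $\phi\colon\rad(B_2)^2\twoheadrightarrow\rad(B_1)^2$ --- is exactly what the paper's one-line ``by semi-simplicity'' is standing in for. The Wedderburn--Malcev step you phrase slightly differently than the paper (which detours through a section of $K_2\to K_1$), but the content is the same.

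Step 4 is not ``formally identical to Step 2,'' and this is where the gap sits. The surjectivity you invoke, namely
\[
\Ih_2\rr_{k\cs_2} + \rr_{k\cs_2}\Ih_2 \twoheadrightarrow \Ih_1\rr_{k\cs_1} + \rr_{k\cs_1}\Ih_1,
\]
is false in general, because $\Ih_2 \to \Ih_1$ need not be surjective --- unlike $\rad(B_2)\to\rad(B_1)$, the relation ideal for the larger category is generically \emph{smaller}. The extreme case $\Bh_2 = \Bh_1^{\sat}$, where $\Ih_2 = 0$ but $\Ih_1 \neq 0$, already breaks the surjectivity, though there the discrepancy vanishes trivially since $D\Ext^2_{\Bh_2}(K_2,K_2)=0$. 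But it can also break in a way your correction cannot repair. Take $Q\colon 1\xrightarrow{a}2\xrightarrow{b}3\xrightarrow{c}4$, $\Ch = \modules{kQ}$, $\Bh_1 = \modules{kQ/(ba,cb)}$, $\Bh_2 = \modules{kQ/(cba)}$. Both have species $Q$, the map $k\cs_2\to k\cs_1$ is the identity on $kQ$, and $\Ih_2 = (cba)=k\{cba\}\subset\Ih_1 = (ba,cb)=k\{ba,cb,cba\}$. One checks $\Ih_2\rr_2+\rr_2\Ih_2 = 0$ while $\Ih_1\rr_1+\rr_1\Ih_1 = k\{cba\}\neq 0$, so the claimed surjection fails with a nonzero target. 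Moreover the bimodule weight decompositions here force $\iota_2(cba)=cba$ and $\iota_1=\text{incl}\colon k\{ba,cb\}\hookrightarrow\Ih_1$, the Yoneda map $\Ext^2_{\Bh_1}\to\Ext^2_{\Bh_2}$ is zero (the class of $ba$ trivializes in $\Bh_2$), so the discrepancy sends $cba\mapsto cba - 0 = cba$, which does not lie in $\psi(\Ih_2\rr_2+\rr_2\Ih_2)=0$. The lift $\beta$ you want does not exist. So your Step 4 as written cannot be completed; a correct argument must explain why the discrepancy always lands in the (possibly proper) image, and the example above shows this is not automatic. Your closing remark flagging the naturality of the Bongartz isomorphism is a legitimate concern and is also left implicit in the paper, but the surjectivity failure is the more serious obstacle to the argument as you have laid it out.
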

\begin{proof}
Let $B_1$ and $B_2$ be the basic algebras of Lemma~\ref{lem:canonical}. As they are canonical, the inclusion of categories is induced by a surjection $B_2\twoheadrightarrow B_1$. Since $B_2$ is finite dimensional, it induces a surjection $\rr_{B_2}\twoheadrightarrow \rr_{B_1}$. By assumption, $\Bh_1\isom \modules{k\cs_1/\Ih_1}$, hence $B_1\isom k\cs_1/\Ih_1$. In particular, we are given an epimorphism
\[ k\cs_1\to B_1\]
defining $K_1\to B_1$ and $D\Ext^1_{\Bh_1}(K_1,K_1)\isom \rr_{B_1}/\rr_{B_1}^2$.

\emph{1. step (choice of $K_2\to B_2$):} We are given a commutative diagram
\[\begin{xy}\xymatrix{
B_2\ar[r]\ar[d]&K_2\ar[d]\\
B_1\ar[r]&K_1
}\end{xy}\]
and a splitting $\rho_1:K_1\to B_1$ as $k$-algebras. 
We recurr to the construction of $B_2$. Let $\Sigma_2$ be a set of representatives for the isomorphism classes of simple objects in $\Bh_2$ and $\Sigma_1\subset \Sigma_2$ the subset of those in $\Bh_1$. Let $\Sigma'=\Sigma_2\ohne \Sigma_1$ be the complement. We put 
\[ B_{12}=\End\left( \bigoplus_{S\in\Sigma_1}P_{\Bh_2}(S)\right),\quad B'=\End\left( \bigoplus_{S\in\Sigma'}P_{\Bh_2}(S)\right).\]
Note that the radical of $B_{12}$ is $K_1$ and the radical of $B_{12}\times B'$ is $K_2\isom K_1\times K'$. We have a commutative diagram of unital $k$-algebras
\[\begin{xy}\xymatrix{
B_{12}\times B'\ar[r]\ar[d]& B_2\ar[r]\ar[d]&K_2\ar[d]\\
B_{12}\ar[r]  &B_1\ar[r]&K_1
}\end{xy}\]
Choose splittings $\rho':K'\to B'$ and $\rho_{12}:K_1\to B_{12}$.
The diagram
\[ \begin{xy}\xymatrix{
  &B_{12}\ar[d]\\
  K_1\ar[r]^{\rho_1}\ar[ru]^{\rho_{12}}&B_1
}\end{xy}\]
defines another splitting of $B_1\to K_1$. It agrees with $\rho_1$ up to conjugation with $(1-w_1)$ for $w_1\in\rr_{B_1}$. Let
$w_2\in\rr_{B_{12}}$ be a preimage of $w_1$. We replace $\rho_{12}$ by its conjugation by $(1-w_2)$. This makes the splittings $\rho_1$ and $\rho_{12}$ compatible. Then
\[ K_2\isom K_{1}\times K'\xrightarrow{(\rho_{12},\rho')} B_{12}\times B'\to B_2\]
has the required property.

\emph{2. step:} We are given
a section of $K_1-K_1$-bimodules
\[ \rr_{B_1}/\rr_{B_1}^2\to \rr_{B_1}.\]
By semi-simplicity, it can be lifted to a section of $K_2-K_2$-bimodules
\[\rr_{B_2}/\rr_{B_2}^2\to \rr_{B_2}.\]
In other words, the diagram
\[\begin{xy}\xymatrix{
\rr_{B_2}/\rr^2_{B_2}\ar[r]\ar@{->>}[d] &\rr_{B_2}\ar@{->>}[d]\\
\rr_{B_1}/\rr^2_{B_1}\ar[r]&\rr_{B_1}
}\end{xy}\]
commutes.

\emph{3. step (choice of $\ci_2$):} 
The compatible choices in Step 1 and 2 define
compatible surjections
\[ k\cs_i\twoheadrightarrow B_i.\]
It induces a map on kernels $\Ih_2\to\Ih_1$.

\emph{4. step (choice of generators of $\ci_2$):}
We are given a splitting of
\[  \Ih_1\to \Ih_1/(\Ih_1\rr_{k\cs_1}+ \rr_{k\cs_1}\Ih_1)\]
in the categeory of $K_1-K_1$-bimodules.
By semi-simplicity the choice for $\ci_2$ can by made in a compatible way with a given choice for $\ci_1$.
\end{proof}

\begin{lemma}\label{lem:choice2}
Let $\Ch$ be as in Set-Up~\ref{setup:artin}, $\Bh_1,\Bh_2\subset\Ch$ a strongly finitary full subcategories closed under subquotients. Let
$\Bh_0=\Bh_1\cap\Bh_2$ and $\tilde{\Bh}$ the smallest full abelian category containing $\Bh_1$ and $\Bh_2$.

Given $\Ih_j\subset k\cs_j$ and an inclusion of $D\Ext^2_{\Bh_j}(K_j,K_j)\subset \Ih_j$ for $j=0,1,2$ compatible with respect to the inclusion $\Bh_0\subset\Bh_1,\Bh_2$, then there are canonical inclusions $\tilde{\Ih}\subset k\cs_{\tilde{B}}$ and
$D\Ext^2_{\tilde{\Bh}}(\tilde{K},\tilde{K})\subset\tilde{\Ih}$ compatible with the inclusions for $\Bh_1$ and $\Bh_2$.
\end{lemma}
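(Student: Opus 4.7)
Plan: I would follow the four-step structure of the proof of Lemma~\ref{lem:choice1} and at each step construct the required data on $\tilde\Bh$ by gluing the given data on $\Bh_1$ and $\Bh_2$ along their common restriction to $\Bh_0$. The key structural observation is that the simple objects of $\tilde\Bh$ are exactly the union of those of $\Bh_1$ and $\Bh_2$, with intersection indexed by the simple objects of $\Bh_0$. This yields an isomorphism $\tilde K \isom K_1 \times_{K_0} K_2$ of semi-simple $k$-algebras. Consequently, the canonical map $\tilde B \to B_1 \times_{B_0} B_2$ induced by Lemma~\ref{lem:canonical} is an isomorphism on semi-simple quotients, hence surjective by Nakayama with nilpotent kernel contained in $\rad(\tilde B)$.

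Exploiting this pullback picture, the compatible sections $\sigma_i\colon K_i \to B_i$ assemble into a single section $\tilde K \to B_1 \times_{B_0} B_2$. Lifting it through the nilpotent-kernel surjection $\tilde B \twoheadrightarrow B_1 \times_{B_0} B_2$ (via the standard idempotent-lifting argument) yields a Wedderburn--Malcev section $\tilde\sigma\colon \tilde K \to \tilde B$ whose projections to $B_1$ and $B_2$ recover $\sigma_1$ and $\sigma_2$. The same gluing scheme handles the second step: the given $K_i$-bimodule splittings of $\rad(B_i)/\rad(B_i)^2 \to \rad(B_i)$ combine, via the pullback, to a splitting on the part of $\rad(\tilde B)/\rad(\tilde B)^2$ that controls the maps to the $B_i$, and by semi-simplicity of $\tilde K \otimes_k \tilde K^{\op}$ this extends to a full bimodule splitting. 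The kernel $\tilde\Ih$ of the resulting surjection $k\cs_{\tilde B} \twoheadrightarrow \tilde B$ maps by functoriality onto both $\Ih_1$ and $\Ih_2$. Step~4, namely the $\tilde K$-bimodule section of $\tilde\Ih \twoheadrightarrow D\Ext^2_{\tilde\Bh}(\tilde K,\tilde K)$, is treated analogously: glue the given bimodule splittings along $\Bh_0$ using the pullback structure and extend by semi-simplicity.

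The main obstacle is the canonicity claim. The lifts $\tilde\sigma$ and the various bimodule sections inside $\tilde B$ are genuinely non-canonical, being only determined modulo inner automorphism by $1+\rad$ and modulo bimodule complements. What must be verified is that any two valid gluings yield the \emph{same} two-sided ideal $\tilde\Ih\subset k\cs_{\tilde B}$ and the \emph{same} inclusion of $D\Ext^2_{\tilde\Bh}(\tilde K,\tilde K)$---equivalently, that the ambiguity of the construction lies in the kernels of the projections $k\cs_{\tilde B} \twoheadrightarrow k\cs_{\Bh_i}$ simultaneously for $i=1,2$. This reduces to checking the functoriality of the gluing under both projections, which follows from the universal property of $\tilde K = K_1 \times_{K_0} K_2$ together with the fact that $D\Ext^2_{\tilde\Bh}(\tilde K,\tilde K)$ decomposes along pairs of simple objects and, by functoriality of $\Ext^2$, its projections to $D\Ext^2_{\Bh_i}(K_i,K_i)$ are compatible with the given inclusions.
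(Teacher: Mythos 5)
Your proposal misses the central structural fact that makes the lemma work, and as a result it does not actually establish the claimed \emph{canonicity}. The paper's proof never performs any Wedderburn--Malcev lifting, idempotent lifting, or choice of bimodule splittings inside this lemma (those choices are the content of Lemma~\ref{lem:choice1}, where canonicity is \emph{not} asserted). The key observation in the paper is that the fibre-product description propagates from the simples all the way up to the path algebras: for simple $S,T$ one has $\Ext^1_{\tilde\Bh}(S,T)=\Ext^1_{\Bh_1}(S,T)+\Ext^1_{\Bh_2}(S,T)$ inside $\Ext^1_{\Ch}(S,T)$, hence $D\Ext^1_{\tilde\Bh}(S,T)\cong D\Ext^1_{\Bh_1}(S,T)\times_{D\Ext^1_{\Bh_0}(S,T)}D\Ext^1_{\Bh_2}(S,T)$, and therefore
\[
k\cs_{\tilde\Bh}\;\cong\; k\cs_{\Bh_1}\times_{k\cs_{\Bh_0}} k\cs_{\Bh_2}.
\]
Given this, $\tilde\Ih$ is simply defined as $J_1\cap J_2$, where $J_j$ is the kernel of the composite $k\cs_{\tilde\Bh}\to k\cs_{\Bh_j}\to B_j$; one checks $k\cs_{\tilde\Bh}/\tilde\Ih\cong\tilde B$ since it is the minimal quotient mapping to both $B_1$ and $B_2$. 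The inclusion of $D\Ext^2_{\tilde\Bh}(\tilde K,\tilde K)$ is then obtained by factoring through the chain $D\Ext^2_{\tilde\Bh}(\tilde K,\tilde K)\to D\Ext^2_{\Bh_1}\times_{D\Ext^2_{\Bh_0}}D\Ext^2_{\Bh_2}\to \Ih_1\times_{\Ih_0}\Ih_2\to k\cs_{\Bh_1}\times_{k\cs_{\Bh_0}}k\cs_{\Bh_2}\cong k\cs_{\tilde\Bh}$, whose image lands in $\tilde\Ih$. No choices are made, so canonicity is automatic.

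Your approach, by contrast, starts re-running the four-step machinery of Lemma~\ref{lem:choice1}, introduces non-canonical lifts and splittings, and then at the end acknowledges that canonicity is ``the main obstacle'' but only gestures at ``functoriality'' and ``the universal property'' without giving an actual argument that the resulting $\tilde\Ih$ and the map from $D\Ext^2$ are independent of the choices. That gap is real: without the pullback description of $k\cs_{\tilde\Bh}$ there is no reason for two different Wedderburn--Malcev sections and bimodule splittings to produce the same two-sided ideal. Your intermediate claim that $\tilde B\to B_1\times_{B_0}B_2$ is surjective with nilpotent kernel is also not justified (the semi-simple quotient of a fibre product of algebras is not automatically the fibre product of the semi-simple quotients), and in any case it is not needed once one works at the level of path algebras as the paper does.
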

\begin{proof}
Let $B_0,B_1,B_2,\tilde{B}$ be the corresponding basic algebras with semi-simple quotients $K_0, K_1,K_2,\tilde{K}$. A simple object is in $\tilde{\Bh}$ if and only if it is in $\Bh_1$ or $\Bh_2$. The endomorphism algebras agree because they are computed in $\Ch$. Hence
\[ \tilde{K}\isom K_1\times_{K_0} K_2.\]
For simple objects $S,T$, we have
\[ \Ext^1_{\tilde{\Bh}}(S,T))=\Ext^1_{\Bh_1}(S,T)+\Ext^1_{\Bh_2}(S,T)\subset \Ext^1_{\Ch}(S,T)\]
where we interpret $\Ext^1_{\Bh_j}(S,T)=0$ is $S$ or $T$ is not in $\Bh_j$. Hence
\[ \Ext^1_{\tilde{\Bh}}(S,T)\isom (\Ext^1_{\Bh_1}(S,T)\oplus\Ext^1_{\Bh_2}(S,T))/\Ext^1_{\Bh_0}(S,t)\]
and 
\[  D\Ext^1_{\tilde{\Bh}}(S,T)\isom D\Ext^1_{\Bh_1}(S,T)\times_{D\Ext^1_{\Bh_0}(S,T)}D\Ext^1_{\Bh_2}(S,T).\]
This implies
\[ k\cs_{\tilde{\Bh}}\isom k\cs_{\Bh_1}\times_{k\cs_{\Bh_0}} k\cs_{\Bh_2}.\]
For $j=1,2$ let $J_j$ be the kernel of 
\[ k\cs_{\tilde{\Bh}}\to k\cs_{\Bh_j}\to B_j.\]
We put $\tilde{I}=J_1\cap J_2$. Then $k\cs_{\tilde{B}}/\tilde{I}$ maps
to $B_1$ and to $B_2$ and is the minimal quotient of the path algebra with this property. Hence
\[ k\cs_{\tilde{B}}/\tilde{I}\isom\tilde{B}.\]
By construction $\tilde{I}$ maps to the ideals $I_j\subset k\cs_{\Bh_j}$.
This settles the compatibility issue for the ideals.

We are given compatible maps
\[ D\Ext^2_{\Bh_j}(K_j,K_j)\to I_j\]
for $j=0,1,2$.
They induce
\begin{multline*}
 D\Ext^2_{\tilde{\Bh}}(\tilde{K},\tilde{K})\to
D\Ext^2_{\Bh_1}(K_1,K_1)\times_{D\Ext^2_{\Bh_0}(K_0,K_0)}D\Ext^2_{\Bh_2}(K_2,K_2)\\
\to I_1\times_{I_0}I_2\to k\cs_{\Bh_1}\times_{k\cs_{\Bh_0}} k\cs_{\Bh_2}
\to k\cs_{\tilde{\Bh}}
\end{multline*}
which takes image in $\tilde{I}$.
\end{proof}

\begin{proof}[Proof of Proposition~\ref{prop:choice}.]
Since $\Ch$ was assumed essentially small, its full abelian subcategories closed under subquotients (and hence isomorphisms) form a set.

We use Zorn's Lemma. We consider the set of triples 
\[ 
\left(\Ch', \{I_\Bh\subset k\cs_\Bh\}_{\Bh\subset\Ch'}, \{D\Ext^2_{\Bh}(K_\Bh,K_\Bh)\subset\Ih_\Bh\}_{\Bh\subset\Ch'}\right)
\]
consisting of a full abelian subcategory $\Ch'\subset\Ch$ closed under subquotients and a compatible system of choices of $\Ih_\Bh\subset k\cs_\Bh$ such that
$\Bh\isom \modules{k\cs_\Bh/\Ih_\Bh}$
and $D\Ext^2_{\Bh}(K_\Bh,K_\Bh)\subset \Ih_\Bh$ generating the ideal for all strongly finitary full subcategories $\Bh\subset\Ch'$ closed under subquotients.  The set is non-empty because a choice for a strongly finitary $\Ch'$ also induces a choice for all its subcategories.
 
We define a partial order by inclusion of subcategories and compatibility of the choices. Given a totally ordered subset, we obtain an upper bound by taking the union of the subcatgories. By Zorn's Lemma there is a maximal triple. Let $\Ch'$ be the category in this maximal triple. 

If $\Ch'$ is not equal to $\Ch$, then there is
an object $M\notin\Ch'$. Let $\Bh_2=\langle M\rangle$ and
$\Bh_1=\Bh_2\cap\Ch'$. Both are strongly finitary. As $\Bh_1\subset\Ch'$, we have a prefered choice of data. By Lemma~\ref{lem:choice1}, we find a compatible choice of data for $\Bh_2$. 

Let $\Ch''$ be the full abelian subcategory of $\Ch$ generated by $\Ch'$ and $\Bh_2$. We claim that the data extends to all strongly finitary subcategories $\Bh$ of $\Ch''$. Indeed, $\Bh$ is generated by $\Bh\cap \Bh_2$ and
$\Bh\cap \Ch''$, where we have compatible choices of data. By Lemma~\ref{lem:choice2} the data extend canonically to $\Bh$.

This would contradict maximality of $\Ch'$, hence $M$ cannot exist.
\end{proof}

\end{appendix}

\bibliographystyle{alpha}
\bibliography{periods,mk}


\end{document}